\tikzset{>=to}
\newcommand{\ie}{i.e.\ }
\newcommand{\AIM}[1]{\noindent #1 \smallskip}
\newtheorem{theorem}{Theorem}[section]
\newtheorem*{theorem*}{Theorem}
  \newaliascnt{conjecture}{theorem}
  \newaliascnt{proposition}{theorem}
  \newtheorem{proposition}[proposition]{Proposition}
  \newaliascnt{lemma}{theorem}
  \newtheorem{lemma}[lemma]{Lemma}
  \newaliascnt{corollary}{theorem}
  \newtheorem{corollary}[corollary]{Corollary}
\theoremstyle{definition}
  \newaliascnt{definition}{theorem}
  \newtheorem{definition}[definition]{Definition}
  \newaliascnt{remark}{theorem}
  \newtheorem{remark}[remark]{Remark}
  \newaliascnt{question}{theorem}
  \newtheorem*{example*}{Example}
  \newaliascnt{example}{theorem}
  \newtheorem{example}[example]{Example}
  \newaliascnt{examples}{theorem}
  \newtheorem{examples}[examples]{Examples}
\newcommand{\kk}{\mathbbm k}
\newcommand{\ZZ}{\mathbb Z}
\newcommand{\CC}{\mathbb C}
\newcommand{\QQ}{\mathbb Q}
\newcommand{\PP}{\mathbb P}
\newcommand{\cO}{\mathcal O}
\newcommand{\cT}{\mathcal T}
\newcommand{\cN}{\mathcal N}
\newcommand{\cE}{\mathcal E}
\newcommand{\cF}{\mathcal F}
\newcommand{\cG}{\mathcal G}
\newcommand{\cH}{\mathcal H} 
\DeclareMathOperator{\Spec}{Spec}
\DeclareMathOperator{\rk}{rk}
\newcommand{\cA}{\mathcal A}
\newcommand{\cB}{\mathcal B}
\newcommand{\cC}{\mathcal C}
\newcommand{\cD}{\mathcal D}
\newcommand{\Db}{\mathcal D^b}
\DeclareMathOperator{\Aut}{Aut}
\DeclareMathOperator{\Isom}{O}
\newcommand{\cI}{\mathcal I}
\newcommand{\cP}{\mathcal P}
\newcommand{\R}{\mathsf R}
\let\L\relax
\newcommand{\L}{\mathsf L}
\newcommand{\T}{\mathsf T}
\newcommand{\pairing}[1]{\langle #1 \rangle} 
\newcommand{\blank}{-} 
\newcommand*\cpx{{\mathpalette\cpx@{.666}}}
\newcommand*\cpx@[2]{\mathbin{\vcenter{\hbox{\scalebox{#2}{$\m@th#1\bullet$}}}}}
\newcommand{\ltensor}{\stackrel{\L}{\otimes}}
\newcommand{\Ltensor}{\stackrel{\smash{\L}}{\otimes}}
\DeclareMathOperator{\Cone}{C}
\DeclareMathOperator{\Com}{Com}
\DeclareMathOperator{\Hot}{Hot}
\DeclareMathOperator{\qis}{\mathcal{qis}}
\let\mod\relax
\mathchardef\hh="2D 
\DeclareMathOperator{\mod}{mod}
\DeclareMathOperator{\Mod}{Mod}
\DeclareMathOperator{\hmod}{\hh\mod}
\DeclareMathOperator{\hMod}{\hh\Mod}
\DeclareMathOperator{\coh}{coh}
\DeclareMathOperator{\qcoh}{Qcoh}
\DeclareMathOperator{\im}{im}
\DeclareMathOperator{\coker}{coker}
\DeclareMathOperator{\Hom}{Hom}
\DeclareMathOperator{\SHom}{\mathcal{Hom}}
\DeclareMathOperator{\Ext}{Ext}
\DeclareMathOperator{\Tor}{Tor}
\DeclareMathOperator{\id}{id}
\newcommand{\osemiplus}{ 
\mathbin{%
\raisebox{-0.25ex}{%
  \begin{tikzpicture}
    \draw[line width=0.05ex] (0,0) circle (0.125);
    \draw[line width=0.05ex] (0,0.125) to (0,-0.125);
    \draw[line width=0.05ex] (-0.125,0) to (0,0);
  \end{tikzpicture}}%
}%
}
\newcommand{\xto}{\xrightarrow}
\newcommand{\into}{\hookrightarrow}
\newcommand{\onto}{\twoheadrightarrow}
\newcommand{\isom}{\mathbin{\ooalign{\hss\raisebox{0.7ex}{$\scriptstyle\sim$}\hss\cr$\longrightarrow$}}}
\newcommand{\isomtext}{\mathbin{\ooalign{\hss\raisebox{0.9ex}{$\scriptstyle\sim\,$}\hss\cr$\rightarrow$}}}
\newcommand{\vimende}{\color{white} \\ \color{gray} 
\vfill\noindent\texttt{:wq}}
\title[Introduction to derived categories]{Introduction to derived categories of coherent sheaves}
\author{Andreas Hochenegger}
\address{Dipartimento di Matematica ``Federigo Enriques'', Universit\`a degli Studi di Milano, Italia}
\email{andreas.hochenegger@unimi.it \vimende}
\begin{document}

\maketitle

\begin{abstract}
In these notes, an introduction to derived categories and derived functors is given.
The main focus is the bounded derived category of coherent sheaves on a smooth projective variety.
\end{abstract}

\addtocontents{toc}{\protect\setcounter{tocdepth}{1}} 
\tableofcontents

\section{Introduction}

\noindent
One way to get your hands on coherent sheaves is by short exact sequences.
To name three important ones:
\begin{itemize}
\item 
\makebox[6.5cm][l]{$\displaystyle 0 \to \cO_{\PP^n} \to \cO_{\PP^n}(1)^{\oplus(n+1)} \to \cT_{\PP^n} \to 0$}
\emph{Euler sequence} 
\item 
\makebox[5.5cm][l]{$\displaystyle 0 \to \cI_{Y|X} \to \cO_X \to \cO_Y \to 0$}
\emph{Ideal sheaf sequence}
\item 
\makebox[5.5cm][l]{$\displaystyle 0 \to \cT_Y \to \cT_X|_Y \to \cN_{Y|X} \to 0$}
\emph{Normal sheaf sequence}
\end{itemize}
where $Y \subset X$ is a closed embedding.

Such sequences are usually the starting point for computations.
But by applying any meaningful operation to such a sequence one will almost inevitably lose the exactness on the left or right end.
Examples for such operations are
\\
\begin{minipage}[t]{0.45\linewidth}
\begin{itemize}
\item $\SHom(\cF,\blank)$, $\Hom(\cF,\blank)$
\item $f_*$, $\Gamma(X,\blank)$
\end{itemize}
\end{minipage}
\begin{minipage}[t]{0.45\linewidth}
\begin{itemize}
\item $\cF \otimes \blank$
\item $f^*$
\end{itemize}
\end{minipage}
\\
where $\cF$ is a coherent sheaf, and $f \colon X \to Y$ a morphism.
Another issue is that the projection formula and flat base change work only for specific classes of coherent sheaves such as for locally free sheaves.
That exactness gets lost, should not be seen as a failure but an indication that there is something more to say.

\begin{example*}
Let $C$ and $C'$ be two rational curves on a smooth projective surface $X$.
Applying $\Hom(\blank,\cO_C)$ to the ideal sheaf sequence of $C'$ yields
\[
0 \to \Hom(\cO_{C'},\cO_C) \to H^0(\cO_C) \to H^0(\cO_C(C'))
\]
This sequence is a short exact sequence if and only if
\begin{itemize}
\item $C$ and $C'$ are disjoint, then $\Hom(\cO_{C'},\cO_C) =0$ and $\cO_C \cong \cO_C(C')$; or
\item $C = C'$ and $H^0(\cO_C(C))$ vanishes, as $\Hom(\cO_{C},\cO_C) \to H^0(\cO_C)$ is an isomorphism.
\end{itemize}
These are quite special situations (note that the second case implies that $C^2<0$).
In particular, if $C$ and $C'$ intersect, this sequence can to be continued with $\Ext$-groups. 
The intersection number can be easily computed using the Euler characteristic:
\[
\begin{split}
C'.C & = -\chi(\cO_{C'},\cO_C) = 
-\dim \Hom(\cO_{C'},\cO_C) + \dim \Ext^1(\cO_{C'},\cO_C).
\end{split}
\]
\end{example*}

In order to deal with such examples,
homological algebra proposes to replace sheaves by adapted resolutions and the derived category of sheaves will become the proper framework for such computations. 

\subsubsection*{Aim}

These notes serve as a companion to the lecture notes \cite{Macri-Stellari} and give the necessary background on derived categories.
The motivating question is how to change (or better: derive) a functor between abelian categories in order to keep exactness.
We hope to convince the reader that this question leads quite inevitably to the notion of a derived category and derived functors.

In the first part, we give the general construction of derived categories of an abelian category and derived functors.
The motivating question leads to the notion of adapted resolutions and quasi-isomorphisms in \autoref{sec:adapted}. As an intermediate step we arrive at the notion of a homotopy category in \autoref{sec:homotopy}.
In \autoref{sec:derived}, the derived category is constructed and its triangulated structure discussed.
Finally in \autoref{sec:functors}, we will see that derived functors become exact on the derived level.

In the second part, we focus on the derived category of sheaves, especially on the construction of derived functors.
There we deal with left-exact functors like $\Hom$ and push-forward in \autoref{sec:deriveleft}, and then with right-exact functors like $\otimes$ and pull-back in \autoref{sec:deriveright}.
Moreover, we give some compatibilities among these functors in \autoref{sec:compatibilities}.
Finally, we discuss a bit the important notion of Fourier-Mukai transforms in \autoref{sec:fourier}.

The third section can be seen as an application of the theory of Fourier-Mukai transforms. Moreover, it should pave the way for \cite{Macri-Stellari}. There we present some comparatively recent results on the auto-equivalences of the derived category of a complex projective K3 surface.

For full details, we refer to the wonderful books \cite{Gelfand-Manin} and \cite{Huybrechts} which these notes follow to quite some extent.
But we also want to mention the books \cite{Hartshorne-residues}, \cite{Kashiwara-Schapira} and \cite{Lipman} which were very helpful when compiling these notes.
In this text, we do not give proper references, because all results are nowadays pretty standard and can be found in any of the above mentioned sources. The only exception is the last section were more recent results are presented and therefore some references given.
We want to stress that most of the proofs below are just indications of the main ideas, and usually borrowed from one of the above mentioned books.
We hope that these indications give the novice a good feeling about what is going on, and ideally leave such a reader well-prepared for a closer study using a textbook.

\subsubsection*{Prerequisites}

We assume that the reader has a background in algebraic geometry and is acquainted with basic notions from homological algebra.

\subsubsection*{Conventions}
With $\kk$, we denote a field which is not necessarily algebraically closed or of characteristic zero.
When we speak of categories, we implicitly assume that they are $\kk$-linear (even though this is not strictly necessary for most of the abstract theory).
By a variety we mean an integral separated scheme of finite type over $\kk$.

\subsection*{Acknowledgements}
The author thanks Klaus Altmann, Andreas Krug, Ciaran Meachan, David Ploog and Paolo Stellari for comments and suggestions.

\section{From abelian to derived categories}

\subsection{Adapted resolutions}
\label{sec:adapted}

\AIM{
In this section, we will introduce the central notion of quasi-isomorphism and speak about adapted resolutions. Moreover, we will give a definition of the derived category by a universal property.
}

We fix some notation.
Let $\cA$ be an abelian category, \ie we can speak of short exact sequences.
We denote by $\Com(\cA)$ the \emph{category of (cochain) complexes} in $\cA$, \ie its objects are sequences
\[
C^\cpx 
\colon \quad 
\cdots \to C^{i-1} \xto{d^{i-1}} C^i \xto{d^i} C^{i+1} \to \cdots
\]
with $C^i \in \cA$ and $d^{i} \circ d^{i-1} = 0$ for all $i \in \ZZ$, and morphisms are \emph{maps of complexes}, \ie 
\[
\begin{tikzcd}
C^\cpx \ar[d, "f^\cpx"']
& \cdots \ar[r] & C^i \ar[r, "d^i"] \ar[d, "f^i"'] \ar[dr, phantom, "\displaystyle \circlearrowleft"]
& C^{i+1} \ar[r] \ar[d, "f^{i+1}"] & \cdots\\
D^\cpx 
& \cdots \ar[r] & D^i \ar[r, "d^i"]              & D^{i+1} \ar[r] & \cdots
\end{tikzcd}
\]
With $\Com^+(\cA)$, $\Com^-(A)$ and $\Com^b(\cA)$ we denote the full subcategory of bounded below, bounded above, and bounded complexes, respectively. For example, $C^\cpx \in \Com^+(\cA)$ if $C^i = 0$ for $i \ll 0$.

By slight abuse of notation, given some class of objects $\cI$ in $\cA$,
we will write $\Com(\cI)$ for the (full) subcategory consisting of those complexes in $\Com(\cA)$ which are sequences of objects in $\cI$.

Due to $d^i \circ d^{i-1} = 0$ or, equivalently, $\im d^{i-1} \subseteq \ker d^i$, we can take \emph{cohomology} of any $C^\cpx \in \Com(\cA)$, \ie
\[
\cH^i(C^\cpx) = \frac{\ker d^i}{\im d^{i-1}}.
\]
Note that we can consider $\cH^\cpx(C^\cpx)$ when equipped with the zero-differential again as an element of $\Com(\cA)$.
We say that $C^\cpx$ is \emph{acyclic} (or \emph{exact}) if it has no cohomology, \ie $\cH^\cpx(C^\cpx) = 0$.

Moreover, a map $f^\cpx \colon C^\cpx \to D^\cpx$ of complexes induces a map
\[
\cH^\cpx(f^\cpx) \colon \cH^\cpx(C^\cpx) \to \cH^\cpx(D^\cpx).
\]
We say that $f^\cpx$ is a \emph{quasi-isomorphism} if $\cH^\cpx(f^\cpx)$ is an isomorphism.

\begin{definition}
Let $F \colon \cA \to \cB$ be a left-exact functor between abelian categories.
Let $\cI_F$ be a class of objects in $\cA$.
We say that $\cI_F$ is \emph{$F$-adapted} if
\begin{itemize}
\item $F(I^\cpx)$ is acyclic for any acyclic complex $I^\cpx \in \Com^+(\cI_F)$;
\item for any $A \in \cA$ there is an injection $A \into I$ with $I \in \cI_F$.
\end{itemize}
\end{definition}

The first property says in particular that $F$ preserves exactness of short exact sequences of objects in $\cI_F$.
The second property ensures that we can replace any $A$ by an adapted resolution, as the following lemma shows.

\begin{lemma}
\label{lem:injective-resolution}
Let $F \colon \cA \to \cB$ be a left-exact functor between abelian categories,
and let $\cI_F$ be an $F$-adapted class.
Then for any $A \in \cA$, there is a complex $I^\cpx \in \Com^+(\cI_F)$ such that
\[
\begin{tikzcd}
A \ar[d, "f"'] 
& \cdots \ar[r] & 0 \ar[r] & A \ar[r] \ar[d, "f"] & 0 \ar[r] & \cdots \\
I^\cpx
& \cdots \ar[r] & 0 \ar[r] & I^0 \ar[r] & I^1 \ar[r] & \cdots
\end{tikzcd}
\]
is a quasi-isomorphism.
We call $I^\cpx$ an \emph{$F$-adapted resolution} of $A$.
\end{lemma}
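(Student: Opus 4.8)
The plan is to build $I^\cpx$ degree by degree, at each stage embedding the ``leftover'' cokernel into an object of the adapted class. Concretely, I would set $A^0 = A$ and, using the second defining property of an $F$-adapted class, choose a monomorphism $\iota^0\colon A^0 \into I^0$ with $I^0 \in \cI_F$; write $\pi^1\colon I^0 \onto A^1$ for its cokernel. Inductively, given $A^i$, choose a monomorphism $\iota^i\colon A^i \into I^i$ with $I^i \in \cI_F$ and let $\pi^{i+1}\colon I^i \onto A^{i+1} = \coker(\iota^i)$ be the cokernel. I then set $I^i = 0$ for $i<0$, define $d^i = \iota^{i+1}\circ\pi^{i+1}\colon I^i \to I^{i+1}$ for $i \ge 0$, and $f = \iota^0\colon A \to I^0$. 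Note that only the second axiom of $F$-adaptedness enters here; the first axiom is what later makes the derived functor independent of the chosen resolution.

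Next I would check that this data really is a map of complexes $f^\cpx\colon A \to I^\cpx$, with $A$ placed in degree $0$. Since $\pi^{i+1}$ is the cokernel of $\iota^i$ we have $\pi^{i+1}\circ\iota^i = 0$, whence $d^i\circ d^{i-1} = \iota^{i+1}\pi^{i+1}\iota^i\pi^i = 0$ and likewise $d^0\circ f = \iota^1\pi^1\iota^0 = 0$. By construction every $I^i$ lies in $\cI_F$ and $I^i = 0$ for $i<0$, so $I^\cpx \in \Com^+(\cI_F)$.

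Finally I would verify that $f^\cpx$ is a quasi-isomorphism, which amounts to showing that the augmented complex $0 \to A \xto{f} I^0 \xto{d^0} I^1 \to \cdots$ is acyclic. Exactness at $A$ is just the injectivity of $\iota^0$. For $i \ge 1$, since $\iota^{i+1}$ is monic we get $\ker d^i = \ker\pi^{i+1} = \im\iota^i$, while $\im d^{i-1} = \im(\iota^i\pi^i) = \im\iota^i$ because $\pi^i$ is epic; hence $\ker d^i = \im d^{i-1}$. The same computation at $I^0$ gives $\ker d^0 = \ker\pi^1 = \im\iota^0 = \im f$. Therefore $\cH^i(I^\cpx) = 0$ for $i \ne 0$, while $\cH^0(I^\cpx) = \ker d^0 = \im f$, and since $\iota^0$ is monic the induced map $A \to \cH^0(I^\cpx)$ is an isomorphism; thus $\cH^\cpx(f^\cpx)$ is an isomorphism, as required.

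I do not expect a genuine obstacle: the construction is the standard inductive one and the verifications are elementary. The only point demanding a little care is that $\cA$ is an arbitrary abelian category, so the manipulations of kernels, images and cokernels above must be read through the relevant universal properties (or, for those who prefer element-chasing, after passing to a small abelian subcategory and invoking the Freyd--Mitchell embedding). One should also bear in mind that $I^\cpx$ depends on the chosen embeddings $\iota^i$; its uniqueness up to homotopy is a separate statement and is not needed for the lemma in this form.
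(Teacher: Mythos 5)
Your construction is exactly the one the paper sketches (embed $A$ into $I^0$, then inductively embed each successive cokernel into an object of $\cI_F$ and take $d^i$ to be the composite $I^i \onto \coker \into I^{i+1}$), and your verification that the augmented complex is exact is correct. The paper stops at indicating the construction, so you have simply supplied the routine checks it omits; there is nothing to object to.
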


\begin{proof}
We only indicate how $I^\cpx$ can be constructed.
By the second property of an $F$-adapted class, there is an injection
$f \colon A \into I^0$ for some $I^0 \in \cI_F$.
Now continue inductively, by choosing $I^{i+1}$ to contain the cokernel of the previous map, and setting $d^i \colon I^i \to I^{i+1}$ to be the composition $I^i \onto \coker \into I^{i+1}$.
\end{proof}

\begin{remark}
\label{rem:replace-complex}
The above lemma can be generalised to complexes, \ie
for any $A^\cpx \in \Com^+(\cA)$ there is an adapted $I^\cpx \in \Com^+(\cI_F)$ and a quasi-isomorphism $f^\cpx \colon A^\cpx \to I^\cpx$.
\end{remark}

\begin{proposition}
If $\cA$ contains enough injective objects, \ie for any $A \in \cA$ there is an inclusion $A \into I$ with $I$ injective,
then the class $\cI_{\cA}$ of all injective objects in $\cA$ is adapted for all left-exact functors starting in $\cA$.
\end{proposition}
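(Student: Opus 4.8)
The plan is to check the two defining conditions of an $F$-adapted class for the class $\cI_\cA$ of all injective objects and an arbitrary left-exact functor $F \colon \cA \to \cB$. The second condition --- every $A \in \cA$ embeds into some object of $\cI_\cA$ --- is exactly the hypothesis that $\cA$ has enough injectives, so all the content is in the first condition: if $I^\cpx \in \Com^+(\cI_\cA)$ is acyclic, then $F(I^\cpx)$ is acyclic.

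The first step is to reduce this to a splitting statement about $I^\cpx$ alone. Put $Z^i = \ker(d^i \colon I^i \to I^{i+1})$; acyclicity gives $Z^i = \im d^{i-1}$ as well, so $I^\cpx$ is assembled from short exact sequences
\[
0 \to Z^i \to I^i \xrightarrow{\ \pi^i\ } Z^{i+1} \to 0 ,
\]
with $d^i$ equal to the composite $I^i \xrightarrow{\pi^i} Z^{i+1} \into I^{i+1}$. I claim each $Z^i$ is injective, so that each of these sequences splits. This is where boundedness below enters: if $I^j = 0$ for $j < n$, then $Z^i = 0$ for $i \le n$, hence $Z^{n+1} \cong I^n$ is injective, and inductively, if $Z^i$ is injective the displayed sequence splits (a short exact sequence with injective kernel admits a retraction), exhibiting $Z^{i+1}$ as a direct summand of the injective object $I^i$ and therefore injective. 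One may package this as: a bounded-below acyclic complex of injectives is contractible.

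It remains to push everything through $F$. Being left-exact between abelian categories, $F$ is additive (it preserves finite products), so it carries direct sums and split short exact sequences to the same; from the splittings above we get split exact sequences $0 \to F(Z^i) \to F(I^i) \xrightarrow{F(\pi^i)} F(Z^{i+1}) \to 0$. Thus $F(d^i) = \big(F(Z^{i+1}) \into F(I^{i+1})\big) \circ F(\pi^i)$ has kernel $\ker F(\pi^i)$, namely the image of $F(Z^i)$ in $F(I^i)$; and $F(d^{i-1})$ factors as $F(I^{i-1}) \xrightarrow{F(\pi^{i-1})} F(Z^i) \into F(I^i)$ with the first arrow a split epimorphism, so it has this very same image $F(Z^i) \subseteq F(I^i)$. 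Hence $\cH^i(F(I^\cpx)) = 0$ for all $i$, which is the first condition. Via the contractibility reformulation one skips this bookkeeping: an additive functor sends a contracting homotopy of $I^\cpx$ to one of $F(I^\cpx)$.

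The only genuinely non-formal ingredient is the injectivity of the $Z^i$; everything after it is formal manipulation with additive functors. The point to be careful about is that this ingredient really needs $I^\cpx$ to be bounded below --- it can fail for unbounded acyclic complexes of injectives --- together with the two standard facts that a short exact sequence with injective kernel splits and that a direct summand of an injective object is injective.
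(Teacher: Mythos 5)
Your proof is correct and follows essentially the same route as the paper's sketch: breaking the acyclic bounded-below complex into short exact sequences $0 \to \ker(d^i) \to I^i \to \im(d^i) \to 0$ and invoking the two standard facts that a short exact sequence with injective kernel splits and that the resulting quotient of injectives is again injective. You have merely filled in the induction (where boundedness below supplies the base case) and the bookkeeping after applying the additive functor $F$, which the paper leaves implicit.
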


\begin{proof}
This question can be reduced to short exact sequences, by breaking up $I^\cpx$ into $0 \to \ker(d^i) \to I^i \to \im(d^i) \to 0$.
Now the statement can be shown using two standard facts about injective objects:
\begin{itemize}
\item Any short exact sequence $0 \to I \to A \to B \to 0$ in $\cA$ with $I$ injective splits. In particular, its image under $F$ is still exact.
\item For a short exact sequence $0 \to I' \to I \to A \to 0$ in $\cA$ with $I,I'$ injective, also $A$ is injective. \qedhere
\end{itemize}
\end{proof}

\begin{remark}
We have dealt here only with left-exact functors, but there is a dual story.
For a right-exact functor $F \colon \cA \to \cB$, an $F$-adapted class $\cP_F$ should satisfy
\begin{itemize}
\item $F(P^\cpx)$ is acyclic for any acyclic complex $P^\cpx \in \Com^-(\cP_F)$;
\item for any $A \in \cA$ there is an surjection $P \onto A$ with $P \in \cP_F$.
\end{itemize}
Moreover, we get an $F$-adapted resolution $P^\cpx \to A$ in $\Com^-(\cP_F)$.
Finally, if there are enough projective objects, the class of projective objects is adapted for all right-exact functors.
\end{remark}

The discussion of this section shows, that we want to identify quasi-isomorphic complexes, as such an identification allows us to pass from an object to an adapted resolution.
This aim is summarised in the following definition.

\begin{definition}
Let $\cA$ be an abelian category.
A category $\cD$ together with a functor $Q \colon \Com(\cA) \to \cD$ is called \emph{derived category} of $\cA$ if
\begin{itemize}
\item $Q(f^\cpx)$ is an isomorphism for any quasi-isomorphism $f^\cpx$;
\item any other functor $F \colon \Com(\cA) \to \cT$ which maps quasi-isomorphisms to isomorphism factors uniquely through $\cD$:
\[
\begin{tikzcd}[column sep=small]
\mathllap{\Com}(\cA) \ar[rr, "F"] \ar[rd, "Q"'] && \cT\\
& \cD \ar[ru, dashrightarrow, "\exists!"']
\end{tikzcd}
\]
\end{itemize}
Analogously, we can define the bounded below, bounded above and bounded derived category of $\cA$.
\end{definition}

This definition by a universal property automatically yields the uniqueness up to equivalence, but we have yet to provide existence.

\subsection{The homotopy category}
\label{sec:homotopy}

\AIM{
In this section, we will introduce homotopies and show that they induce quasi-isomorphisms. In the case that there are enough injective objects (or dually, projective objects), these are all quasi-isomorphisms.
}

There is a cheap way to build a map of complexes:

\begin{lemma}
Let $C^\cpx$ and $D^\cpx$ be two complexes and $\{h^i \colon C^i \to D^{i-1}\}_i$ be a sequence of morphisms in $\cA$.
Then $f^i = h^{i+1} d^i + d^{i-1} h^i \colon C^i \to D^i$ fit together to a map of complexes:
\[
\begin{tikzcd}
C^\cpx \ar[d, "f^\cpx"'] & & \cdots \ar[r] & C^i \ar[r, "d^i"] \ar[dl, "h^i"'] \ar[d, "f^i"] & C^{i+1} \ar[r] \ar[dl, "h^{i+1}"] & \cdots \\
D^\cpx & \cdots \ar[r] & D^{i-1} \ar[r, "d^{i-1}"'] & D^i \ar[r] & \cdots
\end{tikzcd}
\]
\end{lemma}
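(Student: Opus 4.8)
The plan is to verify directly that the proposed maps $f^i \colon C^i \to D^i$ form a map of complexes, i.e.\ that $d^i_D \circ f^i = f^{i+1} \circ d^i_C$ for all $i$ (where I write $d_C$ and $d_D$ to distinguish the differentials, even though the statement suppresses them). This is a purely mechanical computation: substitute $f^i = h^{i+1} d^i + d^{i-1} h^i$ on both sides, expand, and observe that the terms cancel in pairs using only the single relation $d \circ d = 0$ available in any complex.

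Concretely, on the left I would compute
\[
d^i \circ f^i = d^i (h^{i+1} d^i + d^{i-1} h^i) = d^i h^{i+1} d^i + d^i d^{i-1} h^i = d^i h^{i+1} d^i,
\]
where the second summand vanishes because $d^i \circ d^{i-1} = 0$. On the right,
\[
f^{i+1} \circ d^i = (h^{i+2} d^{i+1} + d^i h^{i+1}) d^i = h^{i+2} d^{i+1} d^i + d^i h^{i+1} d^i = d^i h^{i+1} d^i,
\]
where now the first summand vanishes because $d^{i+1} \circ d^i = 0$. The two sides agree, so $f^\cpx$ is indeed a map of complexes. I would present this as a single short displayed computation rather than two separate ones, perhaps in an \texttt{align*} environment, since the whole point is the symmetry of the cancellation.

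There is essentially no obstacle here — the lemma is a formal identity, and the only ``care'' needed is bookkeeping with the indices (making sure, e.g., that $f^{i+1}$ is expanded as $h^{i+2} d^{i+1} + d^i h^{i+1}$ and not something shifted incorrectly). If anything, the mild subtlety is purely notational: the diagram in the statement reuses the symbol $d$ for the differentials of both $C^\cpx$ and $D^\cpx$, so in writing up the proof I would either introduce temporary names or simply trust the reader to track which $d$ lives where, as the source degrees make it unambiguous. The natural place this lemma is headed — and worth flagging in a remark afterward — is that such an $f^\cpx$ (called \emph{null-homotopic}) induces the zero map on cohomology, so that homotopic maps of complexes induce the same map on cohomology; but that is the content of the next step, not this one.
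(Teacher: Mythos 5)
Your proof is correct and is exactly the paper's argument, only spelled out in more detail: the paper likewise reduces the claim to the identity $d^i(h^{i+1}d^i + d^{i-1}h^i) = (h^{i+2}d^{i+1} + d^i h^{i+1})d^i$, which holds because $C^\cpx$ and $D^\cpx$ are complexes. Your explicit cancellation showing both sides equal $d^i h^{i+1} d^i$ is a welcome elaboration of the same computation.
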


\begin{proof}
We only have to check that
$d^i (h^{i+1} d^i + d^{i-1} h^i) = (h^{i+2} d^{i+1} + d^i h^{i+1}) d^i$, which holds as $C^\cpx$ and $D^\cpx$ are complexes.
\end{proof}

\begin{definition}
Let $f^\cpx, g^\cpx \colon C^\cpx \to D^\cpx$ be two maps of complexes.
We say that $f^\cpx$ and $g^\cpx$ are \emph{homotopic}, 
if there is a sequence of morphisms $h^i \colon C^i \to D^{i-1}$,
such that $f^i - g^i = h^{i+1} d^i + d^{i-1} h^i$ for all $i \in \ZZ$.
We write $f^\cpx \sim g^\cpx$ in this case.
\end{definition}

\begin{lemma}
Let $f^\cpx, g^\cpx \colon C^\cpx \to D^\cpx$ be two maps of complexes.
If $f^\cpx$ and $g^\cpx$ are homotopic, then the induced maps $\cH^\cpx(f^\cpx)$ and $\cH^\cpx(g^\cpx)$ are equal.
Moreover, homotopy $\sim$ defines an equivalence relation for maps of complexes.
\end{lemma}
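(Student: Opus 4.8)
The plan is to prove the two assertions in turn, using the formula $f^i - g^i = h^{i+1} d^i + d^{i-1} h^i$ directly. For the first claim, I would take a cohomology class in $\cH^i(C^\cpx)$, represented by some $c \in \ker d^i$, and compute its image under $f^i - g^i$. Since $d^i c = 0$, the first term $h^{i+1} d^i(c)$ vanishes, so $(f^i - g^i)(c) = d^{i-1}(h^i(c))$, which lies in $\im d^{i-1}$. Hence $f^i(c)$ and $g^i(c)$ differ by a coboundary and represent the same class in $\cH^i(D^\cpx)$, which is exactly the statement $\cH^i(f^\cpx) = \cH^i(g^\cpx)$. I should note that this argument is most cleanly phrased element-wise; in a general abelian category one invokes a suitable version of diagram chasing (e.g.\ the Freyd--Mitchell embedding, or working with generalized elements), but since the paper works in concrete abelian categories of sheaves and modules this is unobjectionable, and I would keep the write-up at the level of "chasing the element $c$".

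For the second claim, that $\sim$ is an equivalence relation on $\Hom_{\Com(\cA)}(C^\cpx, D^\cpx)$, I would verify the three axioms. Reflexivity: $f^\cpx \sim f^\cpx$ via the zero homotopy $h^i = 0$. Symmetry: if $f^\cpx - g^\cpx$ is realized by $\{h^i\}$, then $g^\cpx - f^\cpx$ is realized by $\{-h^i\}$, using that $d$ and the difference of maps are additive. Transitivity: if $f^\cpx - g^\cpx$ comes from $\{h^i\}$ and $g^\cpx - k^\cpx$ comes from $\{h'^i\}$, then adding the two homotopy identities shows $f^\cpx - k^\cpx$ is realized by $\{h^i + h'^i\}$. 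All three rely only on the fact that, for fixed $C^\cpx$ and $D^\cpx$, the set of maps of complexes and the set of homotopies between them are abelian groups and the assignment (homotopy) $\mapsto$ (associated null-homotopic map) is a group homomorphism — in other words, the homotopies form a subgroup. I might remark this structural fact explicitly, since it makes the equivalence-relation check a one-line consequence.

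There is no real obstacle here; the only point requiring a word of care is the element-chasing in an abstract abelian category for the first part, which I would dispatch with a parenthetical remark rather than a detour through the embedding theorem. I would present the proof compactly: one displayed computation $(f^i - g^i)(c) = h^{i+1} d^i(c) + d^{i-1} h^i(c) = d^{i-1} h^i(c)$ for the cohomology statement, then the three short verifications (zero homotopy; negate; add) for the equivalence relation.
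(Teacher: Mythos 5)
Your proof is correct. The paper in fact states this lemma without any proof at all (it is treated as standard background), so there is nothing to compare against; your argument --- the one-line cocycle computation $(f^i-g^i)(c)=d^{i-1}h^i(c)\in\im d^{i-1}$ for the first claim, and the zero/negated/summed homotopies for reflexivity, symmetry and transitivity --- is exactly the standard argument the author is implicitly invoking, and your parenthetical remark about element-chasing in a general abelian category is an appropriate level of care.
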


As a corollary we get that homotopies are a source of quasi-isomorphisms:

\begin{remark}
\label{rem:homotopies-quasiisos}
Let $f^\cpx \colon C^\cpx \to D^\cpx$ and $g^\cpx \colon D^\cpx \to C^\cpx$ be two maps of complexes such that $f^\cpx \circ g^\cpx \sim \id_{D^\cpx}$ and $g^\cpx \circ f^\cpx \sim \id_{C^\cpx}$.
Then both $f$ and $g$ are quasi-isomorphisms, as
\[
\cH^\cpx(f^\cpx) \circ \cH^\cpx(g^\cpx) = \cH^\cpx(f^\cpx \circ g^\cpx) = \cH^\cpx(\id_{D^\cpx}) = \id_{\cH^\cpx(D^\cpx)}
\]
and similarly for $\cH^\cpx(g^\cpx) \circ \cH^\cpx(f^\cpx)$.
\end{remark}

\begin{definition}
Let $\cA$ be an abelian category.
The \emph{homotopy category} $\Hot(\cA)$ of $\cA$ consists of
\begin{itemize}
\item objects: complexes of objects in $\cA$;
\item morphisms: maps of complexes modulo homotopy
\[
\Hom_{\Hot(\cA)}(C^\cpx,D^\cpx) \coloneqq \Hom_{\Com(\cA)}(C^\cpx,D^\cpx) / \sim
\]
\end{itemize}
Moreover, we can define $\Hot^+(\cA)$, $\Hot^-(\cA)$ and $\Hot^b(\cA)$ as the full subcategories of $\Hot(\cA)$ consisting
of bounded below, bounded above, and bounded complexes, respectively.
Similarly for any full additive subcategory $\cC$ of $\cA$, we can define the homotopy category $\Hot(\cC)$ (and bounded analogues) by restricting to complexes of objects in $\cC$.
\end{definition}

\subsection*{Enough injective objects}
In the case that enough injectives are present, we can say even more about the homotopy category.

\begin{proposition}
\label{prop:lifting}
Let $f\colon C \to D$ be a morphism in an abelian category $\cA$ with enough injectives.
Then the following holds
\begin{itemize}
\item For any choice of injective resolutions $C \to I^\cpx$ and $D \to J^\cpx$,
$f$ can be lifted to a map of complexes $f^\cpx \colon I^\cpx \to J^\cpx$, in particular
the following diagram commutes:
\[
\begin{tikzcd}
C \ar[r, hook] \ar[d, "f"'] & I^0 \ar[d, "f^0"] \\
D \ar[r, hook] & J^0 
\end{tikzcd}
\]
\item any two such lifts are homotopic.
\end{itemize}
\end{proposition}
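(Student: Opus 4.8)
The plan is to construct the lift $f^\cpx$ by induction on the degree, using the defining extension property of injective objects at each step, and then to prove uniqueness up to homotopy by applying the same lifting argument to the difference of two lifts.

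\emph{Existence.} I would build $f^n \colon I^n \to J^n$ inductively. For the base case, consider the composition $C \xto{f} D \into J^0$. Since $C \into I^0$ is a monomorphism and $J^0$ is injective, this map extends to a map $f^0 \colon I^0 \to J^0$ making the displayed square commute. For the inductive step, suppose $f^0, \dots, f^n$ have been constructed compatibly with the differentials. One checks that the composite $d_J^n \circ f^n \colon I^n \to J^{n+1}$ kills $\im(d_I^{n-1})$: indeed on $\im(d_I^{n-1})$ we have $d_J^n f^n d_I^{n-1} = d_J^n d_J^{n-1} f^{n-1} = 0$ by the inductive compatibility and $d_J^n d_J^{n-1}=0$. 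Hence $d_J^n f^n$ factors through $I^n / \im(d_I^{n-1})$. Because $I^\cpx$ is a resolution of $C$, it is exact in positive degrees, so $\ker(d_I^n) = \im(d_I^{n-1})$ and the induced map $I^n/\im(d_I^{n-1}) \into I^{n+1}/\im(d_I^{n-1})$ — equivalently the map $I^n/\ker(d_I^n) \isom \im(d_I^n) \into I^{n+1}$ — is a monomorphism. Using injectivity of $J^{n+1}$, the map out of $I^n/\im(d_I^{n-1})$ extends along this monomorphism to a map $f^{n+1}\colon I^{n+1}\to J^{n+1}$, and by construction $d_J^n f^n = f^{n+1} d_I^n$. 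This completes the induction.

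\emph{Uniqueness up to homotopy.} Let $f^\cpx, g^\cpx$ be two lifts of $f$ and set $\varphi^\cpx \coloneqq f^\cpx - g^\cpx$; this is a map of complexes $I^\cpx \to J^\cpx$ which lifts the zero morphism $C \to D$, \ie $\varphi^0$ restricted to the image of $C \into I^0$ is zero. I claim one can build a homotopy $h^n \colon I^n \to J^{n-1}$ with $\varphi^n = h^{n+1} d_I^n + d_J^{n-1} h^n$ by the same kind of induction. Set $h^n = 0$ for $n \le 0$. Since $\varphi^0$ vanishes on $C \into I^0$, it factors through $I^0/C \cong I^0/\ker(d_I^0) \isom \im(d_I^0)$, which embeds into $I^1$ via $d_I^0$; injectivity of $J^0$ then produces $h^1 \colon I^1 \to J^0$ with $h^1 d_I^0 = \varphi^0 = \varphi^0 - d_J^{-1}h^0$. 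Inductively, assuming $h^0,\dots,h^n$ constructed, one verifies that $\psi^n \coloneqq \varphi^n - d_J^{n-1} h^n \colon I^n \to J^n$ satisfies $\psi^n \circ d_I^{n-1} = 0$ (a direct computation using $\varphi^\cpx$ being a chain map and the inductive relation for $h^n$), hence $\psi^n$ factors through $I^n/\im(d_I^{n-1}) \isom \im(d_I^n) \into I^{n+1}$; extending along this monomorphism using injectivity of $J^n$ yields $h^{n+1}$ with $h^{n+1} d_I^n = \psi^n = \varphi^n - d_J^{n-1}h^n$, which is exactly the homotopy relation in degree $n$.

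\emph{Main obstacle.} The genuinely delicate point in both halves is the same: checking that the relevant map out of $I^n$ actually annihilates $\im(d_I^{n-1})$ and therefore descends to the subobject $\im(d_I^n) \hookrightarrow I^{n+1}$ against which one applies injectivity. This is where exactness of the resolution $I^\cpx$ in positive degrees is used in an essential way, and it is the step most prone to sign or indexing errors; everything else is a formal consequence of the universal property of injectives. I would also remark that the argument is purely about the resolutions being complexes of injectives together with exactness of the source, so the "dual" statement for projective resolutions and right-exact functors follows by reversing arrows.
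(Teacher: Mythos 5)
Your proposal is correct and follows the same route as the paper's (sketched) proof: lift $f$ in degree zero using injectivity of $J^0$, then proceed by induction, at each step using exactness of the resolution $I^\cpx$ to factor the relevant map through $\im(d_I^n)\into I^{n+1}$ before applying injectivity; the homotopy between two lifts is built by the identical induction applied to their difference. The paper only indicates this argument, so your write-up is simply the full version of the intended proof.
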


\begin{proof}
We only show existence, because uniqueness up to homotopy can be shown similarly.
By injectivity of $J^0$, there is the lift $f^0$ of the composition $C \to D \into J^0$:
\[
\begin{tikzcd}
C \ar[r, hook] \ar[d, "f"'] & I^0 \ar[d, dashrightarrow, "f^0"] \\
D \ar[r, hook] & J^0 
\end{tikzcd}
\]
The statement can be shown by induction, continuing the argument like in that proof of \autoref{lem:injective-resolution}.
\end{proof}

\begin{remark}
\label{rem:lifting-complex}
Actually, a similar proof which is (notationally) more involved shows that any $f^\cpx \colon C^\cpx \to D^\cpx$ in $\Com^+(\cA)$ for an abelian category $\cA$ with enough injectives can be lifted to a map of complexes $\tilde f^\cpx \colon I^\cpx \to J^\cpx$ with $I^\cpx$ and $J^\cpx$ injective resolutions of $C^\cpx$ and $D^\cpx$.
Again, any two such lifts are homotopic.
\end{remark}

\begin{remark}
For $f^\cpx = \id_{C^\cpx} \colon C^\cpx \to C^\cpx$, we get as an important special case that any two injective resolutions of $C^\cpx$ are homotopic.
\end{remark}

Finally, there is a converse to \autoref{rem:homotopies-quasiisos},
whose proof needs \autoref{rem:lifting-complex}.

\begin{proposition}
\label{prop:injectives-qis}
Let $\cA$ be an abelian category with enough injectives.
Let $f^\cpx \colon I^\cpx \to J^\cpx$ be a quasi-isomorphism of injective complexes in $\Com^+(\cA)$.
Then there is a quasi-isomorphism $g^\cpx \colon J^\cpx \to I^\cpx$ with homotopies $f^\cpx \circ g^\cpx \sim \id_{J^\cpx}$ and $g^\cpx \circ f^\cpx \sim \id_{I^\cpx}$.
\end{proposition}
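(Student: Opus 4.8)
The plan is to make $f^\cpx$ into an isomorphism in $\Hot^+(\cA)$ by reading it as an injective resolution and then invoking \autoref{rem:lifting-complex}. The observation to start from is that a quasi-isomorphism $f^\cpx \colon I^\cpx \to J^\cpx$ between bounded below complexes of injectives is itself an injective resolution of $I^\cpx$; and the identity $\id_{I^\cpx} \colon I^\cpx \to I^\cpx$ is another one. Throughout, lifts are understood only up to homotopy, so the whole argument takes place in $\Hot^+(\cA)$.

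First I would construct $g^\cpx$. Apply \autoref{rem:lifting-complex} (which, like \autoref{prop:lifting}, allows the resolutions to be chosen freely) to the map $\id_{I^\cpx} \colon I^\cpx \to I^\cpx$, using $f^\cpx \colon I^\cpx \to J^\cpx$ as injective resolution of the source and $\id_{I^\cpx}$ itself as injective resolution of the target. This yields a map of complexes $g^\cpx \colon J^\cpx \to I^\cpx$ with $g^\cpx \circ f^\cpx \sim \id_{I^\cpx}$.

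The remaining, and only delicate, point is the homotopy $f^\cpx \circ g^\cpx \sim \id_{J^\cpx}$: a naive attempt to get it by cancelling $f^\cpx$ on the right is circular. Here I would use the uniqueness clause of \autoref{rem:lifting-complex}. Consider lifts of $\id_{I^\cpx} \colon I^\cpx \to I^\cpx$ that this time use $f^\cpx \colon I^\cpx \to J^\cpx$ as injective resolution of \emph{both} source and target; such a lift is a map $\psi^\cpx \colon J^\cpx \to J^\cpx$ with $\psi^\cpx \circ f^\cpx \sim f^\cpx$. Now $\id_{J^\cpx}$ is visibly such a lift, and so is $f^\cpx \circ g^\cpx$, since $f^\cpx \circ g^\cpx \circ f^\cpx \sim f^\cpx \circ \id_{I^\cpx} = f^\cpx$ by the previous step --- here one uses that homotopy is stable under pre- and post-composition with a fixed map of complexes, which is exactly what makes $\Hot(\cA)$ a category. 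As any two such lifts are homotopic, $\id_{J^\cpx} \sim f^\cpx \circ g^\cpx$.

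Finally, from $g^\cpx \circ f^\cpx \sim \id_{I^\cpx}$ and $f^\cpx \circ g^\cpx \sim \id_{J^\cpx}$, \autoref{rem:homotopies-quasiisos} shows at once that $g^\cpx$ (and $f^\cpx$) are quasi-isomorphisms, which is everything that was claimed. I expect the only real obstacle to be the one isolated in the third paragraph, producing the second homotopy without circularity; it is handled not by a computation but by uniqueness of lifts. An alternative route would be to form the mapping cone of $f^\cpx$, note that it is an acyclic bounded below complex of injectives and hence contractible --- again via \autoref{rem:lifting-complex}, or by building a contracting homotopy directly using injectivity degree by degree --- and then read off $g^\cpx$ together with both homotopies from the components of the contraction.
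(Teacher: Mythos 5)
Your argument is correct and is exactly the one the paper has in mind: the paper omits the proof of this proposition, saying only that it ``needs \autoref{rem:lifting-complex}'', and your use of that remark --- reading $f^\cpx$ itself as an injective resolution of $I^\cpx$, obtaining $g^\cpx$ as a lift of $\id_{I^\cpx}$, and then getting $f^\cpx \circ g^\cpx \sim \id_{J^\cpx}$ from uniqueness of lifts rather than by a computation --- is the standard way to carry that out. No gaps; the cone-contractibility alternative you sketch at the end is also valid.
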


Given an abelian category $\cA$ with enough injectives $\cI$,
the last proposition shows that quasi-iso\-morphisms become invertible in $\Hot^+(\cI)$, but even more is true.

\begin{proposition}
\label{prop:injectives-hot}
Let $\cA$ be an abelian category with enough injectives $\cI$.
Then $\Hot^+(\cI)$ is the bounded below derived category $\cD^+(\cA)$ of $\cA$.
\end{proposition}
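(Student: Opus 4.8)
The plan is to verify that the functor $Q \colon \Com^+(\cA) \to \Hot^+(\cI)$ sending a complex $A^\cpx$ to (a choice of) injective resolution $I^\cpx$ — restricted to $\Com^+(\cA)$, but really defined on all of $\Com^+(\cA)$ — satisfies the universal property in the definition of the bounded below derived category. There are three things to check: that $Q$ is a well-defined functor, that it inverts quasi-isomorphisms, and that it is initial among all such functors. The key inputs are Remark \ref{rem:replace-complex} (every $A^\cpx \in \Com^+(\cA)$ admits an injective resolution), Remark \ref{rem:lifting-complex} (maps lift to injective resolutions, uniquely up to homotopy), and Proposition \ref{prop:injectives-qis} (a quasi-isomorphism of bounded-below injective complexes is invertible up to homotopy).

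First I would pin down $Q$. Choose, once and for all, for each $A^\cpx \in \Com^+(\cA)$ an injective resolution $\iota_{A^\cpx}\colon A^\cpx \to I^\cpx_{A^\cpx}$; when $A^\cpx$ is already a complex of injectives one may take the identity. Set $Q(A^\cpx) = I^\cpx_{A^\cpx}$. For a map of complexes $\phi^\cpx \colon A^\cpx \to B^\cpx$, Remark \ref{rem:lifting-complex} produces a lift $\tilde\phi^\cpx\colon I^\cpx_{A^\cpx} \to I^\cpx_{B^\cpx}$, well-defined up to homotopy, so its class in $\Hot^+(\cI)$ is unambiguous; functoriality ($Q(\psi^\cpx \circ \phi^\cpx) = Q(\psi^\cpx)\circ Q(\phi^\cpx)$ and $Q(\id)=\id$) follows because a composite of lifts is a lift and lifts are unique up to homotopy. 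Next, if $\phi^\cpx$ is a quasi-isomorphism, then so is its lift $\tilde\phi^\cpx$ (both fit in a square with the quasi-isomorphisms $\iota_{A^\cpx}, \iota_{B^\cpx}$, hence induce the same map on cohomology up to the isomorphisms $\cH^\cpx(\iota)$); since $\tilde\phi^\cpx$ is now a quasi-isomorphism between bounded-below injective complexes, Proposition \ref{prop:injectives-qis} makes it an isomorphism in $\Hot^+(\cI)$. So $Q$ inverts quasi-isomorphisms.

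For the universal property, let $F\colon \Com^+(\cA) \to \cT$ be any functor inverting quasi-isomorphisms. Define $\bar F\colon \Hot^+(\cI) \to \cT$ on objects by $\bar F(I^\cpx) = F(I^\cpx)$ (viewing $I^\cpx \in \Com^+(\cI) \subset \Com^+(\cA)$), and on a homotopy class $[\phi^\cpx]$ by $F(\phi^\cpx)$ — this is legitimate because homotopic maps induce the same map on cohomology, hence (a standard fact one should invoke, or derive from the cylinder construction) $F$ identifies homotopic maps, since the inclusions of the two ends of the cylinder of $C^\cpx$ are quasi-isomorphisms. One checks $\bar F \circ Q = F$: for $A^\cpx$ with resolution $\iota\colon A^\cpx \to I^\cpx_{A^\cpx}$, one has $F(\iota)$ an isomorphism by hypothesis, and naturality of $\iota$ (again up to the homotopies from Remark \ref{rem:lifting-complex}) gives $\bar F(Q(A^\cpx)) = F(I^\cpx_{A^\cpx}) \cong F(A^\cpx)$ compatibly with morphisms. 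Uniqueness of $\bar F$ is forced: any $G$ with $G\circ Q = F$ must agree with $F$ on objects of $\Com^+(\cI)$ (where $Q$ is essentially the identity) and on morphisms, since every morphism in $\Hot^+(\cI)$ is $Q$ of a map of complexes modulo a quasi-isomorphism that $G$ must invert the same way $\bar F$ does.

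I expect the main obstacle to be the bookkeeping around \emph{well-definedness}: showing that $\bar F$ does not depend on the choices of lifts and resolutions, and that $F$ genuinely kills homotopy rather than merely sending homotopic maps to maps equal on cohomology. The cleanest route is the mapping-cylinder argument — for $f^\cpx \sim g^\cpx$ the difference factors through a cone of an identity, whose structure maps are quasi-isomorphisms — but since cones and cylinders have not yet been introduced in the excerpt, one may instead cite this as the standard fact that a functor inverting quasi-isomorphisms factors through $\Hot$, or postpone the point to \autoref{sec:derived}. The rest is a routine diagram chase once Propositions \ref{prop:injectives-qis} and Remark \ref{rem:lifting-complex} are in hand.
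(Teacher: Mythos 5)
The paper states this proposition without proof (it follows immediately after \autoref{prop:injectives-qis} with only the remark that quasi-isomorphisms become invertible in $\Hot^+(\cI)$), so there is no in-text argument to compare yours against. Your proposal is the standard proof and is essentially correct: the three inputs you isolate --- existence of resolutions (\autoref{rem:replace-complex}), lifting of maps uniquely up to homotopy (\autoref{rem:lifting-complex}), and the fact that a quasi-isomorphism of bounded-below injective complexes is a homotopy equivalence (\autoref{prop:injectives-qis}) --- are exactly what is needed, and the cylinder argument for why a functor inverting quasi-isomorphisms must identify homotopic maps is genuinely required and correctly identified as the standard fact to invoke.

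One point deserves explicit care. With $Q$ defined by choosing a resolution for each complex, the factorisation $\bar F \circ Q = F$ holds only up to the natural isomorphism $F(\iota_{A^\cpx}) \colon F(A^\cpx) \to F(I^\cpx_{A^\cpx})$, as your own ``$\cong$'' concedes; read strictly, the universal property in the paper's definition would fail for this $Q$. Since the paper only ever claims uniqueness of the derived category up to equivalence, verifying the universal property up to natural isomorphism is the intended reading, but you should say so. The alternative route, which most references (e.g.\ Gelfand--Manin) take and which sidesteps this issue, is to first construct $\cD^+(\cA)$ as the Verdier localisation $\Hot^+(\cA)[\qis^{-1}]$ and then show that the composite $\Hot^+(\cI) \into \Hot^+(\cA) \to \Hot^+(\cA)[\qis^{-1}]$ is an equivalence: essentially surjective by \autoref{rem:replace-complex}, and fully faithful because \autoref{prop:injectives-qis} allows every roof with injective target to be straightened into an honest homotopy class of maps of complexes. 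Either way the mathematical content is the same, and your sketch contains all of it.
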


For an arbitrary $F$-adapted class, a quasi-isomorphism might not have a homotopy inverse like in \autoref{prop:injectives-qis}. 
The crucial ingredient there is the lifting property of injective objects.
As usual, we can enforce the existence of such homotopy inverses by formally introducing them.
This will be done in the following section.

\begin{remark}
In the presence of enough projective objects $\cP$ in an abelian category $\cA$, we get statements dual to those in this subsection. 
Most notably, in this case $\Hot^-(\cP)$ is the bounded above derived category $\cD^-(\cA)$ of $\cA$.
\end{remark}

\subsection{The derived category}
\label{sec:derived}

\AIM{
In this section, we will finally give a construction of the derived category and speak about its triangulated structure.
}

For an abelian category $\cA$ let $\qis$ denote the class of all quasi-isomorphisms.
We finally state the existence of the derived category in general, which is due to Verdier.

\begin{theorem}
Let $\cA$ be an abelian category.
The category $\cD(\cA) \coloneqq \Hot(\cA)[\qis^{-1}]$ given by
\begin{itemize}
\item objects: complexes of objects in $\cA$;
\item morphisms: the same as in $\Hot(\cA)$ but with quasi-isomorphisms formally inverted:
\[
\begin{split}
&\Hom_{\cD(\cA)}(C^\cpx,D^\cpx) 
 \coloneqq \Hom_{\Hot(\cA)}(C^\cpx,D^\cpx)[\qis^{-1}]
= \\
& =
\left\{ 
\begin{tikzcd}[column sep=small, ampersand replacement=\&]
\& \tilde C^\cpx \ar[rd, "\tilde f^\cpx"] \ar[ld, "s^\cpx"'] \\
C^\cpx \ar[rr, dashrightarrow, "f"'] 
\&\& D^\cpx
\end{tikzcd}
\,\middle|\,
\begin{array}{l}
\text{ $\tilde C^\cpx$ complex of objects in $\cA$, } \\
\text{ $s^\cpx \in \Hom_{\Hot(\cA)}(C^\cpx,\tilde C^\cpx)$ quasi-isomorphism,}\\
\text{ $\tilde f^\cpx \in \Hom_{\Hot(\cA)}(\tilde C^\cpx, D^\cpx)$. }
\end{array}
\right\}
\end{split}
\]
\end{itemize}
is the derived category of $\cA$.

\end{theorem}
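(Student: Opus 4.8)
The plan is to verify the universal property of the derived category for the pair $(\cD(\cA), Q)$, where $Q \colon \Com(\cA) \to \cD(\cA)$ is the composition of the projection $\Com(\cA) \to \Hot(\cA)$ with the localisation functor $\Hot(\cA) \to \Hot(\cA)[\qis^{-1}]$. The key point is that the general localisation of a category at a class of morphisms always exists (with possibly large Hom-sets), so $\Hot(\cA)[\qis^{-1}]$ makes sense; the content of the theorem is then twofold: first, that the composite $Q$ inverts quasi-isomorphisms and is universal among functors out of $\Com(\cA)$ with this property; and second, that the Hom-sets of the localisation admit the explicit ``roof'' description given in the statement (i.e.\ that $\qis$ admits a calculus of fractions in $\Hot(\cA)$).

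First I would address universality. Any functor $F \colon \Com(\cA) \to \cT$ sending quasi-isomorphisms to isomorphisms must in particular send every homotopy equivalence to an isomorphism, since by \autoref{rem:homotopies-quasiisos} homotopy equivalences are quasi-isomorphisms; hence $F$ factors uniquely through $\Hot(\cA)$ (one checks the standard fact that a functor inverting homotopy equivalences kills the homotopy relation, using the mapping-cylinder-type argument, or more elementarily that homotopic maps become equal after inverting homotopy equivalences). The induced functor $\bar F \colon \Hot(\cA) \to \cT$ still inverts quasi-isomorphisms, so by the universal property of localisation it factors uniquely through $\Hot(\cA)[\qis^{-1}] = \cD(\cA)$. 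Composing the two factorisations, and noting that $Q$ visibly inverts quasi-isomorphisms, gives the required unique factorisation $F = (\text{induced}) \circ Q$; uniqueness follows because $Q$ is surjective on objects and, together with the formal inverses of quasi-isomorphisms, generates all morphisms in $\cD(\cA)$.

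The substantive step — and the one I expect to be the main obstacle — is establishing that $\qis$ is a (say, left) multiplicative system in $\Hot(\cA)$, so that morphisms in the localisation are computed by left fractions as displayed. Concretely one must check: (i) identities are quasi-isomorphisms and quasi-isomorphisms are closed under composition (immediate, since $\cH^\cpx$ is a functor); (ii) the Ore / square-completion condition: given $s^\cpx \colon C^\cpx \to \tilde C^\cpx$ a quasi-isomorphism and any $u^\cpx \colon C^\cpx \to D^\cpx$ in $\Hot(\cA)$, one can complete to a commutative (up to homotopy) square with a quasi-isomorphism $D^\cpx \to \tilde D^\cpx$ on the other side; (iii) the cancellation condition: if $s^\cpx g^\cpx \sim s^\cpx h^\cpx$ (resp.\ $g^\cpx s^\cpx \sim h^\cpx s^\cpx$) with $s^\cpx$ a quasi-isomorphism, then $g^\cpx$ and $h^\cpx$ are identified after composing with a suitable quasi-isomorphism. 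Both (ii) and (iii) are proved using the mapping cone: one builds the relevant quasi-isomorphism by forming a cone and exploiting the long exact cohomology sequence, together with the fact that a map of complexes is a quasi-isomorphism precisely when its cone is acyclic. This is exactly where the triangulated structure on $\Hot(\cA)$ enters, and it is the technically delicate heart of Verdier's construction. Once the calculus of fractions is in place, the description of $\Hom_{\cD(\cA)}(C^\cpx, D^\cpx)$ as equivalence classes of roofs $C^\cpx \xleftarrow{s^\cpx} \tilde C^\cpx \xrightarrow{\tilde f^\cpx} D^\cpx$ is the standard output of the theory of localisation at a multiplicative system, and the identification $\cD(\cA) = \Hot(\cA)[\qis^{-1}]$ with the stated Hom-sets follows.
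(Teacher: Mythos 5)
Your proposal is correct and follows essentially the route the paper itself indicates: the theorem is left unproved in the text, with the subsequent remark pointing to exactly the ingredients you identify, namely that quasi-isomorphisms form a localising class (multiplicative system) in $\Hot(\cA)$ so that roofs compose, that this is established via mapping cones and the long exact cohomology sequence, and that one first passes to $\Hot(\cA)$ to avoid the technical problems of localising $\Com(\cA)$ directly. Your additional verification of the universal property by factoring $F$ through $\Hot(\cA)$ and then through the localisation is the standard completion of this argument and is consistent with the paper's definition of the derived category.
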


\begin{remark}
The definition of the morphisms above as \emph{roofs} is a bit informal. 
For example one needs to show that a zig-zag of two such roofs can be composed to a single roof. 
For this the key ingredient is that quasi-isomorphisms form a \emph{localising class} of morphisms inside $\Hot(\cA)$.
To invert such a class is also called \emph{Verdier localisation}.

One can construct the derived category of $\cA$ also by formally inverting quasi-isomorphisms in $\Com(\cA)$, see \cite[\S III.2.2]{Gelfand-Manin}.
But this causes several technical problems which can be avoided by passing first to $\Hot(\cA)$.
\end{remark}

\begin{remark}
\label{rem:embedding}
There is a natural functor
\[
\cA \to \cD(\cA),\ C \mapsto [\cdots \to 0 \to C \to 0 \to \cdots]
\]
mapping any $C \in \cA$ to the complex with $C$ at the zero position. By slight abuse of notation, we will denote this complex again by $C$.

This functor is fully faithful, \ie for any two $C,D \in \cA$ holds
\[
\Hom_{\cD(\cA)}(C,D) = \Hom_\cA(C,D).
\]
Moreover, the essential image of this functor consists of all complexes $C^\cpx$ such that $\cH^i(C^\cpx)=0$ for $i \neq 0$.
\end{remark}

\subsection*{Triangulated structure}

As the objects of $\cD(\cA)$ are complexes, there is the \emph{shift functor}:
\[
[1] \colon \cD(\cA) \to \cD(\cA),\ C^\cpx \mapsto C^\cpx[1] \coloneqq C^{\cpx+1}.
\]
The usual convention here is that the sign of the differential changes under shift, \ie 
\[
d^i_{C^\cpx[1]} = - d^{i+1}_{C^\cpx}.
\]
With this shift functor, we can define the \emph{(mapping) cone} of a map of complexes $f^\cpx \colon C^\cpx \to D^\cpx$ as the complex $\Cone(f^\cpx)$ with
\[
\Cone^i(f^\cpx) = C^{i+1} \oplus D^i,\ 
d^i_{\Cone(f^\cpx)} = 
\begin{pmatrix}
-d^{i+1}_{C^\cpx} & 0 \\
f^{i+1} & d^i_{D^\cpx}
\end{pmatrix}
\]
We will also write $\Cone(f^\cpx) = C^\cpx[1] \osemiplus D^\cpx$ as a semi-direct sum.
With these definitions $f$ induces a triangle of morphisms in $\cD(\cA)$:
\[
C^\cpx \xto{f^\cpx} D^\cpx \xto{j^\cpx} \Cone(f^\cpx) \xto{p^\cpx} C^\cpx[1].
\]
where $j^\cpx$ is the inclusion of the semi-direct summand $D^\cpx$ and $p^\cpx$ the projection onto $C^\cpx[1]$.

\begin{remark}
We want to stress that only for honest maps of complexes we have an explicit construction of the mapping cone.
Notationally, we will therefore mark a map of complexes $f^\cpx$ always with a dot, in order to distinguish them from (general) morphisms $f$ in $\cD(\cA)$ which are roofs.
\end{remark}

\begin{definition}
We call a sequence of morphisms $C^\bullet \to D^\bullet \to E^\bullet \to C^\cpx[1]$ an \emph{exact triangle} (or \emph{distinguished triangle})

We call a sequence of morphisms $C^\cpx \to D^\cpx \to E^\cpx \to C^\cpx[1]$ an \emph{exact triangle} (or \emph{distinguished triangle})
if there is a commutative diagram in $\cD(\cA)$ of the form
\[
\begin{tikzcd}
C^\cpx \ar[r, "f"] \ar[d, "c"', "\wr"]  & D^\cpx \ar[r] \ar[d, "d"', "\wr"] & E^\cpx \ar[r] \ar[d, "e"', "\wr"] & C^\cpx[1] \ar[d, "{c[1]}"', "\wr"] \\
C'{}^\cpx \ar[r, "\tilde f^\cpx"] & D'{}^\cpx \ar[r]            & \Cone(\tilde f^\cpx) \ar[r]      & C'{}^\cpx[1]           
\end{tikzcd}
\]
with $\tilde f^\cpx$ a map of complexes.

The complex $E^\cpx$ is called the \emph{cone} of the morphism $f \colon C^\cpx \to D^\cpx$ and denoted by $\Cone(f)$.
\end{definition}

\begin{remark}
The triangle is often visualised in the following way:
\[
\begin{tikzcd}[column sep=small]
C^\cpx \ar[rr] && D^\cpx \ar[ld] \\
& E^\cpx \ar[lu, "{[1]}"]
\end{tikzcd}
\]
where the lower left arrow involves a shift by one.
Note that a triangle can also be extended to a long sequence
\[
\cdots \to E^\cpx[-1] \to C^\cpx \to D^\cpx \to E^\cpx \to C^\cpx[1] \to D^\cpx[1] \to \cdots
\]
which is actually a complex in $\cD(\cA)$, see \autoref{rem:exact} below.
\end{remark}

Distinguished triangles generalise short exact sequences in a very precise way.

\begin{proposition}
Let $0 \to C \xto{f} D \xto{g} E \to 0$ be a short exact sequence in the abelian category $\cA$.
Considering these objects in $\cD(\cA)$, they form the exact triangle
 $C \xto{f} D \xto{g} E \to C[1]$.
\end{proposition}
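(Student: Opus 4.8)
The plan is to exhibit an explicit quasi-isomorphism from the two-term complex $\Cone(f^\cpx)$ (for $f^\cpx \colon C \to D$ viewed as a map of complexes concentrated in degree $0$) to the object $E$, and then to invoke the definition of exact triangle. First I would unwind the cone construction: since $C$ and $D$ sit in degree $0$, the complex $\Cone(f^\cpx)$ has $C$ in degree $-1$, $D$ in degree $0$, with differential $f$, and zero elsewhere. The canonical map $j^\cpx \colon D \to \Cone(f^\cpx)$ and $p^\cpx \colon \Cone(f^\cpx) \to C[1]$ are the inclusion and projection, and by the triangulated structure recalled in the excerpt, $C \xto{f} D \xto{j^\cpx} \Cone(f^\cpx) \xto{p^\cpx} C[1]$ is an exact triangle.

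Next I would produce a map of complexes $\varphi^\cpx \colon \Cone(f^\cpx) \to E$ using the surjection $g \colon D \onto E$ in degree $0$ (and zero in degree $-1$); this is a map of complexes precisely because $g \circ f = 0$, which is part of the exactness of $0 \to C \to D \to E \to 0$. Then I would check that $\varphi^\cpx$ is a quasi-isomorphism by computing cohomology: in degree $-1$, $\cH^{-1}(\Cone(f^\cpx)) = \ker(f) = 0$ since $f$ is injective; in degree $0$, $\cH^0(\Cone(f^\cpx)) = \coker(f) = D/\im(f) = D/\ker(g) \cong E$ via the map induced by $g$, using exactness at $D$; in all other degrees both sides vanish. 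Hence $\cH^\cpx(\varphi^\cpx)$ is an isomorphism, so $\varphi^\cpx$ becomes an isomorphism in $\cD(\cA)$.

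Finally I would assemble the comparison diagram required by the definition of a distinguished triangle: take $\tilde f^\cpx = f$, let the vertical maps be $\id_C$, $\id_D$, the isomorphism $\varphi$ (in $\cD(\cA)$), and $\id_{C[1]}$, and observe that the resulting square involving $E \to C[1]$ and $\Cone(f^\cpx) \to C[1]$ commutes because $p^\cpx$ factors through $\varphi^\cpx$ — indeed $p^\cpx$ is zero on the degree-$0$ part and $\varphi^\cpx$ is an isomorphism on cohomology, so the connecting map $E \to C[1]$ is forced to be $p^\cpx \circ (\varphi^\cpx)^{-1}$, and one checks compatibility of the middle square $D \to E$ with $D \to \Cone(f^\cpx)$ under $\varphi^\cpx$, which holds by construction since $\varphi^\cpx \circ j^\cpx = g$. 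This exhibits $C \xto{f} D \xto{g} E \to C[1]$ as isomorphic to the standard cone triangle, hence distinguished.

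I expect the main obstacle to be bookkeeping rather than conceptual: namely being careful about the degree conventions (the shift puts $C$ in degree $-1$, and the sign change in the differential is harmless here since $C$ is a single object) and verifying that the square on the far right of the comparison diagram genuinely commutes in $\cD(\cA)$ — this requires recognizing that the morphism $E \to C[1]$ in the statement is \emph{defined} to be the one making the diagram commute, so there is nothing to prove there beyond noting the other three squares commute, which they do by the explicit choices $\id_C$, $\id_D$, $\varphi$. No localising-class subtleties enter, since $\varphi^\cpx$ is an honest map of complexes that happens to be a quasi-isomorphism.
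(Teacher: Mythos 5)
Your proposal is correct and follows essentially the same route as the paper: both identify $\Cone(f)$ with the two-term complex $[C \xto{f} D]$, observe that $g$ defines a quasi-isomorphism $\Cone(f) \to E$ (using injectivity of $f$ for $\cH^{-1}$ and exactness at $D$ for $\cH^0$), and complete this to an isomorphism of triangles with identities on $C$, $D$ and $C[1]$. You merely spell out the cohomology computation and the commutativity checks that the paper leaves as ``one can check.''
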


\begin{proof}
We consider the exact triangle
$C \xto{f} D \xto{j} \Cone(f) \xto{p} C[1]$.
Note that $\Cone(f)$ is a two-term complex, quasi-isomorphic to $E$:
\[
\begin{tikzcd}
\Cone(f) \ar[d, "g"'] & 0 \ar[r] & C \ar[r, "f"] & D \ar[r] \ar[d, "g"] & 0 \\
E                     &          & 0 \ar[r]      & E \ar[r]             & 0
\end{tikzcd}
\]
One can check that this quasi-isomorphism can be completed to a diagram of quasi-isomorphisms, which shows the claim:
\[
\begin{tikzcd}
C \ar[r, "f"] \ar[d, equal] & D \ar[r, "j"] \ar[d, equal] & \Cone(f) \ar[r, "p"] \ar[d, "g \in \qis"] & C[1] \ar[d, equal]\\
C \ar[r, "f"] & D \ar[r, "g"] & E \ar[r, "h"] & C[1] & \qedhere
\end{tikzcd}
\]
\end{proof}

\begin{remark}
The mindful reader may ask about the third morphism in the triangle, namely $h \colon E \to C[1]$.
Note that $h \in \Hom_{\cD(\cA)}(E,C[1]) = \Ext^1_{\cA}(E,C)$, see \autoref{ex:ext}.
It is well-known that $\Ext^1(E,C)$ corresponds to extensions, so $h$ encodes the middle term $D$; see \cite[\S III.6.2]{Gelfand-Manin} for a discussion of this.
\end{remark}

\begin{theorem}
Let $\cA$ be an abelian category.
Then its derived category $\cD(\cA)$ is a \emph{triangulated category}, \ie it satisfies the four axioms {\bf TR1} -- {\bf TR4}.
\end{theorem}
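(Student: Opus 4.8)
The plan is to verify the four axioms of a triangulated category directly, using the explicit model of $\cD(\cA) = \Hot(\cA)[\qis^{-1}]$ and the explicit cone construction for honest maps of complexes. The key observation that makes everything work is that every morphism in $\cD(\cA)$ is represented by a roof $C^\cpx \xleftarrow{s^\cpx} \tilde C^\cpx \xrightarrow{\tilde f^\cpx} D^\cpx$ with $s^\cpx$ a quasi-isomorphism, and since $s^\cpx$ becomes invertible, any morphism is isomorphic (as an object of the arrow category) to an honest map of complexes $\tilde f^\cpx$. Thus for checking statements about triangles, one may always replace a given morphism by an honest map of complexes and its distinguished triangle by the mapping-cone triangle; this is exactly what the definition of exact triangle was set up to allow.

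First I would dispatch \textbf{TR1}: every morphism $f$ fits into a triangle (replace $f$ by an honest $\tilde f^\cpx$ and take $\Cone(\tilde f^\cpx)$, as constructed above); the triangle $C^\cpx \xrightarrow{\id} C^\cpx \to 0 \to C^\cpx[1]$ is exact (the cone of the identity is acyclic, hence zero in $\cD(\cA)$); and any triangle isomorphic to a distinguished one is distinguished, which is immediate from the definition. For \textbf{TR2}, the rotation axiom, the content is that $C^\cpx \xrightarrow{f^\cpx} D^\cpx \xrightarrow{j^\cpx} \Cone(f^\cpx) \xrightarrow{p^\cpx} C^\cpx[1]$ is exact if and only if its rotation $D^\cpx \xrightarrow{j^\cpx} \Cone(f^\cpx) \xrightarrow{p^\cpx} C^\cpx[1] \xrightarrow{-f^\cpx[1]} D^\cpx[1]$ is. Here one exhibits an explicit homotopy equivalence between $\Cone(j^\cpx)$ and $C^\cpx[1]$: the obvious projection $\Cone(j^\cpx) = (C^{\cpx+1} \oplus D^\cpx) \oplus D^{\cpx+1} \to C^{\cpx+1}$ is a quasi-isomorphism (indeed a homotopy equivalence), and one checks the resulting square commutes up to the sign conventions on the differential under shift. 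For \textbf{TR3}, given a morphism of the first two terms of two distinguished triangles, one reduces to honest maps of complexes and writes down the induced map on cones by the block-matrix formula for $\Cone$; functoriality of the cone construction on honest maps does the rest.

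The main obstacle is \textbf{TR4}, the octahedral axiom. My plan is again to reduce to honest maps of complexes: given composable honest maps $f^\cpx\colon C^\cpx \to D^\cpx$ and $g^\cpx\colon D^\cpx \to E^\cpx$, one must relate the three cones $\Cone(f^\cpx)$, $\Cone(g^\cpx)$, $\Cone(g^\cpx \circ f^\cpx)$ by an exact triangle. The trick is to write down explicit chain maps $\Cone(f^\cpx) \to \Cone(g^\cpx \circ f^\cpx) \to \Cone(g^\cpx)$ using the block-matrix descriptions (the first built from $g^\cpx$ in the appropriate slot, the second a projection), and then verify that the second map has cone homotopy-equivalent to $\Cone(f^\cpx)[1]$ via an explicit homotopy. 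The bookkeeping with signs and shifts is where all the work lies; none of it is conceptually deep, but it is the step most likely to hide errors, so one either does it carefully by hand or cites the standard reference.

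Since the paper's stated style is to indicate main ideas rather than grind through details, I would present this as a sketch: state the reduction-to-honest-maps principle, note that TR1--TR3 are then routine from the explicit cone formulas, describe the explicit chain maps for TR4, and refer to \cite{Gelfand-Manin} or \cite{Huybrechts} for the full verification of the homotopies and sign conventions.

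\begin{proof}
We only indicate the main ideas; full details may be found in \cite[\S IV.1]{Gelfand-Manin} or \cite[\S 1.4, Exercise~1.36]{Huybrechts}.

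The key principle is that every morphism $f \colon C^\cpx \to D^\cpx$ in $\cD(\cA)$ is, up to isomorphism of triangles, an honest map of complexes: writing $f$ as a roof $C^\cpx \xfrom{s^\cpx} \tilde C^\cpx \xto{\tilde f^\cpx} D^\cpx$ with $s^\cpx$ a quasi-isomorphism, the morphism $s^\cpx$ becomes an isomorphism in $\cD(\cA)$, so $f$ is identified with $\tilde f^\cpx \colon \tilde C^\cpx \to D^\cpx$. By the very definition of exact triangle, we may therefore always assume that the morphism under consideration is a map of complexes and that its triangle is the mapping-cone triangle.

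\textbf{TR1.} Any triangle isomorphic to an exact one is exact by definition. For any $f$, replace it by an honest map of complexes $f^\cpx$ and take the mapping-cone triangle $C^\cpx \xto{f^\cpx} D^\cpx \xto{j^\cpx} \Cone(f^\cpx) \xto{p^\cpx} C^\cpx[1]$. Finally, $\Cone(\id_{C^\cpx})$ is an acyclic complex, hence zero in $\cD(\cA)$, so $C^\cpx \xto{\id} C^\cpx \to 0 \to C^\cpx[1]$ is exact.

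\textbf{TR2.} One checks that for an honest map $f^\cpx$, the complex $\Cone(j^\cpx)$ is homotopy equivalent to $C^\cpx[1]$: the projection $\Cone(j^\cpx) = (C^{\cpx+1}\oplus D^\cpx)\oplus D^{\cpx+1} \to C^{\cpx+1}$ is a homotopy equivalence, with homotopy inverse built from $f^\cpx$. Chasing through the sign convention $d_{C^\cpx[1]} = -d_{C^\cpx}$, this identifies the rotated mapping-cone triangle of $f^\cpx$ with the mapping-cone triangle of $j^\cpx$, which gives both directions of the rotation axiom.

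\textbf{TR3.} Given a commutative square relating the first two terms of two exact triangles, reduce both to mapping-cone triangles of honest maps $f^\cpx, g^\cpx$ and an honest commuting (up to homotopy) square $u^\cpx f^\cpx \sim g^\cpx v^\cpx$. The block-matrix formula for the differential of a cone shows that $\begin{pmatrix} u^{\cpx+1} & 0 \\ h^{\cpx+1} & v^\cpx \end{pmatrix}$, where $h^\cpx$ is a homotopy realising $u^\cpx f^\cpx \sim g^\cpx v^\cpx$, defines a map of complexes $\Cone(f^\cpx) \to \Cone(g^\cpx)$ completing the morphism of triangles.

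\textbf{TR4.} For composable honest maps $f^\cpx \colon C^\cpx \to D^\cpx$ and $g^\cpx \colon D^\cpx \to E^\cpx$ one writes down, using the block-matrix descriptions of the three cones $\Cone(f^\cpx)$, $\Cone(g^\cpx f^\cpx)$, $\Cone(g^\cpx)$, explicit maps of complexes
\[
\Cone(f^\cpx) \to \Cone(g^\cpx f^\cpx) \to \Cone(g^\cpx),
\]
the first induced by $g^\cpx$ on the $D$-summand, the second the obvious projection. An explicit homotopy then shows that the cone of the second map is homotopy equivalent to $\Cone(f^\cpx)[1]$, which produces the required exact triangle and the commutativities of the octahedron. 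The verification is a (somewhat lengthy) bookkeeping exercise with signs and shifts; we refer to the sources cited above.
\end{proof}
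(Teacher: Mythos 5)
Your proposal is correct and follows essentially the same route as the paper: reduce every morphism to an honest map of complexes via the roof description, verify \textbf{TR1}--\textbf{TR3} using the explicit mapping-cone formulas (cone of the identity null-homotopic, $\Cone(j^\cpx)$ homotopy equivalent to $C^\cpx[1]$, block-matrix map of cones for \textbf{TR3}), and defer the bookkeeping for \textbf{TR4}. If anything, you are slightly more careful than the paper in \textbf{TR3} (recording the homotopy term in the block matrix) and in sketching \textbf{TR4}, which the paper omits entirely as too technical.
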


\subsection*{TR1}
The triangle $C^\cpx \xto{\id} C^\cpx \to 0 \to C^\cpx[1]$ is exact.\\
Any triangle isomorphic to an exact one is again exact.\\
Any morphism $f \colon C^\cpx \to D^\cpx$ can be completed to an exact triangle.

\begin{proof}[Proof of {\bf TR1} for $\cD(\cA)$]
For the derived category $\cD(\cA)$ the second clause is satisfied by definition. For the first clause, one only needs to check that the cone $\Cone(\id)$ is homotopic to the zero complex.
To see the last, write a morphism $f \colon C^\cpx \to D^\cpx$ as a roof $f = \tilde f^\cpx \circ (s^\cpx)^{-1}$, which fits into the following commutative diagram of exact triangles:
\[
\begin{tikzcd}
C^\cpx \ar[r, "f"] \ar[d, "(s^\cpx)^{-1}"', "\wr"] & D^\cpx \ar[d, equal] \ar[r, dashrightarrow, "j^\cpx"] & C(f) \ar[r, dashrightarrow, "{s^\cpx[1] \circ p^\cpx}"] \ar[d, equal] & C^\cpx[1] \ar[d, "{(s^\cpx)^{-1}[1]}", "\wr"']\\
\tilde C^\cpx \ar[r, "\tilde f^\cpx"] & D^\cpx \ar[r, "j^\cpx"] & \Cone(f^\cpx) \ar[r, "p^\cpx"] & \tilde C^\cpx[1]
&\qedhere
\end{tikzcd}
\]
\end{proof}

\begin{remark}
\label{rem:conekercoker}
By the last clause of {\bf TR1}, cones in the derived category $\cD(\cA)$ unifying both kernel and cokernel of the abelian category $\cA$.
More precisely, considering a map $f \colon C \to D$ in $\cA$ as a map of complexes in $\cD(\cA)$, one can check that $\cH^{-1}(\Cone(f)) = \ker(f)$ and $\cH^0(\Cone(f)) = \coker(f)$.
\end{remark}

\subsection*{TR2}
The triangle $C^\cpx \xto{f} D^\cpx \xto{g} E^\cpx \xto{h} C^\cpx[1]$ is exact if and only if
$D^\cpx \xto{g} E^\cpx \xto{h} C^\cpx[1] \xto{-f[1]} D^\cpx[1]$ is.

\begin{proof}[Proof of {\bf TR2} for $\cD(\cA)$]
We only discuss ``$\implies$'' a bit (as the converse direction is analoguous).
By {\bf TR1} we may assume that $f = f^\cpx$ is a map of complexes,  $E^\cpx \cong \Cone(f^\cpx)$ and $g^\cpx \colon D^\cpx \to \Cone(f^\cpx)$ is the inclusion as a semi-direct summand.
We have to show that $C^\cpx[1]$ is isomorphic to $\Cone(g^\cpx)$.
Note that by our simplifications $\Cone(g^\cpx) = D^\cpx[1] \osemiplus C^\cpx[1] \osemiplus D^\cpx$. One can now check that
\[
(-f^\cpx[1], \id, 0) \colon D^\cpx[1] \osemiplus C^\cpx[1] \osemiplus D^\cpx \to \Cone(g^\cpx)
\]
gives the desired isomorphism.
\end{proof}

\subsection*{TR3}
Given two exact triangles and two morphisms $c$ and $d$ as below:
\[
\begin{tikzcd}
C^\cpx \ar[r, "f"] \ar[d, "c"] & D^\cpx \ar[r] \ar[d, "d"] & E^\cpx \ar[r] \ar[d, dashrightarrow, "e"] & C^\cpx[1] \ar[d, "{c[1]}"]\\
C'{}^\cpx \ar[r, "f'"] & D'{}^\cpx \ar[r] & E'{}^\cpx \ar[r] & C'{}^\cpx[1]
\end{tikzcd}
\]
then there is a (not necessarily unique) morphism $e$ making this diagram commutative.

\begin{proof}[Proof of {\bf TR3} for $\cD(\cA)$]
After replacing $f$ and $f'$ by maps of complexes, and $E^\cpx$ and $E'{}^\cpx$ by the respective cones, one can check that $e = (c[1],d)$ fits into the diagram.
\end{proof}

\begin{remark}
One might suppose by our reasoning about the existence of the dashed morphism in $\cD(\cA)$, that cones are functorial in $\cD(\cA)$, \ie given a natural transformation $\eta \colon F \to G$ between functors preserving exact triangles, there exist the functor $\Cone(\eta)$ of cones.

In a naive way, such a statement is wrong. Take for example the exact triangle $C^\cpx \xto{\id} C^\cpx \to 0 \to C^\cpx[1]$ for any non-trivial $C^\cpx \in \cD(\cA)$.
After shifting, we can write down the following diagram
\[
\begin{tikzcd}
C^\cpx \ar[r] \ar[d] & 0 \ar[r] \ar[d] & C^\cpx[1] \ar[r, "{-\id[1]}"] \ar[d, dashrightarrow] & C^\cpx[1] \ar[d] \\
0 \ar[r] & C^\cpx[1] \ar[r, "{-\id[1]}"] & C^\cpx[1] \ar[r] & 0
\end{tikzcd}
\]
All the non-labelled solid arrows are just zero morphisms. For the dashed arrow, we can choose any morphism $C^\cpx[1] \to C^\cpx[1]$.

But in a more sophisticated way, such a statement is true for derived categories using dg-enhancements, a topic that we will not enter here.
\end{remark}

\begin{remark}
By \cite[Lem. 2.2]{May}, {\bf TR3} is not necessary as an axiom, it follows from the other three axioms. But we prefer to keep it in this list, as it is an often used property of triangulated categories.
Finally, this shows also that the non-functoriality of cones inside a triangulated category goes deeper than {\bf TR3}.
\end{remark}

\begin{remark}
\label{rem:exact}
From {\bf TR1} -- {\bf TR3} follows that in exact triangles, the composition of two consecutive morphisms is zero.
This follows from the following diagram (and shifted versions):
\[
\begin{tikzcd}
C^\cpx \ar[r, "\id"] \ar[d, "\id"] & C^\cpx \ar[r] \ar[d, "f"] \ar[dr, phantom, "\displaystyle \circlearrowleft"]
 & 0 \ar[r] \ar[d, dashrightarrow] & C^\cpx[1] \ar[d, "\id"] \\
C^\cpx \ar[r, "f"] & D^\cpx \ar[r, "g"] & E^\cpx \ar[r, "h"] & C^\cpx[1]
\end{tikzcd}
\]
\end{remark}

%

\subsection*{TR4}
Given two morphisms $f \colon C^\cpx \to D^\cpx$ and $g \colon D^\cpx \to E^\cpx$, there is a triangle of cones
\[
\Cone(f) \to \Cone(g \circ f) \to \Cone(g) \to \Cone(f)[1]
\]
which fits into the following commutative diagram (where we suppress for simplicity the last degree-increasing morphism in the exact triangles):
\begin{center}
\begin{tikzpicture}
\tikzset{ar/.style={decoration={markings,mark=at position 1 with {\arrow[scale=2]{>}}}, postaction={decorate}}}
\node (A) at (150:1.2cm) {$C^\cpx$};
\node (B) at (30:1.2cm) {$D^\cpx$};
\node (C) at (270:1.2cm) {$E^\cpx$};
\draw [ar] (A) -- (B) node[auto,midway] {$f$};
\draw [ar] (A) -- (C) node[auto,swap,midway] {$g\circ f$};
\draw [ar] (B) -- (C) node[auto,midway] {$g$};
\node (cf) at ($(B) +(2.4cm,0)$) {$\Cone(f)$};
\node (cg) at ($(C) +(-120:2.4cm)$) {$\Cone(g)$};
\node (cgf) at ($(C) +(-60:2.4cm)$) {$\ \ \ \ \Cone(g\circ f)$};
\draw  [ar] (B) -- (cf);
\draw  [ar] (C) -- (cgf);
\draw  [ar] (C) -- (cg);
\draw [dashed,ar] (5:3.6cm) arc (5:-50:3.6cm);
\draw [dashed,ar] (-80:3.6cm) arc (-80:-100:3.6cm);
\end{tikzpicture}
\end{center}

We omit the proof of {\bf TR4} for $\cD(\cA)$, as it is more technical.

\begin{remark}
The last axiom goes under the name \emph{octahedral axiom} as it can be pictured by a diagram in the form of an octahedron, but we think that the above diagram is more helpful.
It comes from the following lemma about abelian categories,
which one might call \emph{windmill lemma}:

Given $f\colon C \to D$ and $g \colon D \to E$ in an abelian category $\cA$.
Then there is an exact sequence of kernels and cokernels fitting into the commutative diagram of \autoref{fig:windmill}.
\begin{figure}
\scalebox{0.8}{
\begin{tikzpicture}
\tikzset{ar/.style={decoration={markings,mark=at position 1 with {\arrow[scale=2]{>}}}, postaction={decorate}}}
\node (A) at (150:1.5cm) {$C$};
\node (B) at (30:1.5cm) {$D$};
\node (C) at (270:1.5cm) {$E$};
\draw [ar] (A) -- (B) node[auto,midway] {$f$};
\draw [ar] (A) -- (C) node[auto,swap,midway] {$g\circ f$};
\draw [ar] (B) -- (C) node[auto,midway] {$g$};
\node (kf) at ($(A) +(-3cm,0)$) {$\ker(f)$};
\node (kgf) at ($(A) +(120:3cm)$) {$\ker(g\circ f)$};
\node (cf) at ($(B) +(3cm,0)$) {$\coker(f)$};
\node (kg) at ($(B) +(60:3cm)$) {$\ker(g)$};
\node (cg) at ($(C) +(-120:3cm)$) {$\coker(g)$};
\node (cgf) at ($(C) +(-60:3cm)$) {$\coker(g\circ f)$};
\draw  [ar] (kf) -- (A);
\draw  [ar] (kgf) -- (A);
\draw  [ar] (kg) -- (B);
\draw  [ar] (B) -- (cf);
\draw  [ar] (C) -- (cgf);
\draw  [ar] (C) -- (cg);
\node (z1) at (195:4.5cm) {$0$};
\node (z2) at (220:4.5cm) {$0$};
\draw [dashed,ar] (165:4.5cm) arc (165:140:4.5cm);
\draw [dashed,ar] (120:4.5cm) arc (120:60:4.5cm);
\draw [dashed,ar] (40:4.5cm) arc (40:15:4.5cm);
\draw [dashed,ar] (5:4.5cm) arc (5:-55:4.5cm);
\draw [dashed,ar] (-80:4.5cm) arc (-80:-100:4.5cm);
\draw [dashed,ar] (-120:4.5cm) arc (-120:-135:4.5cm);
\draw [dashed,ar] (190:4.5cm) arc (190:175:4.5cm);
\end{tikzpicture}
}
\caption{The windmill lemma.}
\label{fig:windmill}
\end{figure}
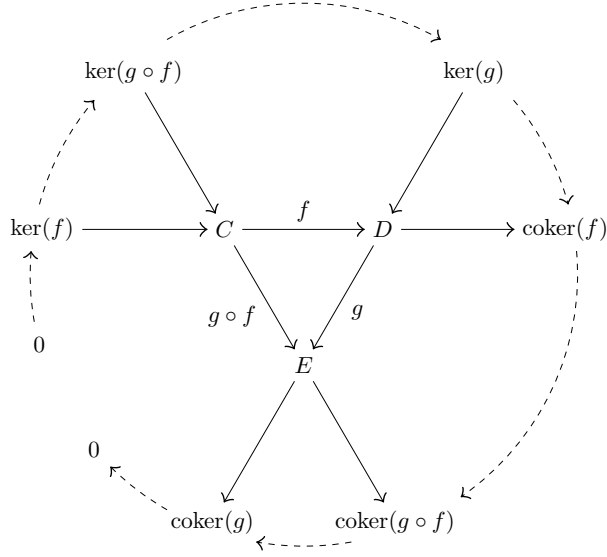
The proof is an exercise in homological algebra,
but may also be deduced from the octahedral axiom using \autoref{rem:conekercoker}.
\end{remark}

\begin{remark}
Just by restricting the class of exact triangles, 
one can also see that $\cD^-(\cA)$, $\cD^+(\cA)$ and $\Db(\cA)$ are triangulated categories.
\end{remark}

\subsection{Exact functors}
\label{sec:functors}

\AIM{
In this section, we introduce the notion of an exact functor between triangulated categories.
}

\begin{definition}
A functor $F \colon \cT \to \cT'$ between triangulated categories is called \emph{exact}, if 
\begin{itemize}
\item $F$ commutes with shifts, \ie there is a functor isomorphism $F \circ [1]_{\cT} \cong [1]_{\cT'} \circ F$;
\item for any exact triangle $A \to B \to C \to A[1]$ in $\cT$, its image $F(A) \to F(B) \to F(C) \to F(A)[1]$ is exact in $\cT'$.
\end{itemize}
\end{definition}


\begin{proposition}
Let $\cA$ be an abelian category and $\cD(\cA)$ its derived category.
Then there is the functor $\cH^\cpx \colon \cD(\cA) \to \cD(\cA)$ which sends each complex $C^\cpx$ to its cohomology $\cH^\cpx(C^\cpx)$ equipped with the zero differential.
This functor is an exact functor. 
\end{proposition}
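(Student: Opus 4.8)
The plan is to split the statement into its two halves: first that $\cH^\cpx$ is well-defined as a functor on $\cD(\cA)$, and second that it is exact in the sense of the preceding definition. For the first half, the key point is that $\cH^\cpx$ as defined on $\Com(\cA)$ descends to $\cD(\cA)$. By the universal property of the derived category (or, equivalently, by the roof description of morphisms), it suffices to check that the composite $\Com(\cA) \to \Com(\cA) \xrightarrow{\cH^\cpx} \cD(\cA)$ sends quasi-isomorphisms to isomorphisms; but this is immediate, since $f^\cpx$ being a quasi-isomorphism means precisely that $\cH^\cpx(f^\cpx)$ is an isomorphism of complexes (with zero differential), hence an isomorphism in $\cD(\cA)$. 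It follows that $\cH^\cpx$ factors through $Q \colon \Com(\cA) \to \cD(\cA)$, giving the desired functor.

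For the second half I would verify the two clauses of exactness. Commutation with shifts is a direct computation: $\cH^i(C^\cpx[1]) = \cH^{i+1}(C^\cpx)$, and the sign change in the differential of the shifted complex is irrelevant since the target carries the zero differential; so $\cH^\cpx([1]_{\cD(\cA)}(-)) \cong [1]_{\cD(\cA)}(\cH^\cpx(-))$ naturally. For the triangle condition, by \textbf{TR1} every exact triangle is isomorphic to one of the form $C^\cpx \xrightarrow{f^\cpx} D^\cpx \xrightarrow{j^\cpx} \Cone(f^\cpx) \xrightarrow{p^\cpx} C^\cpx[1]$ with $f^\cpx$ an honest map of complexes; since $\cH^\cpx$ is a functor on $\cD(\cA)$, it carries isomorphic triangles to isomorphic ones, so it is enough to treat this standard triangle. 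Here the point is the classical long exact cohomology sequence associated to a mapping cone: unwinding $\cH^i(\Cone(f^\cpx)) = \cH^i(C^{\cpx+1} \oplus D^\cpx)$ with the cone differential, one obtains the long exact sequence
\[
\cdots \to \cH^i(C^\cpx) \xrightarrow{\cH^i(f^\cpx)} \cH^i(D^\cpx) \to \cH^i(\Cone(f^\cpx)) \to \cH^{i+1}(C^\cpx) \to \cdots
\]
This long exact sequence is exactly the statement that $\cH^\cpx(C^\cpx) \to \cH^\cpx(D^\cpx) \to \cH^\cpx(\Cone(f^\cpx)) \to \cH^\cpx(C^\cpx)[1]$, viewed as objects with zero differential, is an exact triangle in $\cD(\cA)$ — one checks directly that $\cH^\cpx(\Cone(f^\cpx))$ is quasi-isomorphic to the cone of $\cH^\cpx(f^\cpx)$, using \autoref{rem:conekercoker} which identifies the degree-$(-1)$ and degree-$0$ cohomology of a cone of a map in $\cA$ with its kernel and cokernel, applied degreewise to the map $\cH^\cpx(f^\cpx)$.

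The main obstacle is the last comparison: producing a natural quasi-isomorphism between $\cH^\cpx(\Cone(f^\cpx))$ and $\Cone(\cH^\cpx(f^\cpx))$, or equivalently extracting the connecting maps of the long exact sequence in a way that assembles into an actual morphism in $\cD(\cA)$ with zero differential. Concretely one needs the snake-lemma boundary maps $\cH^i(\Cone(f^\cpx)) \to \cH^{i+1}(C^\cpx)$ and must check they are compatible with the triangle structure; this is the one genuinely homological-algebra computation, and I would carry it out by breaking the cone into the short exact sequence of complexes $0 \to D^\cpx \to \Cone(f^\cpx) \to C^\cpx[1] \to 0$ and invoking the long exact cohomology sequence of a short exact sequence of complexes, identifying the connecting homomorphism with $\cH^\cpx(f^\cpx)$ up to the sign dictated by the shift convention. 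Everything else is formal bookkeeping with the universal property and the definition of exact functor.
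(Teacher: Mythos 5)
The paper offers no proof of this proposition at all: it is stated bare, and the following \autoref{rem:longexact} immediately reformulates it as the long exact cohomology sequence, which is the intended content. Your first half (descending $\cH^\cpx$ through $Q$ via the universal property), the shift computation, and the derivation of the long exact sequence from $0 \to D^\cpx \to \Cone(f^\cpx) \to C^\cpx[1] \to 0$ are all correct and standard. The genuine gap is in the final step, and it is not an "obstacle" to be overcome but an impossibility: the long exact sequence is strictly weaker than the assertion that
\[
\cH^\cpx(C^\cpx) \to \cH^\cpx(D^\cpx) \to \cH^\cpx(\Cone(f^\cpx)) \to \cH^\cpx(C^\cpx)[1]
\]
is an exact triangle, and the quasi-isomorphism $\cH^\cpx(\Cone(f^\cpx)) \simeq \Cone(\cH^\cpx(f^\cpx))$ you plan to produce does not exist in general. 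Since $\cH^\cpx(C^\cpx)$ and $\cH^\cpx(D^\cpx)$ carry the zero differential, the differential of $\Cone(\cH^\cpx(f^\cpx))$ is strictly lower-triangular and one computes
\[
\cH^i\bigl(\Cone(\cH^\cpx(f^\cpx))\bigr) \cong \ker \cH^{i+1}(f^\cpx) \oplus \coker \cH^i(f^\cpx),
\]
a split direct sum, whereas the long exact sequence only exhibits $\cH^i(\Cone(f^\cpx))$ as a possibly non-split extension of $\ker \cH^{i+1}(f^\cpx)$ by $\coker \cH^i(f^\cpx)$.

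A concrete failure in the category of abelian groups: let $C^\cpx = [\ZZ \xto{2} \ZZ]$ in degrees $0,1$, let $D^\cpx = \ZZ/2$ in degree $0$, and let $f^0$ be reduction mod $2$ and $f^1 = 0$. Then $\cH^\cpx(f^\cpx) = 0$, so $\Cone(\cH^\cpx(f^\cpx)) \cong \ZZ/2 \oplus \ZZ/2$ concentrated in degree $0$; but $\cH^0(\Cone(f^\cpx)) \cong (\ZZ \oplus \ZZ/2)/\langle(-2,\bar 1)\rangle \cong \ZZ/4$ and $\cH^{-1}(\Cone(f^\cpx)) = 0$. Since $\ZZ/4$ and $\ZZ/2\oplus\ZZ/2$ are not isomorphic in $\cD(\cA)$, the image of the exact triangle $C^\cpx \to D^\cpx \to \Cone(f^\cpx) \to C^\cpx[1]$ under $\cH^\cpx$ is not an exact triangle. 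So $\cH^\cpx$ is a \emph{cohomological} functor (exact triangles go to long exact sequences), not an exact functor of triangulated categories in the strict sense of the paper's own definition; the proposition's literal wording overstates what is true, and your proof should stop at the long exact sequence of \autoref{rem:longexact} rather than attempt the upgrade to an exact triangle.
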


\begin{remark}
\label{rem:longexact}
The statement of this proposition is usually formulated differently. For any exact triangle $C^\cpx \xto{c} D^\cpx \xto{d} E^\cpx \xto{e} C^\cpx[1]$ in $\cD(\cA)$, its image under $\cH^\cpx$ can be rolled out to a \emph{long exact sequence in cohomology}:
\[
\cdots \to \cH^i(C^\cpx) \xto{\cH^i(c)} \cH^i(D^\cpx) \xto{\cH^i(d)} \cH^i(E^\cpx) \xto{\cH^i(e)} \cH^{i+1}(C^\cpx) \to \cdots
\]
\end{remark}

For defining derived functors in general, we first give an analogue of \autoref{prop:injectives-hot}.

\begin{proposition}
\label{thm:derived-functor}
Let $F \colon \cA \to \cB$ be a left-exact functor, and let $\cI_F$ be an $F$-adapted class.
Then the inclusion $\Com^+(\cI_F) \subset \Com^+(\cA)$ induces an equivalence 
\[
\iota_F \colon \Hot^+(\cI_F)[\qis^{-1}] \to \cD^+(\cA).
\]
\end{proposition}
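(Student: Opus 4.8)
The plan is to realise $\Hot^+(\cI_F)[\qis^{-1}]$ as a Verdier localisation of a triangulated subcategory of $\Hot^+(\cA)$ and to deduce the equivalence from the general principle that a triangulated subcategory through which ``enough resolutions'' factor induces an equivalence on Verdier quotients. First I would record that $\iota_F$ is well defined: the composite $\Hot^+(\cI_F)\hookrightarrow\Hot^+(\cA)\to\cD^+(\cA)$ inverts quasi-isomorphisms, hence factors uniquely through $\Hot^+(\cI_F)[\qis^{-1}]$. It then remains to prove essential surjectivity and full faithfulness.

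Essential surjectivity is immediate from \autoref{rem:replace-complex}: any $A^\cpx\in\Com^+(\cA)$ carries a quasi-isomorphism $A^\cpx\to I^\cpx$ with $I^\cpx\in\Com^+(\cI_F)$, so $A^\cpx\cong\iota_F(I^\cpx)$ already in $\cD^+(\cA)$.

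For full faithfulness I would \emph{not} try to imitate the injective case: there is no lifting property for an arbitrary $\cI_F$, so one cannot, as in \autoref{prop:lifting} and \autoref{prop:injectives-hot}, lift a morphism to a map between chosen adapted resolutions. Instead the argument must pass through the calculus of fractions, and the convenient choice is to represent morphisms by \emph{left} roofs $C^\cpx\to\hat C^\cpx\xleftarrow{s^\cpx}D^\cpx$ with $s^\cpx$ a quasi-isomorphism — the description matching a left-exact functor's habit of resolving ``to the right'', since such a roof is improved merely by post-composing its tip $\hat C^\cpx$ with an adapted resolution, no lifting required. Thus, for fullness, a morphism $I^\cpx\to J^\cpx$ of $\cD^+(\cA)$ between complexes of objects of $\cI_F$ is given by a left roof with $\hat C^\cpx\in\Com^+(\cA)$; choosing (\autoref{rem:replace-complex}) a quasi-isomorphism $\hat C^\cpx\to\hat I^\cpx$ with $\hat I^\cpx\in\Com^+(\cI_F)$ and post-composing produces a left roof $I^\cpx\to\hat I^\cpx\leftarrow J^\cpx$ with all terms in $\Com^+(\cI_F)$ and backward leg still a quasi-isomorphism, representing the same morphism but now manifestly coming from $\Hot^+(\cI_F)[\qis^{-1}]$. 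For faithfulness one argues symmetrically: a left roof of complexes of objects of $\cI_F$ whose image in $\cD^+(\cA)$ is $0$ is, by the calculus of left fractions in $\Hot^+(\cA)$, annihilated after post-composition with some quasi-isomorphism out of its tip; resolving the target of that quasi-isomorphism into $\Com^+(\cI_F)$ and composing shows the roof is already $0$ in $\Hot^+(\cI_F)[\qis^{-1}]$, using that a homotopy between maps of complexes of objects of $\cI_F$ is the same datum in $\Hot^+(\cI_F)$ and in $\Hot^+(\cA)$.

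The step I expect to be the real obstacle — and the place where the proof genuinely diverges from the injective case — is making the calculus of fractions available inside $\Hot^+(\cI_F)$, not just inside $\Hot^+(\cA)$. The mapping-cone and mapping-cocylinder constructions that verify the Ore conditions build complexes whose terms are finite direct sums of the given ones, so one is forced to assume $\cI_F$ closed under finite direct sums (a mild hypothesis: it holds for injectives and for the $F$-acyclic objects, and is frequently built into the definition of an adapted class). Granting it, $\Hot^+(\cI_F)$ is a full triangulated subcategory of $\Hot^+(\cA)$, the acyclic complexes form a thick subcategory, $\qis$ is the class of morphisms with acyclic cone, $\Hot^+(\cI_F)[\qis^{-1}]$ is the corresponding Verdier quotient, and \autoref{rem:replace-complex} supplies the resolution hypothesis; the asserted equivalence is then the promised instance of the general localisation result.
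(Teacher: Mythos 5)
Your proposal is correct, and since the paper states this proposition without proof there is nothing to compare it against line by line; what you have written is essentially the standard argument (cf.\ \cite[\S III.6.6 and Lem.~III.2.10]{Gelfand-Manin}), and it is the argument the paper implicitly has in mind. The two substantive choices you make are both the right ones. First, representing morphisms by co-roofs $I^\cpx \to \hat C^\cpx \xleftarrow{\sim} J^\cpx$ rather than by the roofs displayed in the paper's definition of $\Hom_{\cD(\cA)}$ is exactly what makes \autoref{rem:replace-complex} usable: an adapted resolution is a quasi-isomorphism \emph{out of} the tip, so it can be post-composed, whereas the paper's roofs would require a quasi-isomorphism from an adapted complex \emph{into} the tip, which the adapted-class axioms do not supply. (This switch is legitimate because $\qis$ is a two-sided localising class in $\Hot^+(\cA)$.) Second, your identification of the real gap --- that verifying the Ore conditions inside $\Hot^+(\cI_F)$ via cones and cylinders needs $\cI_F$ to be closed under finite direct sums --- is a genuine point: the paper's definition of an $F$-adapted class omits this, whereas the definition in \cite{Gelfand-Manin} builds it in precisely so that $\Hot^+(\cI_F)$ is a triangulated subcategory and the localisation has small, roof-described Hom-sets. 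One could alternatively close $\cI_F$ under finite direct sums without affecting either axiom, so this is a harmless repair rather than a flaw in the statement. Your fullness and faithfulness arguments (post-compose the tip with an adapted resolution; use the two-out-of-three property for quasi-isomorphisms and the fact that a null-homotopy between maps of complexes of $\cI_F$-objects already lives in $\Hot^+(\cI_F)$) are complete and correct.
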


Note that the inverse $\iota_F^{-1}$ replaces a complex $C^\cpx$ by an $F$-adapted resolution.

\begin{definition}
Let $F \colon \cA \to \cB$ be a left-exact functor, and let $\cI_F$ be an $F$-adapted class.
The \emph{right-derived functor} of $F$ is given by
\[
\R F \colon \cD^+(\cA) \to \cD^+(\cB),\ 
C^\cpx \mapsto F( \iota_F^{-1}(C^\cpx) ).
\]
\end{definition}

\begin{definition}
Let $F \colon \cA \to \cB$ be a left-exact functor.
By taking cohomology, we get induced functors 
\[
\R^i F \coloneqq \cH^i(\R F) \colon \cD^+(\cA) \xto{\R F} \cD^+(\cB) \xto{\cH^i} \cB
\]
which are called \emph{$i$-th right-derived functors} of $F$.

Moreover, we can precompose $\R^i F$ with $\cA \to \cD^+(\cA)$ and get induced functors on the abelian level, which we will denote by the same symbol.
\end{definition}

Note that the last step in the definition of $\R^i F$ is taking cohomology of the complex,
in particular, $\R^i F(A) = \R^0 F(A[i])$.
One can check that $\R^0 F$ and $F$ are naturally isomorphic.

\begin{remark}
\label{rem:derived-functor-unique}
The definition of the right-derived functor $\R F$ is based on the choice of an $F$-adapted class.
But one can show that different choices yield isomorphic derived functors,
as the derived functor can be characterised by a universal property.
See \cite[\S III.6.7]{Gelfand-Manin} for more details on this.
\end{remark}

\begin{example}
\label{ex:ext}
Let $\cA$ be an abelian category with enough injectives.
One common way to define $\Ext^i_\cA(C,D)$ is by using an injective resolution $I^\cpx$ of $D$:
\[
\Ext^i_\cA(C,D) \coloneqq \cH^i \Hom_\cA(C,I^\cpx)
\]
Hence we see that $\Ext^i_{\cA}(C,\blank)$ is the $i$-th right-derived functor of $\Hom_\cA(C,\blank)$.
Moreover, we get that 
\[
\Ext^i_\cA(C,D) = \Hom_{\cD(\cA)}(C,D[i]).
\]
\end{example}

\begin{remark}
Let $F \colon \cA \to \cB$ be a right-exact functor.
Since we have already given the definition of a right derived functor, we leave the proper definition of a \emph{left-derived functor} $\L F$ and $i$-th left-derived functor $\L_i F$ as an exercise to the reader.
\end{remark}

The following theorem finally tells us that (right-) derived functors preserve exactness in the derived sense.
As usual, there is also an analoguous statement for left-derived functors.

\begin{theorem}
\label{prop:derivedexact}
Let $F \colon \cA \to \cB$ be a left-exact functor between abelian categories, such that there is an $F$-adapted class in $\cA$.
Then the right-derived functor $\R F \colon \cD^+(\cA) \to \cD^+(\cB)$ is an exact functor.
\end{theorem}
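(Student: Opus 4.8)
The plan is to reduce the claim to the two defining properties of an exact functor between triangulated categories: compatibility with the shift, and preservation of distinguished triangles. Throughout I would work with the concrete model furnished by \autoref{thm:derived-functor}, namely the equivalence $\iota_F \colon \Hot^+(\cI_F)[\qis^{-1}] \to \cD^+(\cA)$; since $\R F$ is defined as $C^\cpx \mapsto F(\iota_F^{-1}(C^\cpx))$, it suffices to check that the functor $F \colon \Hot^+(\cI_F) \to \Hot^+(\cB)$, followed by the localization $\Hot^+(\cB) \to \cD^+(\cB)$, is exact, and that this is compatible with passing through $\iota_F^{-1}$. The first reduction I would make explicit is that $F$, being additive, takes a map of complexes to a map of complexes and homotopies to homotopies (since a homotopy is built out of the additive structure and the differentials), so $F$ descends to a well-defined functor on $\Hot^+$; it then passes to the localizations because $F$ sends quasi-isomorphisms \emph{between $F$-adapted complexes} to quasi-isomorphisms — this is exactly the content of the first bullet in the definition of an $F$-adapted class, applied after breaking a quasi-isomorphism into its mapping cone, which is acyclic.

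Next I would verify the shift compatibility. For a complex of adapted objects $I^\cpx$ one has $F(I^\cpx[1]) = F(I^\cpx)[1]$ on the nose, with the sign convention for the differential preserved because $F$ is additive; so $\R F \circ [1] \cong [1] \circ \R F$ follows once one notes that $\iota_F^{-1}$ also commutes with shifts up to canonical isomorphism (shifting an adapted resolution is an adapted resolution of the shift).

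The heart of the argument is preservation of distinguished triangles. Given an exact triangle in $\cD^+(\cA)$, by \textbf{TR1} and the definition of exact triangle I may assume it is isomorphic to the standard triangle $C^\cpx \xto{f^\cpx} D^\cpx \xto{j^\cpx} \Cone(f^\cpx) \xto{p^\cpx} C^\cpx[1]$ coming from an honest map of complexes; moreover, using $\iota_F^{-1}$ I may assume $C^\cpx, D^\cpx \in \Com^+(\cI_F)$ and $f^\cpx$ a map of adapted complexes — the one subtlety here is that $\Cone(f^\cpx)$ then also has adapted terms, since $\Cone^i(f^\cpx) = C^{i+1} \oplus D^i$ and an $F$-adapted class is closed under finite direct sums (or one replaces $\Cone(f^\cpx)$ by an adapted resolution and keeps track of the comparison quasi-isomorphism). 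Now the key computation is that $F$ commutes with the formation of mapping cones of honest maps of complexes: from the explicit formulas $\Cone^i(f^\cpx) = C^{i+1} \oplus D^i$ with differential $\left(\begin{smallmatrix} -d^{i+1}_{C^\cpx} & 0 \\ f^{i+1} & d^i_{D^\cpx} \end{smallmatrix}\right)$, additivity of $F$ gives a canonical isomorphism of complexes $F(\Cone(f^\cpx)) \cong \Cone(F(f^\cpx))$, compatible with the inclusion and projection maps $j^\cpx, p^\cpx$. Hence $F$ sends the standard triangle of $f^\cpx$ to the standard triangle of $F(f^\cpx)$, which is distinguished by definition in $\cD^+(\cB)$; transporting back along the isomorphisms and the equivalence $\iota_F$ yields that $\R F$ of the original triangle is distinguished.

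The step I expect to be the main obstacle is the bookkeeping in the reduction to adapted complexes: one must be careful that replacing the three vertices of a triangle by adapted resolutions can be done \emph{coherently}, i.e.\ compatibly with the maps of the triangle, so that after applying $F$ one still has an actual triangle and not merely three complexes with the right cohomology. This is where \autoref{rem:replace-complex} together with the lifting statement \autoref{rem:lifting-complex} (or rather its analogue for an arbitrary adapted class, which holds by the same argument) is needed; alternatively one invokes that $\iota_F$ is an \emph{equivalence} of categories — in particular full and faithful — so that the triangulated structure on $\Hot^+(\cI_F)[\qis^{-1}]$ transported from $\cD^+(\cA)$ has standard triangles of adapted maps of complexes as a cofinal family, which is precisely what the computation above exploits. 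Everything else — additivity bookkeeping, the sign conventions, the compatibility of $\iota_F$ with shift and cone — is routine.
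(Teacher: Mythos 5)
Your proposal is correct and follows essentially the same route as the paper's (very brief) proof, which likewise reduces everything to the two steps: exactness of the equivalence $\iota_F$ and its inverse, and exactness of $I^\cpx \mapsto F(I^\cpx)$ on $\Hot^+(\cI_F)[\qis^{-1}]$ via the adaptedness of $\cI_F$. You simply supply the details (descent of $F$ to the homotopy category, preservation of quasi-isomorphisms via acyclic cones, commutation with shift and mapping cone) that the paper leaves as indications.
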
 

\begin{proof}
The essential steps are 
\begin{itemize}
\item $\iota_F \colon \Hot^+(\cI_F)[\qis^{-1}] \to \cD^+(\cA)$ is exact and therefore $\iota_F^{-1}$ as well;
\item the functor $\Hot^+(\cI_F)[\qis^{-1}] \to \cD^+(\cB),\ I^\cpx \to F(I^\cpx)$ is also exact, which follows from the adaptedness of $\cI_F$. \qedhere
\end{itemize}
\end{proof}

Note that the combination of \autoref{prop:derivedexact} and \autoref{rem:longexact} gives again long exact sequences, \ie for an exact triangle $C^\cpx \to D^\cpx \to E^\cpx \to C^\cpx[1]$ and a right-derived functor $\R F$, there is the long exact sequence
\[
\cdots \to \R^i F(C^\cpx) \to \R^i F(D^\cpx) \to \R^i F(E^\cpx) \to \R^{i+1} F(C^\cpx) \to \cdots
\]

We close this section with a very important exact functor.

\begin{definition}
Let $\cA$ be a $\kk$-linear category such that $\Hom(A,B)$ is finite-dimensional for any two objects $A,B \in \cA$. An auto-equivalence $S \colon \cA \to \cA$ is called \emph{Serre functor} of $\cA$ if
for all $A,B \in \cA$ there is an isomorphism
\[
\eta_{A,B} \colon \Hom(A,B) \to \Hom(B,S A)^\vee
\]
of $\kk$-vector spaces, which is functorial in both $A$ and $B$.
\end{definition}

We note that without the assumption on the dimension of the $\Hom$-spaces, one runs into problems (as $V^{\vee\vee} \not\cong V$ if $V$ is an infinite-dimensional vector space).

\begin{proposition}
Let $\cA$ be a $\kk$-linear triangulated category with Serre functor $S$.
Then $S$ is an exact functor.
\end{proposition}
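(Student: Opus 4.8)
The plan is to show that $S$ commutes with the shift functor and preserves distinguished triangles, using only the defining functorial isomorphism $\eta_{A,B}\colon \Hom(A,B)\to\Hom(B,SA)^\vee$ together with the Yoneda lemma. The key observation is that a Serre functor is determined up to canonical isomorphism by its universal property, so any endofunctor that ``looks like'' $S$ on Hom-spaces must be isomorphic to $S$.

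First I would establish that $S\circ[1]\cong[1]\circ S$. For this, fix $B$ and compare the two functors $\Hom(\blank,S(A[1]))$ and $\Hom(\blank,(SA)[1])$ in the variable applied to objects of the form $B$; more precisely, I would use $\eta$ twice. We have natural isomorphisms
\[
\Hom(B,S(A[1]))^\vee\cong\Hom(A[1],B)\cong\Hom(A,B[-1])\cong\Hom(B[-1],SA)^\vee\cong\Hom(B,(SA)[1])^\vee,
\]
where the middle isomorphism is the (contravariant) shift bijection on Hom-spaces in the triangulated category $\cA$, and the first and last use $\eta$. Dualising the finite-dimensional $\kk$-vector spaces, we obtain a natural isomorphism $\Hom(B,S(A[1]))\cong\Hom(B,(SA)[1])$, functorial in $B$ (and in $A$). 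By the Yoneda lemma this descends to a functor isomorphism $S\circ[1]\cong[1]\circ S$; functoriality in $A$ ensures it is natural in $A$ as well.

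Next I would show $S$ sends exact triangles to exact triangles. Given an exact triangle $A\xto{u}B\xto{v}C\xto{w}A[1]$, complete $S(u)\colon SA\to SB$ to an exact triangle $SA\xto{S(u)}SB\to C'\to (SA)[1]$ using {\bf TR1}. I want to produce an isomorphism $SC\cong C'$ compatible with the maps. Apply $\Hom(SX,\blank)$ to the triangle $SA\to SB\to C'\to(SA)[1]$ and apply $\Hom(\blank,X)$ (contravariantly) to the original triangle; both yield long exact sequences since $\cH^\cpx$ is an exact functor on the derived-category side and, more generally, a cohomological functor arises from any exact triangle together with {\bf TR1}--{\bf TR3}. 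Using $\eta$ to identify $\Hom(SX,SY)^\vee\cong\Hom(Y,X)$ naturally in $X$ and $Y$, the long exact sequence $\Hom(SX,SA)\to\Hom(SX,SB)\to\Hom(SX,C')\to\cdots$ becomes, after dualising, identified in all but the $C'$-term with the dual of $\Hom(A,X)\gets\Hom(B,X)\gets\Hom(C,X)\gets\cdots$. Since the latter sequence, being the long exact sequence of the original triangle, coincides termwise with $\Hom(SX,SA)^\vee$ etc., the five-lemma forces $\Hom(SX,C')\cong\Hom(SX,SC)$, naturally in $X$; Yoneda then gives $C'\cong SC$, and one checks this isomorphism is compatible with the triangle maps by naturality of $\eta$ and a diagram chase.

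The main obstacle I anticipate is the bookkeeping in the second step: arranging the dualisation so that the long exact sequences line up with the correct variance, and then verifying that the Yoneda isomorphism $SC\cong C'$ actually fits into a morphism of triangles (not merely an abstract isomorphism of objects), which requires invoking naturality of $\eta$ in both arguments and a careful application of the five-lemma to the ladder of long exact sequences. The shift-compatibility step, by contrast, is essentially formal once one is comfortable chaining $\eta$ with the shift bijection on Hom-spaces.
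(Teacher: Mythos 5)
The paper states this result (due to Bondal and Kapranov) without proof, so your argument must stand on its own. Your first step is correct and complete: chaining $\eta$ with the shift bijection on Hom-spaces and dualising the finite-dimensional vector spaces gives a natural isomorphism $\Hom(B,S(A[1]))\cong\Hom(B,(SA)[1])$, and Yoneda yields $S\circ[1]\cong[1]\circ S$. That is exactly the standard argument.

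In the second step there are two problems. The lesser one is a variance slip: $\Hom(SX,SY)^\vee$ is not $\Hom(Y,X)$ but $\Hom(Y,SX)$ (combine full faithfulness of $S$ with $\eta_{X,Y}$). The identification you actually need is $\Hom(X,SY)\cong\Hom(Y,X)^\vee$ coming directly from $\eta_{Y,X}$; so you should apply $\Hom(X,\blank)$, not $\Hom(SX,\blank)$, to the auxiliary triangle $SA\xto{S(u)}SB\xto{j}C'\xto{p}SA[1]$, and $\Hom(\blank,X)$ to the original one. The more serious gap is the invocation of the five lemma: the five lemma does not \emph{produce} an isomorphism $\Hom(X,C')\cong\Hom(X,SC)$ from two long exact sequences that merely agree elsewhere; it certifies that an already given ladder map is an isomorphism, so you must first construct the comparison morphism. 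The standard device is to apply $\Hom(\blank,SC)$ to the auxiliary triangle: since $S(v)\circ S(u)=S(v\circ u)=0$, exactness of $\Hom(C',SC)\xto{j^*}\Hom(SB,SC)\xto{S(u)^*}\Hom(SA,SC)$ provides $\gamma\colon C'\to SC$ with $\gamma\circ j=S(v)$. Only then do the two long exact sequences form a commuting ladder (the squares away from $C'$ commute by naturality of $\eta$, the square involving $j$ by construction of $\gamma$, and the square involving $p$ needs a separate check), the five lemma shows $\gamma_*$ is bijective for every $X$, and Yoneda shows $\gamma$ is an isomorphism. Your closing remark correctly flags the compatibility with the third map, but this and the construction of $\gamma$ are the actual content of the step rather than an afterthought; with these repairs the argument is the standard proof.
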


\section{The derived category of coherent sheaves}

\noindent
From now on, we will specialise and consider the abelian category $\coh(X)$ of coherent sheaves on a noetherian scheme $X$ over a field $\kk$.
We denote the derived category of $\coh(X)$ by $\Db(X) \coloneqq \cD^b(\coh(X))$.

\subsection{Deriving left-exact functors}
\label{sec:deriveleft}

\AIM{
In this section, we discuss the most prominent left-exact functors in algebraic geometry: $\Hom$, push-forward and sheaf $\SHom$.
}

\begin{theorem}
Let $X$ be a noetherian scheme.
Then there are enough injective objects in the category of quasi-coherent sheaves $\qcoh(X)$.
\end{theorem}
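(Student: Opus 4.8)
The plan is to establish the existence of enough injectives in $\qcoh(X)$ by reducing to the affine case and then globalising via a Godement-type construction. First I would recall the classical fact for modules: over any ring $R$, the category of $R$-modules has enough injectives, since $\ZZ$-modules (abelian groups) do --- every abelian group embeds into a divisible group --- and the functor $M \mapsto \Hom_{\ZZ}(R, M)$ is right adjoint to the forgetful functor and sends injective $\ZZ$-modules to injective $R$-modules, so any $R$-module embeds into $\Hom_\ZZ(R, I)$ for $I$ an injective hull of its underlying abelian group. Since $X$ is noetherian, a quasi-coherent sheaf on an affine open $\Spec R$ corresponds to an $R$-module, and one checks that for $R$ noetherian the sheaf associated to an injective $R$-module is an injective object in $\qcoh(\Spec R)$ (here noetherianness is used so that localisation preserves injectivity of modules).

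Next I would globalise. Given $\cF \in \qcoh(X)$, choose a finite affine open cover $X = \bigcup_{i} U_i$ with $j_i \colon U_i \into X$ the inclusions. On each $U_i$ the restriction $\cF|_{U_i}$ embeds into an injective quasi-coherent sheaf $\cI_i$ on $U_i$. Then form $\cI \coloneqq \bigoplus_i (j_i)_* \cI_i$, and the natural map $\cF \to \bigoplus_i (j_i)_*(\cF|_{U_i}) \to \cI$ is a monomorphism because it is so on each $U_i$. The key point is that $\cI$ is an injective object of $\qcoh(X)$: the functor $(j_i)_*$ is right adjoint to the exact restriction functor $j_i^*$, hence $(j_i)_*$ preserves injectives; and one needs that $(j_i)_*$ sends quasi-coherent sheaves to quasi-coherent sheaves, which holds since $X$ is noetherian (so the $U_i \into X$ are quasi-compact quasi-separated morphisms). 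A finite direct sum of injectives is injective, so $\cI$ works.

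The main obstacle --- or at least the point requiring the most care --- is the claim that the sheafification of an injective module over a noetherian ring $R$ remains injective in $\qcoh(\Spec R)$, and dually that $(j_i)_*$ preserves injectivity on the level of quasi-coherent sheaves rather than merely on all $\cO_X$-modules. Both hinge on having an \emph{exact} left adjoint: for the affine statement one uses that the global-sections functor $\Gamma(\Spec R, -) \colon \qcoh(\Spec R) \to R\hh\Mod$ has the exact left adjoint $M \mapsto \widetilde{M}$; for the globalisation one uses $j_i^* \dashv (j_i)_*$ with $j_i^*$ exact. I would state these adjunctions, note that a right adjoint to an exact functor preserves injectives, and verify quasi-coherence of the pushforwards --- all of which are standard under the noetherian hypothesis --- and then assemble the monomorphism $\cF \into \cI$ as above to conclude.
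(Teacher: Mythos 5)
The paper states this theorem without proof, treating it as a standard fact, so there is nothing internal to compare against; judged on its own, your argument is the standard one and is essentially correct. You reduce to the affine case and then globalise by pushing forward along a finite affine cover: each $(j_i)_*$ preserves quasi-coherence (since $j_i$ is quasi-compact and separated, which the noetherian hypothesis guarantees) and preserves injectives (being right adjoint to the exact restriction $j_i^*$), a finite direct sum of injectives is injective, and the map $\cF \to \bigoplus_i (j_i)_*(\cF|_{U_i}) \into \cI$ is a monomorphism because its restriction to each $U_i$ contains the monomorphism $\cF|_{U_i} \into \cI_i$ as a component. One small correction: in the affine step you do not need noetherianness of $R$ to see that $\widetilde{I}$ is injective in $\qcoh(\Spec R)$ --- the functor $M \mapsto \widetilde{M}$ is an \emph{equivalence} $R\hMod \isomtext \qcoh(\Spec R)$, and equivalences preserve injective objects outright. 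The fact you cite (localisation preserves injective modules over a noetherian ring) is what one needs for the stronger statement that $\widetilde{I}$ is injective, or at least flasque, in the category of all $\cO_X$-modules, which is relevant for computing sheaf cohomology via such resolutions but is not required here. Where noetherianness genuinely enters your argument is in producing a \emph{finite} affine cover (quasi-compactness) and in the quasi-coherence of the pushforwards; with that understood, the proof is complete.
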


We remark that injective sheaves are hardly finitely generated.
Actually, for our applications this is only a minor technical issue.

\begin{proposition}
Let $X$ be a noetherian scheme. Then the inclusion functor
\[
\Db(X) \to \Db(\qcoh(X))
\]
induces an equivalence of $\Db(X)$ with $\Db_{\coh}(\qcoh(X))$,
the derived category of complexes of quasi-coherent sheaves with bounded cohomology.
\end{proposition}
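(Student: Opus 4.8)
The plan is to build a quasi-inverse to the inclusion $\Db(X) \to \Db_{\coh}(\qcoh(X))$ by showing that every complex of quasi-coherent sheaves with bounded coherent cohomology is isomorphic, in the derived category, to a bounded complex of coherent sheaves. Concretely, I would first reduce to the following two claims: (a) the inclusion is fully faithful; (b) it is essentially surjective onto $\Db_{\coh}(\qcoh(X))$. For full faithfulness, one observes that $\coh(X) \subset \qcoh(X)$ is a full abelian subcategory closed under kernels, cokernels and extensions, and then invokes the standard criterion (a special case of the comparison of $\Db$ of an abelian category with the subcategory of complexes with cohomology in a Serre subcategory): it suffices that for any coherent $\cF$ and any quasi-coherent $\cG$ with a surjection $\cG \onto \cF$, there is a coherent subsheaf $\cF' \subseteq \cG$ still surjecting onto $\cF$; this holds because $\cG$ is the filtered union of its coherent subsheaves and $\cF$ is finitely generated, so one of them already surjects.

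The heart of the argument is essential surjectivity, and this is where I expect the main obstacle to lie. Given $C^\cpx \in \Db_{\coh}(\qcoh(X))$ with cohomology in degrees $[a,b]$, I would truncate to assume $C^i = 0$ outside $[a,b]$ (using the canonical truncation functors $\tau_{\leq b}$ and $\tau_{\geq a}$, which change the complex only by a quasi-isomorphism and leave all cohomology coherent). Now I want to replace $C^\cpx$ degreewise by coherent sheaves. The key lemma is: on a noetherian scheme, every quasi-coherent sheaf is the filtered colimit of its coherent subsheaves. Using this, I would inductively construct, starting from the top degree $b$ and working downwards, a bounded complex $\cF^\cpx$ of coherent sheaves together with a quasi-isomorphism $\cF^\cpx \to C^\cpx$: at each stage one chooses a coherent subsheaf of $C^i$ large enough to surject onto $\cH^i$ and to contain the image of the already-chosen $\cF^{i+1}$, exploiting that cohomology is coherent and that finitely generated pieces of a colimit already see any given finitely generated subobject. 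The downward induction terminates after finitely many steps because $C^\cpx$ is bounded, yielding a bounded $\cF^\cpx$; checking that the resulting map induces isomorphisms on all cohomology is a diagram chase using the long exact sequence and the five lemma.

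The main technical obstacle is precisely this inductive construction: one must choose the coherent subsheaves compatibly so that the $\cF^i$ actually form a subcomplex (i.e.\ $d^i(\cF^i) \subseteq \cF^{i+1}$) while simultaneously hitting all of each cohomology group; the bookkeeping requires enlarging previously-chosen subsheaves, so the induction is best phrased as building $\cF^\cpx$ from the top down rather than naively degree by degree. A cleaner packaging, which I would use if space permits, is to cite the general statement that for a noetherian abelian category $\cA$ with a Serre subcategory $\cB$ such that $\cB \into \cA$ satisfies the above colimit-approximation property, the natural functor $\Db(\cB) \to \Db_{\cB}(\cA)$ is an equivalence; the present proposition is then the instance $\cB = \coh(X)$, $\cA = \qcoh(X)$, with the colimit-approximation property being exactly noetherianity of $X$.
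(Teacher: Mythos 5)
The paper states this proposition without proof, so there is nothing to compare your argument against; judged on its own, your sketch is the standard one (essentially \cite[Prop.~3.5]{Huybrechts}, or the general lemma on comparing $\Db(\cB)$ with $\Db_{\cB}(\cA)$ for a Serre subcategory $\cB \subset \cA$ satisfying the lifting-of-surjections condition), and both halves --- full faithfulness via the criterion on surjections onto coherent sheaves, and essential surjectivity via canonical truncation followed by a descending construction of a coherent subcomplex --- are the right ingredients, with noetherianity entering exactly where you say it does. One local correction: in the inductive step you ask that the coherent subsheaf $\cF^i \subseteq C^i$ ``contain the image of the already-chosen $\cF^{i+1}$'', but the differential goes from degree $i$ to degree $i+1$, so nothing maps from $\cF^{i+1}$ into $C^i$. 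The two conditions you actually need are: (i) $\cF^i$ contains a coherent lift under $d^i$ of $\cF^{i+1} \cap \im(d^i)$ (which exists by your lifting-of-surjections observation applied to $(d^i)^{-1}(\cF^{i+1}\cap\im d^i) \onto \cF^{i+1}\cap\im d^i$), ensuring that $\cH^{i+1}(\cF^\cpx) \to \cH^{i+1}(C^\cpx)$ is injective, together with a coherent subsheaf of $\ker d^i$ surjecting onto $\cH^i(C^\cpx)$; and (ii) $\cF^{i+1}$ is then enlarged by $d^i(\cF^i)$ so that the $\cF^j$ form a subcomplex. This enlargement is harmless because one only adds cocycles lying in $\im(d^i)$, so neither the subcomplex property in higher degrees nor the isomorphisms on cohomology already established there are disturbed. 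With that repair the bookkeeping you flag goes through, and your proof is complete at the level of detail the paper itself employs elsewhere.
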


The following proposition will allow us to restrict derived functors to the bounded derived category of coherent sheaves.

%

\begin{proposition}
\label{prop:restrict-Db}
Let $X$ and $Y$ be schemes and $F \colon \qcoh(X) \to \qcoh(Y)$ be a left-exact functor.
Assume that there is an $F$-adapted class in $\qcoh(X)$.

If for any $\cF \in \coh(X)$ holds $\R F(\cF) \in \Db(Y)$, then the right-derived functor of $F$ restricts to
\[
\R F \colon \Db(X) \to \Db(Y).
\]
\end{proposition}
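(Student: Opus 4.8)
The plan is to reduce the statement about general objects of $\Db(X)$ to the hypothesis, which concerns only single coherent sheaves, by an induction on the length of the cohomology of a complex, using the long exact sequences coming from truncation triangles together with the exactness of $\R F$ established in \autoref{prop:derivedexact}. First I would recall that $\R F \colon \cD^+(\qcoh(X)) \to \cD^+(\qcoh(Y))$ already exists by the hypothesis that an $F$-adapted class is available; the only thing to check is that its restriction to $\Db(X) = \Db(\coh(X))$ lands in $\Db(Y)$. Since $\Db(X)$ consists of complexes with bounded, coherent cohomology (using the identification $\Db(X) \cong \Db_{\coh}(\qcoh(X))$ from the preceding proposition, so that we may compute inside $\qcoh$), it suffices to prove that $\R F(\cF^\cpx)$ has bounded cohomology — coherence of the cohomology, if part of the target convention, follows in the same way provided one knows each $\R^i F(\cF)$ is coherent for $\cF$ coherent, which is implied by $\R F(\cF) \in \Db(Y)$.

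The key steps, in order: (1) Fix $\cF^\cpx \in \Db(X)$ with nonzero cohomology in degrees $[a,b]$. If $a = b$, then $\cF^\cpx$ is quasi-isomorphic to a single coherent sheaf (shifted), and the conclusion is exactly the hypothesis $\R F(\cF) \in \Db(Y)$ together with $\R F(\cF[n]) = \R F(\cF)[n]$. (2) For the inductive step, use the truncation triangle
\[
\tau_{\le a}\cF^\cpx \to \cF^\cpx \to \tau_{\ge a+1}\cF^\cpx \to (\tau_{\le a}\cF^\cpx)[1],
\]
where $\tau_{\le a}\cF^\cpx \simeq \cH^a(\cF^\cpx)[-a]$ is a shift of a coherent sheaf and $\tau_{\ge a+1}\cF^\cpx$ has cohomology concentrated in the strictly shorter range $[a+1,b]$. (3) Apply the exact functor $\R F$ to get an exact triangle in $\cD^+(Y)$; by the induction hypothesis the outer two terms have bounded cohomology (for $\tau_{\le a}\cF^\cpx$ this is the hypothesis applied to the coherent sheaf $\cH^a(\cF^\cpx)$, suitably shifted), and then the long exact cohomology sequence of \autoref{rem:longexact} forces $\R F(\cF^\cpx)$ to have bounded cohomology as well, being squeezed between two bounded complexes. (4) Conclude that $\R F(\cF^\cpx) \in \Db(Y)$.

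The main obstacle is item (2): one needs truncation functors on $\cD^+(\qcoh(X))$ and the fact that $\tau_{\le a}\cF^\cpx$ is genuinely represented, up to quasi-isomorphism, by the single sheaf $\cH^a(\cF^\cpx)$ placed in degree $a$ — this uses that $\cF^\cpx$ has its lowest cohomology in degree $a$, so $\tau_{\le a}$ of it is a shifted sheaf, not merely a complex with one-step cohomology. Granting standard facts about the $t$-structure on $\cD^+$ (which are implicit in the machinery of \autoref{rem:longexact} and the triangulated structure already developed), the argument is a clean dévissage; the only genuinely content-bearing input is the hypothesis on coherent sheaves, and everything else is the bounded-cohomology bookkeeping propagated through exact triangles. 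One should also note that boundedness below is automatic since $\R F$ lands in $\cD^+$ and truncation only improves matters, so the real work is purely the upper bound, handled by the finitely many inductive steps above.
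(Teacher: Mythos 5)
Your argument is correct, but it takes a genuinely different route from the paper's. You run a d\'evissage on the length of the cohomology range: truncation triangles $\tau_{\le a}\cF^\cpx \to \cF^\cpx \to \tau_{\ge a+1}\cF^\cpx \to (\tau_{\le a}\cF^\cpx)[1]$ peel off one coherent cohomology sheaf at a time, the exactness of $\R F$ (\autoref{prop:derivedexact}) turns these into exact triangles downstairs, and the long exact sequence of \autoref{rem:longexact} propagates boundedness (and coherence) of cohomology from the two outer terms to the middle one. The paper instead works entirely at the level of resolutions: it picks an adapted resolution $I^\cpx$ of the bounded complex, truncates it at $\ker(d^n)$ so as to stay quasi-isomorphic, observes that $\ker(d^n)$ is a coherent sheaf to which the hypothesis applies, and splices in a bounded adapted resolution $J^\cpx$ of that kernel to produce a single \emph{bounded} adapted resolution of $\cF^\cpx$; applying $F$ to it then directly exhibits $\R F(\cF^\cpx)$ as a bounded complex. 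Your approach is conceptually cleaner and treats boundedness and coherence of the cohomology in one uniform induction, at the cost of invoking truncation functors and the associated $t$-structure formalism, which the paper has not set up (though your appeal to them is legitimate and standard). The paper's construction stays strictly within the adapted-resolution machinery it has developed, and produces an explicit bounded adapted representative, which is also convenient for the dual statement about right-exact functors that the paper intends to use. Both arguments hinge on the same single content-bearing input, namely the hypothesis $\R F(\cF) \in \Db(Y)$ for coherent $\cF$ --- applied in your case to each cohomology sheaf, and in the paper's case to the kernel of the differential where the truncation is performed.
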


We have formulated this proposition using an adapted class (even though there are enough injective sheaves), because we will also use the dual statement for right-exact functors.

\begin{proof}
Let $\cF^\cpx \in \Db(X)$ be a bounded complex with $\cF^i = 0$ for $i > n$.
Choose some adapted resolution $\cF^\cpx \to I^\cpx = [\cdots 0 \to I^m \to I^{m+1} \to \cdots]$, which might be in $\cD^+(X)$.
Then its truncation complex
\[
I^{\leq n} \colon \quad [\cdots \to 0 \to I^m \to \cdots \to I^{n-1} \xto{d^n} I^n \to \ker(d^n) \to 0 \to \cdots]
\]
is still quasi-isomorphic to $\cF^\cpx$, but $\ker(d^n) \in \coh(X)$ will not be $F$-acyclic in general.
Nevertheless, by assumption $\R F(\ker(d^n)) \in \Db(Y)$, so we can replace $\ker(d^n)$ by some bounded adapted resolution $J^\cpx$.
One can check that $I^{\leq n}$ and $J^\cpx$ fit together to form a bounded adapted resolution of $F^\cpx$.
\end{proof}

\subsection*{Inner homomorphisms}
Given a quasi-coherent sheaf $\cF \in \qcoh(X)$ on a noetherian scheme $X$ over a field $\kk$,
there is the left-exact functor 
\[
\Hom(\cF,\blank) \colon \qcoh(X) \to \kk\hMod = \qcoh(\Spec \kk).
\]
Since $X$ is noetherian, there are enough injectives in $\qcoh(X)$ and we get
\[
\R\Hom(\cF,\blank) \colon \cD^+(\qcoh(X)) \to \cD^+(\kk\hMod)
\]
Actually, this functor can be extended to complexes $\cF^\cpx \in \Com^-(\qcoh(X))$.

\begin{proposition}
Let $X$ be a smooth and proper variety
and $\cF^\cpx \in \Db(X)$.
Then $\R\Hom(\cF^\cpx,\blank)$ restricts to a functor
\[
\R\Hom(\cF^\cpx,\blank) \colon \Db(X) \to \Db(\kk\hmod).
\]
\end{proposition}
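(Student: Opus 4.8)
The plan is to reduce the statement by dévissage to the case where both arguments are single coherent sheaves, and then to treat that case by replacing the first argument with a finite locally free resolution, so that the finiteness and vanishing theorems for sheaf cohomology on a proper variety can finish the argument. First I would record that $\Db(\kk\hmod)$ sits inside $\Db(\kk\hMod)$ as a full triangulated subcategory: it is stable under shifts, and stable under cones because in the long exact cohomology sequence of a triangle (\autoref{rem:longexact}) both finite-dimensionality and eventual vanishing of the cohomology groups are inherited by the middle term. Since $\R\Hom(\cF^\cpx,\blank)$ is an exact functor by \autoref{prop:derivedexact}, and $\R\Hom(\blank,\cG^\cpx)$ is exact in the contravariant sense as well, the distinguished triangles furnished by stupid truncation of $\cG^\cpx$, and then of $\cF^\cpx$, let one peel off one coherent sheaf (placed in a single degree) at a time; boundedness of the two complexes makes this terminate after finitely many steps, and one may work throughout inside $\qcoh(X)$ using the equivalence $\Db(X)\cong\Db_{\coh}(\qcoh(X))$. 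This reduces the claim to showing $\R\Hom(\cF,\cG)\in\Db(\kk\hmod)$ for two coherent sheaves $\cF,\cG$ on $X$, in the spirit of \autoref{prop:restrict-Db}.

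For this remaining case I would use smoothness. Writing $n:=\dim X$, regularity of $X$ lets one choose a finite locally free resolution $E^\cpx\to\cF$ with $E^\cpx$ a bounded complex of locally free sheaves concentrated in degrees $[-m,0]$, $m\le n$. As $E^\cpx\to\cF$ is a quasi-isomorphism, $\R\Hom(\cF,\cG)\cong\R\Hom(E^\cpx,\cG)$, and stupid truncation of $E^\cpx$ exhibits the right-hand side as built from finitely many cones of the objects $\R\Hom(E^{-i},\cG)$, $0\le i\le m$. For a finite-rank locally free sheaf $E$ one has the natural identification of functors $\Hom(E,\blank)=\Hom(\cO_X,E^\vee\otimes\blank)$; since $E^\vee\otimes\blank$ is exact and carries injective quasi-coherent sheaves to injective ones, deriving this identity gives $\R\Hom(E^{-i},\cG)\cong\R\Hom(\cO_X,(E^{-i})^\vee\otimes\cG)$, that is, the derived global sections of the coherent sheaf $(E^{-i})^\vee\otimes\cG$.

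It then remains to invoke the two standard cohomological theorems for the proper variety $X$: for a coherent sheaf $\cH$ the groups $H^p(X,\cH)$ are finite-dimensional over $\kk$, and vanish for $p>n$. Hence each $\R\Hom(E^{-i},\cG)$ lies in $\Db(\kk\hmod)$ with cohomology concentrated in degrees $[0,n]$, and assembling the finitely many cones keeps us inside the triangulated subcategory $\Db(\kk\hmod)$, with total amplitude at most $m+n\le 2n$. I expect all the genuine content to sit in this last case: the reductions are formal, whereas the single-sheaf step rests on the interplay of \emph{smoothness} (to replace $\cF$ by a bounded locally free complex) and \emph{properness} (to make sheaf cohomology finite-dimensional and bounded) — it is properness, not mere finite type, that is indispensable. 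A slicker variant of the same core step, which sidesteps global locally free resolutions, runs through the derived sheaf-$\SHom$: regularity bounds the coherent local $\mathrm{Ext}$-sheaves (they vanish above degree $n$), and the local-to-global spectral sequence computing $\Ext^i(\cF,\cG)$ then yields their finiteness and vanishing all at once.
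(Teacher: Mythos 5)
Your argument is correct, and it is worth comparing with the paper's, which is shorter but less self-contained. The paper disposes of the proposition in one line: it factors $\R\Hom(\cF^\cpx,\blank)=\R\Gamma\circ\R\SHom(\cF^\cpx,\blank)$ via \autoref{prop:local-global} and then quotes the two neighbouring propositions, namely that $\R\SHom(\cF^\cpx,\blank)$ preserves $\Db(X)$ (this is where smoothness enters) and that $\R\Gamma=\R\pi_*$ for the proper structure map lands in $\Db(\kk\hmod)$ (this is where properness enters). Your main line reaches the same conclusion by hand: d\'evissage of both arguments to single coherent sheaves, replacement of $\cF$ by a bounded locally free complex $E^\cpx$ using regularity, the identification $\R\Hom(E^{-i},\cG)\cong\R\Gamma\bigl((E^{-i})^\vee\otimes\cG\bigr)$, and finally finiteness plus Grothendieck vanishing of coherent cohomology. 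The division of labour between smoothness and properness is identical; what your version buys is independence from the composition theorem for derived functors and from the $\R\SHom$ machinery, at the price of invoking the existence of \emph{global} finite locally free resolutions (a fact the paper also uses, for the tensor product; it holds for smooth separated varieties but deserves a citation, since it is not purely local). The ``slicker variant'' you sketch at the end, through $\R\SHom$ and the local-to-global spectral sequence, is precisely the paper's route. One optional simplification: once $\cF$ is replaced by $E^\cpx$, the d\'evissage of the second argument is not really needed, since $(E^{-i})^\vee\otimes\cG^\cpx$ is already a bounded complex of coherent sheaves to which the restriction statement for $\R\Gamma$ applies directly.
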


This proposition follows from $\R \Hom(\cF^\cpx,\blank) = \R \Gamma \circ \R \SHom(\cF^\cpx,\blank)$, see \autoref{prop:local-global}, and the corresponding statements for $\R \Gamma$ and $\R \SHom$.

\begin{example}
Let $C$ be a projective curve with a singular point $x$.
Then for the skyscraper sheaf $\kk(x)$ one can show that
$\Ext^i(\kk(x),\kk(x)) \neq 0$ for all $i>0$.
In particular, the image of $\R \Hom(\kk(x),\blank)$ is not contained in $\Db(\kk\hmod)$.
\end{example}

\subsection*{Push-forward}
Let $f \colon X \to Y$ be a morphism of noetherian schemes, which induces the left-exact \emph{push-forward} functor (or \emph{direct image})
\[
f_* \colon \qcoh(X) \to \qcoh(Y).
\]
As $X$ is noetherian, $\qcoh(X)$ has enough injectives, so $f_*$ gives the right-derived functor
\[
\R f_* \colon \cD^+ (\qcoh(X)) \to \cD^+(\qcoh(Y)).
\]
The $\R^i f_* = \cH^i\R f_*$ are also known as \emph{higher direct images} of $f$.

For a noetherian scheme $X$ over a field $\kk$, the push-forward of the structure map $\pi\colon X \to \Spec \kk$ is taking global sections, so $\Gamma = \pi_*$ in this case.
Moreover, for a sheaf $\cF$ we find that its cohomology groups are therefore $H^i(X,\cF) = \R^i \pi_*(\cF)$.

\begin{proposition}
Let $f \colon X \to Y$ be a morphism of noetherian schemes and $\cF \in \qcoh(X)$. Then the derived push-forward restricts to
\[
\R f_*\colon \Db(\qcoh(X)) \to \Db(\qcoh(Y)).
\]
If $f$ is in addition a proper morphism, then $\R f_*$ restricts further to
\[
\R f_*\colon \Db(X) \to \Db(Y).
\]
\end{proposition}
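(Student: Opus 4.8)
The strategy is to combine \autoref{prop:restrict-Db} with two classical cohomological facts. The first statement (restriction to $\Db(\qcoh(X))$) and the second (restriction to $\Db(X)$ when $f$ is proper) will be handled in one stroke by checking the single hypothesis of \autoref{prop:restrict-Db}, namely that $\R f_*(\cF)$ lies in the bounded derived category for any coherent (resp.\ quasi-coherent) $\cF$. Once boundedness and coherence of the cohomology sheaves are known on objects of the abelian category, the proposition upgrades this automatically to bounded complexes.

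First I would establish boundedness. Since $X$ is noetherian, it has finite Krull dimension $n = \dim X$, and a Grothendieck-type vanishing theorem gives $\R^i f_*(\cF) = 0$ for $i > n$ and any $\cF \in \qcoh(X)$; one reduces to the affine base case, where $\R^i f_* = \cH^i$ of the derived sections on preimages of affines, and invokes Grothendieck vanishing for sheaf cohomology on a noetherian space of dimension $\le n$. Combined with left-exactness (so $\R^i f_* = 0$ for $i<0$), this shows $\R f_*(\cF) \in \Db(\qcoh(Y))$, and then \autoref{prop:restrict-Db} applied to $F = f_*$ yields the first claim $\R f_* \colon \Db(\qcoh(X)) \to \Db(\qcoh(Y))$.

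For the second claim I would additionally need coherence of the higher direct images when $f$ is proper. This is the substantive input: for $f$ proper and $\cF \in \coh(X)$, each $\R^i f_*(\cF)$ is again coherent. This is a standard finiteness theorem (proved via Chow's lemma to reduce to the projective case, then via a Serre-type computation with twisting sheaves and d\'evissage); I would cite it as a known result in the spirit of the excerpt's conventions rather than reprove it. Granting it, for $\cF \in \coh(X)$ the complex $\R f_*(\cF)$ has bounded, coherent cohomology, hence lies in $\Db(Y) = \Db(\coh(Y))$ after identifying the latter with $\Db_{\coh}(\qcoh(Y))$ as in the earlier proposition. Invoking \autoref{prop:restrict-Db} once more — this time tracking that the adapted resolutions can be taken with coherent cohomology — gives $\R f_* \colon \Db(X) \to \Db(Y)$.

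The main obstacle is the coherence statement for proper $f$: Grothendieck vanishing for boundedness is comparatively soft, but finite generation of the higher direct images genuinely uses properness and is where all the geometric content sits. In these notes it is appropriate to quote it; the only thing one must be careful about is that \autoref{prop:restrict-Db} as stated produces a bounded \emph{adapted} resolution, so to conclude membership in $\Db(X)$ rather than merely $\Db(\qcoh(Y))$ one passes through the equivalence $\Db(X) \simeq \Db_{\coh}(\qcoh(X))$ and checks that the relevant kernel sheaves $\ker(d^n)$ appearing in the truncation argument are coherent — which they are, being kernels of maps of coherent sheaves.
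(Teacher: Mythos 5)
Your proposal matches the paper's proof: both reduce the statement via \autoref{prop:restrict-Db} to the two deep theorems that $\R^i f_*\cF = 0$ for $i > \dim(X)$ and that higher direct images of coherent sheaves under proper morphisms are coherent. The extra care you take with the coherence of the kernel sheaves in the truncation argument is a reasonable elaboration of a point the paper leaves implicit.
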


\begin{proof}

Using \autoref{prop:restrict-Db}, the statements can be reduced to the following (deep) theorems:
\begin{itemize}
\item if $\cF \in \qcoh(X)$ then $\R^i f_* \cF$ $X$ are trivial for $i> \dim(X)$;
\item if $f$ is proper and $\cF$ coherent, then all $\R^i f_* \cF$ are coherent.\qedhere
\end{itemize}
\end{proof}

\subsection*{Local homomorphims} 
Let $X$ be a noetherian scheme and $\cF \in \qcoh(X)$.
Then there is the left-exact functor
\[
\SHom(\cF,\blank) \colon \qcoh(X) \to \qcoh(X)
\]
which induces the derived functor
\[
\R\SHom(\cF,\blank) \colon \cD^+(\qcoh(X)) \to \cD^+(\qcoh(X)).
\]
Like in the case of $\Hom$, the sheaf $\cF$ can be replaced by a bounded above complex.

\begin{proposition}
Let $X$ be a smooth and proper variety and $\cF^\cpx \in \Db(X)$.
Then $\R\SHom(\cF^\cpx,\blank)$ restricts to a functor
\[
\R\SHom(\cF^\cpx,\blank) \colon \Db(X) \to \Db(X).
\]
\end{proposition}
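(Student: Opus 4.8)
The plan is to reduce to the case where $\cF^\cpx$ is a bounded complex of locally free sheaves of finite rank, and then to compute $\R\SHom$ naively. The first step I would record is that smoothness of $X$ makes this reduction possible: at each point $x$ the ring $\cO_{X,x}$ is regular, hence of global dimension at most $\dim X$, so any locally free resolution of a coherent sheaf can be broken off after $\dim X$ steps without leaving the class of locally free sheaves --- the last syzygy has projective dimension $0$ at every point and, being coherent with free stalks on a noetherian scheme, is locally free. Globalising (a variety carries enough locally free sheaves) and splicing these finite resolutions together degreewise, one obtains that every $\cF^\cpx \in \Db(X)$ is isomorphic in $\Db(X)$ to a bounded complex $E^\cpx$ of locally free sheaves of finite rank.

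Having replaced $\cF^\cpx$ by such an $E^\cpx$, I would next use that $\SHom(E,\blank)$ is exact for $E$ locally free, so that the class of finite-rank locally free sheaves is adapted for the (contravariant) functor $\SHom(\blank,\cG)$ and no resolution is needed in the first argument: for any complex $\cG^\cpx$ one has $\R\SHom(\cF^\cpx,\cG^\cpx) \cong \SHom^\cpx(E^\cpx,\cG^\cpx)$, the total complex of the double complex with entries $\SHom(E^i,\cG^j)$. Taking for $\cG^\cpx$ an honest bounded complex of coherent sheaves --- possible since $\cG^\cpx \in \Db(X)$ --- this total complex is bounded, because $E^\cpx$ and $\cG^\cpx$ are, and each of its terms is a finite direct sum of sheaves $\SHom(E^i,\cG^j) \cong (E^i)^\vee \otimes \cG^j$, which are coherent. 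Hence $\R\SHom(\cF^\cpx,\cG^\cpx) \in \Db(X)$.

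A variant that avoids invoking the global resolution property would be to perform a d\'evissage on $\cF^\cpx$ instead: writing it as an iterated extension of its shifted terms $\cF^i[-i]$ and using that $\R\SHom(\blank,\cG^\cpx)$ is exact and that $\Db(X)$ is closed under cones inside $\cD^+(X)$, one reduces to the case $\cF^\cpx = \cF$ a single coherent sheaf. Then $\SHom(\cF,\blank)\colon\qcoh(X)\to\qcoh(X)$ is an honest left-exact functor, and \autoref{prop:restrict-Db} applies as soon as one knows $\R\SHom(\cF,\cG)\in\Db(X)$ for every coherent $\cG$ --- which is again the computation of the previous paragraph, now with a finite locally free resolution $E^\cpx\to\cF$.

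I expect the only real content to be the boundedness, i.e.\ the input that smoothness of $X$ forces coherent sheaves to have finite homological dimension; this is the Auslander--Buchsbaum--Serre theorem on regularity together with the (for a variety harmless) fact that the pointwise locally free resolutions glue to a global finite one. The remaining ingredients --- coherence of the $\SHom$-sheaves and the identification of $\R\SHom$ over a locally free resolution with the naive total complex --- are formal.
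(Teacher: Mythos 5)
Your argument is correct and is exactly the route the paper has in mind: the paper states this proposition without proof, but for the parallel statement about $\Ltensor$ it explicitly invokes the same key input you use, namely that on a smooth variety every object of $\Db(X)$ is quasi-isomorphic to a bounded complex of locally free sheaves of length at most $\dim(X)$, after which the naive total complex $\SHom^\cpx(E^\cpx,\cG^\cpx)$ with coherent terms $(E^i)^\vee\otimes\cG^j$ settles the claim. Your observation that properness plays no role here (it is only needed for $\R\Gamma$, hence for $\R\Hom$) is also accurate.
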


\begin{definition}
Let $X$ be a smooth projective variety and $\cF^\cpx \in \Db(X)$.
Then the \emph{dual} of $\cF^\cpx$ is
$\cF^\cpx{}^\vee \coloneqq \R\SHom(\cF^\cpx,\cO_X) \in \Db(X)$.
\end{definition}

\subsection{Deriving right-exact functors}
\label{sec:deriveright}

\AIM{
In this section, we discussion of the most prominent right-exact functors: tensor and pull-back.
}

\begin{remark}
Let $X = \PP^1$ be the projective line over an infinite field. Then there are no (non-zero) projective objects in $\coh(X)$ or $\qcoh(X)$, see \cite[Ex. III.6.2]{Hartshorne}.
\end{remark} 

\subsection*{Tensor product}
This lack of projective objects implies that we still have to work in order to derive tensor product and pull-back.

\begin{theorem}
Let $X$ be a scheme and $\cF$ a quasi-coherent sheaf.
Then the flat sheaves in $\qcoh(X)$ form an adapted class for the left-exact functor $F = \cF \otimes \blank \colon \qcoh(X) \to \qcoh(X)$.
\end{theorem}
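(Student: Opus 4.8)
The functor $F=\cF\otimes\blank$ is in fact \emph{right}-exact (the word ``left-exact'' in the statement is a slip), so what is needed is the notion dual to the $F$-adapted class of the definition above, namely the one spelled out in the remark on right-exact functors. Writing $\cP_F$ for the class of flat quasi-coherent sheaves on $X$, the plan is to verify (i) that every $\cG\in\qcoh(X)$ admits a surjection $\cP\onto\cG$ with $\cP\in\cP_F$, and (ii) that $\cF\otimes P^\cpx$ is acyclic for every acyclic $P^\cpx\in\Com^-(\cP_F)$. Since flatness of an $\cO_X$-module, the tensor product of quasi-coherent sheaves and the vanishing of higher $\Tor$ are all stalk-local, I would at several points reduce assertions below to the corresponding statements for modules over the local rings $\cO_{X,x}$, where everything is classical homological algebra.

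For (i): take an affine open cover $X=\bigcup_j U_j$, $U_j=\Spec A_j$. Over $U_j$ the sheaf $\cG|_{U_j}$ is associated to an $A_j$-module, hence is a quotient of (the sheaf associated to) a free, and therefore flat, module. Globalising along the cover and taking a direct sum produces a flat quasi-coherent sheaf $\cP$ together with a map $\cP\to\cG$ which is surjective on every stalk, hence surjective; here one uses that arbitrary direct sums of flat sheaves are flat. That $\qcoh(X)$ has enough flat objects in this sense is a standard — if mildly technical — fact, which I would cite rather than reprove: on an affine scheme it is immediate, and in general the only subtlety is to arrange the globalisation so as to stay inside $\qcoh(X)$, which is orthogonal to the role flatness plays here.

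The content of the statement is (ii). Let $P^\cpx\in\Com^-(\cP_F)$ be acyclic and put $Z^i\coloneqq\ker d^i_{P^\cpx}$, so $Z^i=\im d^{i-1}_{P^\cpx}$ by acyclicity and one has short exact sequences $0\to Z^i\to P^i\to Z^{i+1}\to 0$ for every $i$, with the middle map induced by $d^i$. Since $P^\cpx$ is bounded above there is a largest $i_0$ with $P^{i_0}\neq 0$; then $Z^i=0$ for $i>i_0$ and $Z^{i_0}=P^{i_0}$, which is flat. Feeding the short exact sequences into a \emph{downward} induction on $i$ and using the standard fact that in a short exact sequence $0\to A\to B\to C\to 0$ with $B$ and $C$ flat also $A$ is flat (read off from the long exact $\Tor$-sequence), one concludes that every $Z^i$ is flat. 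Now tensor each sequence with $\cF$: as $Z^{i+1}$ is flat, $\Tor_1(\cF,Z^{i+1})=0$, so
\[
0\to \cF\otimes Z^i\to \cF\otimes P^i\to \cF\otimes Z^{i+1}\to 0
\]
is exact; splicing these back together gives $\ker(\id_\cF\otimes d^i)=\cF\otimes Z^i=\im(\id_\cF\otimes d^{i-1})$, i.e.\ $\cH^\cpx(\cF\otimes P^\cpx)=0$, which is (ii).

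The step I expect to be the main obstacle — or at least the one to treat with care — is the claim that all the syzygies $Z^i$ are flat. Its proof is a \emph{downward} induction, which only gets started because $P^\cpx$ is bounded above; for an acyclic complex of flats without an upper bound the cocycles need not be flat (the bi-infinite totally periodic complex $\cdots\to\ZZ/4\xto{2}\ZZ/4\xto{2}\ZZ/4\to\cdots$ over $\ZZ/4$ is the usual illustration), and then (ii) genuinely fails. This is precisely why the dual adapted class is required to consist of complexes in $\Com^-$. The remaining serious, but essentially citable, point is the existence of enough flat \emph{quasi-coherent} sheaves used in (i).
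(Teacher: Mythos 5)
Your proof is correct, and on the essential point it is actually more careful than the paper's. For the existence of enough flat objects you do exactly what the paper does: cover $X$ by opens, use that each $\cO_U$ is flat and that arbitrary direct sums of flat sheaves are flat, and assemble a surjection $\bigoplus_i \cO_{U_i} \onto \cG$ from local generators; no real difference there. The divergence is in the acyclicity condition. The paper disposes of it in one line by taking an acyclic complex of \emph{arbitrary} quasi-coherent sheaves and tensoring it with a single \emph{flat} sheaf $\cE$, invoking the definition of flatness. That is not literally the condition the (dual) definition of adaptedness asks for, which is that $\cF \otimes P^\cpx$ be acyclic for an arbitrary $\cF$ and an acyclic $P^\cpx \in \Com^-$ of \emph{flat} sheaves -- the roles of ``flat'' and ``arbitrary'' are swapped. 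You correctly identify this as the actual content and supply the standard argument: a downward induction starting from the upper bound of $P^\cpx$ shows all syzygies $Z^i$ are flat (via the two-out-of-three property for flatness read off the long exact $\Tor$ sequence), whence each $0 \to \cF \otimes Z^i \to \cF \otimes P^i \to \cF \otimes Z^{i+1} \to 0$ is exact and splices back to acyclicity of $\cF \otimes P^\cpx$. Your remark that boundedness above is indispensable (with the periodic $\ZZ/4$ complex as a counterexample) pinpoints exactly why the dual adaptedness condition is stated for $\Com^-$, something the paper's sketch glosses over entirely. You are also right that ``left-exact'' in the statement is a slip for ``right-exact''. What the paper's version buys is brevity appropriate to a survey; what yours buys is an argument that actually verifies the stated condition.
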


\begin{proof}
We check the two properties in the definition of adaptedness.
Let $\cF^\cpx$ be an acyclic complex of quasi-coherent sheaves. Then for a flat sheaf $\cE$, the complex $\cF^\cpx \otimes \cE$ is still acyclic by definition of flatness.

Let $\cG \in \qcoh(X)$.
We use that arbitrary direct sums of flat sheaves are flat and that $\cO_U \in \qcoh(X)$ is flat for any open $U \subset X$.
With this it is easy to build a surjection $\bigoplus_i \cO_{U_i} \onto \cG$ by choosing (local) generators of $\cG$ as a $\cO_X$-module.
\end{proof}

\begin{remark}
If $X$ is a noetherian scheme and $\cF$ a coherent sheaf on $X$,
then $\cF$ is flat if and only if it is locally free.
\end{remark}

In particular, tensoring with a locally free coherent sheaf yields an exact functor.
So with no need to derive, we arrive at the description of the Serre functor in the smooth case.

\begin{theorem}
Let $X$ be a smooth projective variety.
Then the exact functor
\[
\Db(X) \to \Db(X),\
\cF^\cpx \mapsto \cF^\cpx \otimes \omega_X [\dim(X)]
\]
is a Serre functor of $\Db(X)$.
\end{theorem}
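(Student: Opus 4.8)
The plan is to reduce the assertion to classical Serre duality for vector bundles and then to propagate it through the derived formalism of \autoref{sec:deriveleft} and \autoref{sec:deriveright}, keeping track at each stage that the isomorphism produced is functorial in both variables. As input I take the classical statement: since $X$ is smooth projective of dimension $n = \dim(X)$, there is a trace isomorphism $H^n(X,\omega_X) \isom \kk$, and for every locally free sheaf $\cE$ on $X$ the resulting cup-product pairing
\[
H^i(X,\cE) \times H^{n-i}(X,\cE^\vee\otimes\omega_X) \longrightarrow H^n(X,\omega_X) \cong \kk
\]
is perfect, \ie there is a functorial isomorphism $H^i(X,\cE) \cong H^{n-i}(X,\cE^\vee\otimes\omega_X)^\vee$. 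Read in the derived category, this says $\R\Gamma(X,\cE)^\vee \cong \R\Gamma(X,\cE^\vee\otimes\omega_X[n])$ in $\Db(\kk\hmod)$, functorially in $\cE$.

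The first real step is to upgrade this to bounded complexes: for every $\cE^\cpx \in \Db(X)$ there is a functorial isomorphism
\[
\R\Gamma(X,\cE^\cpx)^\vee \;\cong\; \R\Gamma\bigl(X, \R\SHom(\cE^\cpx, \omega_X[n])\bigr)
\]
in $\Db(\kk\hmod)$. One builds a natural transformation between the two sides by adjunction from the trace pairing $\R\Gamma(X,\cE^\cpx) \otimes \R\Gamma(X, \R\SHom(\cE^\cpx,\omega_X[n])) \to \R\Gamma(X,\omega_X[n]) \to \kk$, and then checks it is an isomorphism by dévissage: since $X$ is smooth, every object of $\Db(X)$ is isomorphic to a bounded complex of locally free sheaves, so one inducts on the length of such a representative, splitting off the outermost term in an exact triangle and applying the five lemma to the long exact cohomology sequences of \autoref{rem:longexact}, with the classical statement above as the base case. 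The point requiring care is compatibility with the connecting maps of the triangles, which rests on the functoriality of the trace.

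With this in hand the theorem follows formally. For $\cF^\cpx,\cG^\cpx \in \Db(X)$, the local-to-global identity $\R\Hom = \R\Gamma\circ\R\SHom$ (see \autoref{prop:local-global}) gives $\Hom_{\Db(X)}(\cF^\cpx,\cG^\cpx) = H^0\bigl(\R\Gamma(X, \cE^\cpx)\bigr)$ with $\cE^\cpx = \R\SHom(\cF^\cpx,\cG^\cpx) \in \Db(X)$. Since $X$ is smooth every complex is perfect, so
\[
\R\SHom(\cE^\cpx,\omega_X[n]) \cong \cE^\cpx{}^\vee \otimes \omega_X[n] \cong \R\SHom(\cG^\cpx,\cF^\cpx)\otimes\omega_X[n] \cong \R\SHom\bigl(\cG^\cpx, \cF^\cpx\otimes\omega_X[n]\bigr),
\]
using the identification $\R\SHom(\cF^\cpx,\cG^\cpx)^\vee \cong \R\SHom(\cG^\cpx,\cF^\cpx)$ of perfect complexes and that $\omega_X$ is a line bundle. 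Hence $H^0\bigl(\R\Gamma(X, \R\SHom(\cE^\cpx,\omega_X[n]))\bigr) = \Hom_{\Db(X)}(\cG^\cpx,\cF^\cpx\otimes\omega_X[n])$, again by local-to-global. Applying $H^0$ to the isomorphism of the previous paragraph — duality being exact on finite-dimensional $\kk$-vector spaces, it commutes with $H^0$ — now gives $\Hom_{\Db(X)}(\cF^\cpx,\cG^\cpx)^\vee \cong \Hom_{\Db(X)}(\cG^\cpx,\cF^\cpx\otimes\omega_X[n])$, equivalently the natural isomorphism $\Hom_{\Db(X)}(\cF^\cpx,\cG^\cpx) \cong \Hom_{\Db(X)}(\cG^\cpx,\cF^\cpx\otimes\omega_X[n])^\vee$. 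It remains to note that $\Hom$-spaces in $\Db(X)$ are finite dimensional for $X$ smooth and proper, and that $S = \blank\otimes\omega_X[n]$ is an auto-equivalence with quasi-inverse $\blank\otimes\omega_X^{-1}[-n]$; hence $S$ is a Serre functor of $\Db(X)$.

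The deepest ingredient is the classical Serre duality input itself — the construction of the trace morphism and the proof that the cup-product pairing is perfect — and everything else is a careful but routine exercise in the derived formalism. The main obstacle after that is bookkeeping: arranging that the duality isomorphism is compatible with exact triangles from the outset, so that the dévissage goes through and the final isomorphism is genuinely natural in $\cF^\cpx$ and $\cG^\cpx$ simultaneously.
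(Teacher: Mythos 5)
The paper states this theorem without any proof (consistent with its expository policy of deferring to \cite{Huybrechts}), so there is nothing to compare line by line; judged on its own, your argument is the standard textbook derivation and is sound. You correctly isolate the two genuine inputs: classical Serre duality for locally free sheaves (trace map plus perfectness of the cup-product pairing), and the formal identity $\R\Hom = \R\Gamma\circ\R\SHom$ together with $\R\SHom(\cE^\cpx,\omega_X[n])\cong\R\SHom(\cG^\cpx,\cF^\cpx\otimes\omega_X[n])$ for perfect complexes, which is where smoothness enters (every bounded complex is quasi-isomorphic to a bounded complex of locally free sheaves). Two points deserve the emphasis you give them but are worth naming even more precisely. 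First, the d\'evissage only works because you construct the comparison map \emph{globally} from the trace pairing before checking it is an isomorphism; had you only produced isomorphisms on each cohomology group you could not invoke the five lemma, since cones are not functorial in $\cD(\cA)$ (as the paper stresses after {\bf TR3}), and the compatibility of your map with the connecting morphisms of the long exact sequences of \autoref{rem:longexact} is exactly the non-trivial verification. Second, for the Serre-functor axiom you need $\eta_{\cF^\cpx,\cG^\cpx}$ natural in \emph{both} variables simultaneously, not merely an isomorphism for each pair; this again follows from having defined everything via one natural transformation out of the trace, but it is the statement one must actually check. With those caveats acknowledged, as you do, the proof is complete at the level of rigour the paper itself adopts.
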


Projective objects in abelian categories are characterised by a lifting property dual to the one of \autoref{prop:lifting}.
For an adapted class, like locally free sheaves for the tensor product, such a lifting does not exist in general.

\begin{example}
Consider $X = \PP^1$. The structure sheaf $\cO$ is already locally free,
but tensoring the Euler sequence with $\cO(-2)$ yields another locally free resolution:
\[
\begin{tikzcd}
P^\cpx \ar[d, "f"'] & 0 \ar[r] & \cO(-2) \ar[r] & \cO(-1)^{\oplus 2} \ar[r] \ar[d] & 0 \\
\cO       &        & 0 \ar[r]       & \cO \ar[r]                & 0
\end{tikzcd}
\]
Note that $f$ is a quasi-isomorphism, as $\cH^\cpx(P^\cpx) = \cO$.
The (dual) lifting property of \autoref{prop:lifting} would ask for a map $g$ in the converse direction:
\[
\begin{tikzcd}
\cO \ar[r, two heads, "\id"] \ar[d, dashrightarrow, "g"'] & \cO \ar[d, equal] \\
P^\cpx \ar[r, two heads, "f"] & \cO
\end{tikzcd}
\]
But such a $g$ cannot exist, since $\Hom(\cO,\cO(-1)) = H^0(\PP^1,\cO(-1)) = 0$.
\end{example}

\begin{proposition}
Let $X$ be a scheme and $\cF^\cpx \in \Com^-(\coh(X))$.
Then the right-exact functor $\cF^\cpx \otimes \blank$ induces the left-derived functor
\[
\cF^\cpx \Ltensor \blank \colon 
\cD^-(X) \to \cD^-(X).
\]
If additionally, $X$ is smooth and $\cF^\cpx \in \Db(X)$,
then this functor restricts to
\[
\cF^\cpx \Ltensor \blank \colon 
\Db(X) \to \Db(X).
\]
\end{proposition}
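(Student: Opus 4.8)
The plan is to dualise the left-exact constructions of \autoref{sec:deriveleft}. For the first assertion, recall that the flat sheaves in $\qcoh(X)$ form an adapted class for $\cF\otimes\blank$ (this was established above for a single sheaf), so by the dual of \autoref{rem:replace-complex} every $\cG^\cpx\in\Com^-(\qcoh(X))$ admits a quasi-isomorphism $P^\cpx\to\cG^\cpx$ with $P^\cpx$ a bounded-above complex of flat quasi-coherent sheaves — for instance built from direct sums of the flat sheaves $\cO_U$ used in that proof. I would then set $\cF^\cpx\Ltensor\cG^\cpx\coloneqq\mathrm{Tot}^{\oplus}(\cF^\cpx\otimes P^\cpx)$. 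Since $\cF^\cpx$ and $P^\cpx$ are both bounded above, each degree of this total complex is a \emph{finite} direct sum and it vanishes in all sufficiently large degrees, so the result lies in $\cD^-(X)$; functoriality, and independence of the resolution up to canonical isomorphism, follow exactly as in (the dual of) \autoref{thm:derived-functor}, using that $\mathrm{Tot}^{\oplus}(\cF^\cpx\otimes\blank)$ carries quasi-isomorphisms of bounded-above flat complexes to quasi-isomorphisms.

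That last point is what I expect to be the main obstacle. I would prove it by the standard reduction: a bounded-above \emph{acyclic} complex $C^\cpx$ of flat sheaves has all its syzygies $\ker d^i$ flat — descending induction on the short exact sequences $0\to\ker d^i\to C^i\to\ker d^{i+1}\to0$, using that a kernel in such a sequence between flats with flat quotient is again flat — so $\cF^\cpx\otimes C^\cpx$ is acyclic in each row, and then a bounded-above double-complex/spectral-sequence argument upgrades this to acyclicity of $\mathrm{Tot}^{\oplus}(\cF^\cpx\otimes C^\cpx)$. This is precisely the ``flat sheaves are an adapted class'' statement promoted from single sheaves to bounded-above complexes in the first variable; it is routine but is the genuinely technical heart of the argument.

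For the second assertion, assume $X$ smooth of dimension $n$ and $\cF^\cpx\in\Db(X)$. By the dual of \autoref{prop:restrict-Db} for right-exact functors — applied to $\mathrm{Tot}^{\oplus}(\cF^\cpx\otimes\blank)$, which is covered since the flat sheaves are an adapted class — it suffices to show $\cF^\cpx\Ltensor\cG\in\Db(X)$ for every coherent $\cG$. Here I would take a bounded-above flat resolution $P^\cpx\to\cG$ and truncate it at degree $-n$: by smoothness every local ring of $X$ is regular of dimension $\le n$, hence of global dimension $\le n$, so the $n$-th syzygy $\ker d^{-n}$ is flat stalkwise, hence flat, giving a \emph{bounded} flat resolution. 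Then $\cF^\cpx\Ltensor\cG\cong\mathrm{Tot}(\cF^\cpx\otimes P^\cpx)$ is a bounded complex of quasi-coherent sheaves, and the spectral sequence of the double complex $\cF^i\otimes P^j$ has first page the sheaf-$\Tor$'s $\Tor^{\cO_X}_p(\cF^i,\cG)$, which are coherent (as $X$ is noetherian) and vanish for $p>n$ (regularity); being bounded and convergent, it shows $\cF^\cpx\Ltensor\cG$ has bounded coherent cohomology, i.e. lies in $\Db(X)$. One bookkeeping caveat is that \autoref{prop:restrict-Db} is stated for an honest functor $\qcoh(X)\to\qcoh(Y)$, so strictly one reruns its truncation argument with $\mathrm{Tot}^{\oplus}(\cF^\cpx\otimes\blank)$ in place of $F$; the only new input is the finite flat resolution just described. (Alternatively, when $X$ is quasi-projective one may instead replace $\cF^\cpx$ itself by a bounded complex $E^\cpx$ of locally free coherent sheaves and compute $\cF^\cpx\Ltensor\cG^\cpx\cong E^\cpx\otimes\cG^\cpx$ directly.)
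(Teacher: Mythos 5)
Your argument is correct, but for the second (and only nontrivial) assertion you take a genuinely different route from the paper. The paper's proof is a two-line sketch: invoke the analogue of \autoref{prop:restrict-Db} together with the theorem that on a smooth variety every $\cF^\cpx\in\Db(X)$ is quasi-isomorphic to a bounded complex of locally free sheaves of length at most $\dim(X)$ --- that is, it resolves the \emph{first} argument, after which $\cF^\cpx\Ltensor\cG^\cpx$ is literally the bounded coherent complex $E^\cpx\otimes\cG^\cpx$ and nothing more needs to be said. You instead keep $\cF^\cpx$ as it is and bound the \emph{second} argument: regularity of the local rings gives every coherent (indeed quasi-coherent) sheaf a flat resolution of length $\le\dim(X)$, and a Tor spectral sequence for the resulting bounded double complex yields bounded coherent cohomology; you then feed this into the dual of \autoref{prop:restrict-Db}. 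Both work. Your route has the advantage of not invoking the resolution property (existence of locally free coherent resolutions, which is a separate theorem about smooth varieties and is exactly what your closing parenthesis alludes to), at the price of working with non-coherent flat quasi-coherent resolutions and needing the spectral sequence to recover coherence of the cohomology; the paper's route is shorter once that theorem is granted. One small point to watch in your version: when you rerun the truncation argument of \autoref{prop:restrict-Db}, the edge term of the truncated flat resolution is only quasi-coherent, not coherent, so you should either note that finite Tor-dimension holds for arbitrary quasi-coherent sheaves on a regular noetherian scheme, or argue coherence of the cohomology via quasi-isomorphism invariance rather than via the hypothesis ``$\L F(\cG)\in\Db$ for coherent $\cG$'' alone. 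Your treatment of the first assertion (flat syzygies of a bounded-above acyclic flat complex, finiteness of the antidiagonals of $\mathrm{Tot}^{\oplus}$) fills in details the paper omits entirely and is fine.
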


\begin{proof}
The last statement can be shown using the analogue of \autoref{prop:restrict-Db} and the theorem that for smooth varieties any $\cF^\cpx \in \Db(X)$ is quasi-isomorphic to a bounded complex of locally free sheaves of length at most $\dim(X)$.
\end{proof}

\begin{remark}
The $(-i)$-th derived functor of the tensor product is denoted by 
\[
\Tor_i(\cF^\cpx,\blank) \coloneqq \cH^{-i}(\cF^\cpx \Ltensor \blank).
\]
\end{remark}

\subsection*{Pull-back}

Let $f \colon X \to Y$ be a morphism of noetherian schemes.
Note that the \emph{pull-back} (or \emph{inverse image}) $f^*$ is the composition of the exact functor $f^{-1}$ with the tensor product $\cO_X \otimes_{f^{-1} \cO_Y} \blank$.

From this, using flat sheaves as an adapted class, we get the left-derived functor
\[
\L f^* \colon \cD^-(\qcoh(Y)) \to \cD^-(\qcoh(X)) 
\]
which is the composition of $f^{-1}$ and $\cO_X \ltensor_{f^{-1} \cO_Y} \blank$.

\begin{proposition}
Let $f \colon X \to Y$ be a morphism of noetherian schemes with $Y$ smooth.
Then $\L f^*$ restricts to
\[
\L f^* \colon \Db(Y) \to \Db(X).
\]
\end{proposition}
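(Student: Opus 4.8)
The plan is to imitate the proof given above for the left-derived tensor product, reducing to a statement about a single coherent sheaf via the right-exact analogue of \autoref{prop:restrict-Db}.

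First I would record that $f^*$ is right-exact and that flat sheaves form an adapted class for it: a flat quasi-coherent sheaf is $f^*$-acyclic, since $f^{-1}$ is exact and carries a flat $\cO_Y$-module to a flat $f^{-1}\cO_Y$-module, so that $f^* = \cO_X \otimes_{f^{-1}\cO_Y} f^{-1}(\blank)$ is exact on flats. Hence $\L f^*$ is the left-derived functor of $f^*$, and by the dual of \autoref{prop:restrict-Db} it suffices to check that $\L f^*(\cF) \in \Db(X)$ for every single $\cF \in \coh(Y)$.

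This is where the smoothness of $Y$ enters, exactly as in the corresponding argument for $\Ltensor$: on the smooth $Y$ every coherent sheaf $\cF$ admits a finite resolution
\[
0 \to E^{-n} \to \cdots \to E^{-1} \to E^0 \to \cF \to 0, \qquad n \le \dim Y,
\]
by locally free coherent sheaves. These lie in the flat adapted class, so $\L f^*(\cF)$ is computed by applying the ordinary pull-back $f^*$ termwise, yielding the bounded complex $[\,f^*E^{-n} \to \cdots \to f^*E^0\,]$ of locally free --- in particular coherent --- sheaves on $X$. Hence $\L f^*(\cF) \in \Db(X)$, and the proof finishes by the dual of \autoref{prop:restrict-Db}, using the identification of $\Db(X)$ with $\Db_{\coh}(\qcoh(X))$.

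The essential input, and the only place the hypotheses really bite, is the finiteness of the homological dimension of $Y$ furnished by smoothness: without it the flat resolution used to compute $\L f^*$ is merely bounded above, and $\L f^*$ can genuinely acquire unbounded cohomology, the obstruction being measured by the local $\Tor$-dimensions, i.e. by the regularity of the local rings of $Y$. The remaining points --- exactness of $f^{-1}$, the fact noted above that on a noetherian scheme a coherent sheaf is flat if and only if it is locally free, and the stability of local freeness under pull-back --- are routine.
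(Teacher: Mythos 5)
Your proof is correct and follows essentially the same route as the paper, which simply says ``as for the tensor product, smoothness of $Y$ implies that we can replace a bounded complex of coherent sheaves by a bounded complex of locally free sheaves''; you have merely spelled out the reduction via the dual of \autoref{prop:restrict-Db}, the adaptedness of flat sheaves, and the finite locally free resolution furnished by smoothness. Nothing to add.
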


\begin{proof}
As for the tensor product, smoothness of $Y$ implies that we can replace a bounded complex of coherent sheaves by a bounded complex of locally free sheaves.
\end{proof}


\subsection{Compatibilities}
\label{sec:compatibilities}

\AIM{
In this section, we will speak a bit about the interaction between the above introduced derived functors: adjunction of pull-back and push-forward, projection formula and flat base change.
}

\begin{remark}
The crucial technical tool for this section is hidden in \autoref{rem:derived-functor-unique}: the right- (or left-) derived functor associated to a left- (or right-) exact functor is  essentially unique.

In particular, an equality of functors on an adapted class extends to an equality of derived functors.
\end{remark}

As a first application of this remark, we see that the adjunction of $f^*$ and $f_*$ on the abelian level extends to derived categories.

\begin{proposition}
Let $f \colon X \to Y$ be proper morphism of smooth varieties.
Then $\L f^*$ and $\R f_*$ form a pair of adjoint functors, \ie
there is an isomorphism functorial in both arguments
\[
\Hom_{\Db(X)}(\L f^* \cF, \cG) \isom \Hom_{\Db(Y)}(\cF, \R f_* \cG).
\]
\end{proposition}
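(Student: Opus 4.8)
The plan is to deduce the derived adjunction from the ordinary one,
\[
\Hom_{\qcoh(X)}(f^*\cA,\cB)\isom\Hom_{\qcoh(Y)}(\cA,f_*\cB),
\]
by computing both sides of the asserted isomorphism through adapted resolutions, and then invoking the essential uniqueness of derived functors (\autoref{rem:derived-functor-unique}) to see that the outcome does not depend on those choices. First I would fix resolutions: as $Y$ is smooth, $\cF$ is quasi-isomorphic to a bounded complex $\cP^\cpx$ of locally free sheaves, and since $f^*$ takes a locally free sheaf to a flat one, $\L f^*\cF\isom f^*\cP^\cpx$; as $X$ is noetherian, $\qcoh(X)$ has enough injectives, so I would choose an injective resolution $\cG\to\cJ^\cpx$ in $\Com^+(\qcoh(X))$, whence $\R f_*\cG\isom f_*\cJ^\cpx$. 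Everything would be carried out in $\cD^+(\qcoh(X))$ and $\cD^+(\qcoh(Y))$, descending to bounded coherent complexes only at the end via properness of $f$.

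The core of the proof is then a chain of natural identifications. Since $\cJ^\cpx$ is a bounded-below complex of injectives, the lifting property of injective resolutions (see \autoref{rem:lifting-complex} and \autoref{prop:injectives-hot}) shows that morphisms in $\cD^+(\qcoh(X))$ into $\cJ^\cpx$ are homotopy classes of maps of complexes, so
\[
\Hom_{\Db(X)}(\L f^*\cF,\cG)=\Hom_{\Hot^+(\qcoh(X))}(f^*\cP^\cpx,\cJ^\cpx).
\]
Applying the abelian adjunction degreewise --- a chain map $f^*\cP^\cpx\to\cJ^\cpx$ is a family of maps $f^*\cP^i\to\cJ^i$ compatible with the differentials, which under adjunction correspond to maps $\cP^i\to f_*\cJ^i$ compatible with the differentials, and null-homotopies transport across the adjunction --- produces a natural isomorphism
\[
\Hom_{\Hot^+(\qcoh(X))}(f^*\cP^\cpx,\cJ^\cpx)\isom\Hom_{\Hot^+(\qcoh(Y))}(\cP^\cpx,f_*\cJ^\cpx).
\]
It then remains to match the last group with $\Hom_{\Db(Y)}(\cP^\cpx,f_*\cJ^\cpx)=\Hom_{\Db(Y)}(\cF,\R f_*\cG)$, that is, to show that the localisation map $\Hom_{\Hot^+(\qcoh(Y))}(\cP^\cpx,f_*\cJ^\cpx)\to\Hom_{\Db(Y)}(\cP^\cpx,f_*\cJ^\cpx)$ is bijective.

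This last point is the real obstacle, as it is the only step that is not purely formal: $f_*$ does \emph{not} preserve injectivity of quasi-coherent sheaves --- \eg the pushforward of a skyscraper sheaf along a closed immersion already fails to be injective --- so one cannot simply re-run \autoref{prop:injectives-hot} over $Y$. The way around it is to use that each $f_*\cJ^i$ is nevertheless flasque, hence $\Gamma(Y,\blank)$-acyclic, and that $\cP^\cpx$ is locally free: then $\R\SHom_Y(\cP^\cpx,f_*\cJ^\cpx)=\SHom_Y(\cP^\cpx,f_*\cJ^\cpx)$ is again a bounded-below complex of flasque sheaves, so by the identity $\R\Hom_Y(\cP^\cpx,\blank)=\R\Gamma(Y,\blank)\circ\R\SHom_Y(\cP^\cpx,\blank)$ (\autoref{prop:local-global}) the complex $\Gamma(Y,\SHom_Y(\cP^\cpx,f_*\cJ^\cpx))$ of homomorphisms of complexes already computes $\R\Hom_Y(\cP^\cpx,f_*\cJ^\cpx)$; its zeroth cohomology is therefore simultaneously $\Hom_{\Hot^+(\qcoh(Y))}(\cP^\cpx,f_*\cJ^\cpx)$ and, via $H^0\,\R\Hom_Y(\blank,\blank)=\Hom_{\Db(Y)}(\blank,\blank)$ (compare \autoref{ex:ext}), equal to $\Hom_{\Db(Y)}(\cP^\cpx,f_*\cJ^\cpx)$. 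Composing the three natural isomorphisms and quoting \autoref{rem:derived-functor-unique} to discard the resolutions yields the adjunction; its functoriality in both variables is inherited, step by step, from that of the abelian adjunction.
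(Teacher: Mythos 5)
Your proposal is correct, but it is worth saying that the paper does not actually prove this proposition at all: it is stated as ``a first application'' of the remark that derived functors are essentially unique (\autoref{rem:derived-functor-unique}), i.e.\ the reader is merely told that the abelian-level adjunction extends. Your argument is the honest resolution-level implementation of that hint, and it is the standard one: replace $\cF$ by a bounded locally free complex $\cP^\cpx$ (smoothness of $Y$), replace $\cG$ by a bounded-below injective complex $\cJ^\cpx$, compute both sides as homotopy classes of chain maps, and transport the degreewise adjunction $\Hom(f^*\cP^i,\cJ^i)\cong\Hom(\cP^i,f_*\cJ^i)$ together with null-homotopies. What I particularly like is that you isolate the one genuinely non-formal step, which the paper's remark silently glosses over: since $f_*$ does not preserve injectivity (your skyscraper example is apt --- $f_*$ preserves injectives only when its left adjoint $f^*$ is exact, i.e.\ when $f$ is flat), the identification $\Hom_{\Hot^+}(\cP^\cpx,f_*\cJ^\cpx)\cong\Hom_{\cD^+}(\cP^\cpx,f_*\cJ^\cpx)$ requires an argument, and your route via flasqueness of $f_*\cJ^i$, local freeness of $\cP^\cpx$, and \autoref{prop:local-global} is a legitimate one (using that flasqueness is a local property, that bounded-below complexes of $\Gamma$-acyclics compute $\R\Gamma$, and that $H^0$ of the total $\Hom$-complex is simultaneously the homotopy-category $\Hom$ and, once the complex is known to represent $\R\Hom$, the derived-category $\Hom$). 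The only cosmetic gaps are of the same order as the rest of the paper's sketches: the passage from $\Hom_{\Db(X)}$ to $\Hom_{\cD^+(\qcoh(X))}$ uses full faithfulness of $\Db(X)\subset\cD^+(\qcoh(X))$, and the bijectivity of $\Hom_{\Hot^+}(A^\cpx,\cJ^\cpx)\to\Hom_{\cD^+}(A^\cpx,\cJ^\cpx)$ for arbitrary $A^\cpx$ and injective $\cJ^\cpx$ is slightly stronger than the literal statement of \autoref{prop:injectives-hot}, though it follows from \autoref{rem:lifting-complex} by the usual roof argument. In short: a complete and more informative proof than the paper offers.
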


Another equality of abelian functors is $\Gamma \circ \SHom = \Hom$.

\begin{proposition}
\label{prop:local-global}
Let $X$ be a smooth projective variety and $\cF^\cpx \in \Db(X)$.
Then $\R \Gamma \circ \R\SHom(\cF^\cpx,\blank) = \R \Hom(\cF^\cpx,\blank)$.
\end{proposition}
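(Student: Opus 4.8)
The plan is to deduce this from the general principle (quoted in the remark preceding the proposition) that derived functors are essentially unique, so an equality of functors on an adapted class extends to the derived level; the subtlety is that we are composing two derived functors, so we need a suitable \emph{composite} adapted class. Concretely, I would argue as follows. On the abelian level we have the factorisation $\Hom(\cF^\cpx,\blank) = \Gamma \circ \SHom(\cF^\cpx,\blank)$ of left-exact functors. The standard way to derive a composite is to check that the inner functor sends an adapted class for the outer composite into an adapted class for the outer functor; here $\SHom(\cF^\cpx,\blank)$ should send injective (quasi-coherent) sheaves to $\Gamma$-acyclic sheaves. The relevant fact is that if $I$ is an injective quasi-coherent sheaf on a noetherian scheme, then $\SHom(\cE,I)$ is \emph{flasque} (more precisely, its restriction to any open is again of this form, and injectives are flasque), hence acyclic for $\Gamma$; and one extends this from a sheaf $\cF$ to a bounded complex $\cF^\cpx$ by the usual truncation/resolution argument, using that $\cF^\cpx$ is quasi-isomorphic to a bounded complex of locally free sheaves (smoothness), for which $\SHom(\cF^\cpx, I^\cpx)$ of an injective resolution is a complex of flasque sheaves.

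First I would make precise the class of resolutions being used: an object $A^\cpx \in \cD^+(X)$ is replaced by an injective resolution $I^\cpx$, then $\R\SHom(\cF^\cpx,\blank)$ is computed on $I^\cpx$, giving $\SHom(\cF^\cpx, I^\cpx)$, and I would observe this complex consists of $\Gamma$-acyclic sheaves by the flasqueness remark above, replacing $\cF^\cpx$ with a finite locally free complex so that $\SHom$ is computed termwise and flasqueness is inherited. Then $\R\Gamma$ applied to this complex can be computed directly as $\Gamma(\SHom(\cF^\cpx,I^\cpx))$ without a further resolution, since $\Gamma$-acyclic objects are themselves $\Gamma$-adapted. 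Hence
\[
\R\Gamma\bigl(\R\SHom(\cF^\cpx,A^\cpx)\bigr) = \Gamma\bigl(\SHom(\cF^\cpx,I^\cpx)\bigr) = \Hom(\cF^\cpx,I^\cpx) = \R\Hom(\cF^\cpx,A^\cpx),
\]
where the middle equality is the abelian identity $\Gamma\circ\SHom = \Hom$ and the last is the definition of $\R\Hom$ via injective resolutions. One also checks that both sides land in $\Db(\kk\hmod)$ by the two preceding propositions, so the identification is an equality of functors $\Db(X) \to \Db(\kk\hmod)$, and functoriality in $A^\cpx$ is automatic from the construction.

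The main obstacle is the acyclicity step: verifying that $\SHom(\cF^\cpx, I^\cpx)$ is a complex of $\Gamma$-acyclic sheaves, which is exactly the condition needed so that the composition of the two derived functors is computed "in one go". The cleanest route is to reduce $\cF^\cpx$ to a bounded complex of locally free sheaves (available since $X$ is smooth), because then $\SHom(\cF^\cpx, \blank)$ is an \emph{exact} functor computed degreewise, and $\SHom(\cE, I)$ for $\cE$ locally free and $I$ injective is a direct summand of a finite sum of copies of $I$, hence injective, hence flasque; taking the total complex of a double complex of flasque sheaves still gives flasque terms. Once this is in place, everything else is a routine unwinding of definitions together with the uniqueness-of-derived-functors principle already invoked in the excerpt. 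Alternatively, one can cite the general theorem on derived functors of a composition ($\R(G\circ F) = \R G \circ \R F$ whenever $F$ maps an adapted class into a $G$-adapted class), reducing the whole proposition to the single acyclicity verification above.
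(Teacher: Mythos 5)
Your proposal is correct and follows essentially the same route as the paper, whose entire proof is the observation that $\R\Gamma\circ\R\SHom(\cF^\cpx,\blank)$ computes $\R(\Gamma\circ\SHom(\cF^\cpx,\blank))$ because $\SHom(\cF^\cpx,\blank)$ sends injective sheaves to $\Gamma$-acyclic ones. Your fleshing-out of that acyclicity step (replacing $\cF^\cpx$ by a bounded locally free complex, flasqueness of $\SHom(\cE,I)$) is the standard verification; the only small imprecision is that $\SHom(\cE,I)\cong\cE^\vee\otimes I$ is only \emph{locally} a summand of finitely many copies of $I$, so its injectivity is cleanest via the adjunction $\Hom(\cG,\cE^\vee\otimes I)\cong\Hom(\cG\otimes\cE,I)$ together with exactness of $\blank\otimes\cE$.
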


\begin{proof}
Hidden in this statement is the equality $\R(\Gamma \circ \SHom(\cF^\cpx,\blank)) = \R \Gamma \circ \R \SHom(\cF^\cpx,\blank)$, which follows from the fact that $\SHom(\cF^\cpx,\blank)$ maps injective sheaves to $\Gamma$-acyclic ones.
\end{proof}

The next proposition is the so-called \emph{projection formula}
which, on the abelian level of coherent sheaves, holds for locally free sheaves $\cF$.

\begin{proposition}
Let $f \colon X \to Y$ be proper morphism of smooth varieties and $\cE^\cpx \in \Db(X)$, $\cF^\cpx \in \Db(Y)$.
Then there is a natural isomorphism
\[
\R f_*(\cE^\cpx) \Ltensor \cF^\cpx \isom \R f_*( \cE^\cpx \Ltensor \L f^*(\cF^\cpx)).
\]
\end{proposition}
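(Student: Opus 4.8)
The plan is to reduce the statement to the projection formula on the abelian level, where for a locally free sheaf $\cF$ on $Y$ one has the natural isomorphism $f_*(\cE)\otimes\cF\cong f_*(\cE\otimes f^*\cF)$, and then to invoke the essential uniqueness of derived functors (\autoref{rem:derived-functor-unique}) to promote it to the derived level. First I would use that $Y$ is smooth to replace $\cF^\cpx\in\Db(Y)$ by a quasi-isomorphic bounded complex of locally free sheaves, which I again call $\cF^\cpx$. Since each term $\cF^q$ is locally free, $\L f^*(\cF^\cpx)=f^*(\cF^\cpx)$ is a bounded complex of locally free sheaves on $X$, and both $\blank\Ltensor\cF^\cpx$ and $\blank\Ltensor f^*(\cF^\cpx)$ are then computed by the ordinary (total complex) tensor product, with no further resolution needed.

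Next I would evaluate the two derived push-forwards on one and the same compatible resolution. Choose a quasi-isomorphism $\cE^\cpx\to\cI^\cpx$ with $\cI^\cpx$ a complex of injective (hence flasque) quasi-coherent sheaves, so that $\R f_*(\cE^\cpx)=f_*(\cI^\cpx)$. The key point is that $\cI^p\otimes f^*(\cF^q)$ is again flasque, since locally it is a finite direct sum of copies of $\cI^p$, and therefore $f_*$-acyclic. Consequently $\cE^\cpx\Ltensor f^*(\cF^\cpx)$ is computed by the total complex of $\cI^\cpx\otimes f^*(\cF^\cpx)$, and applying $\R f_*$ amounts to applying $f_*$ termwise. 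Thus the right-hand side of the asserted isomorphism is the total complex of $f_*\bigl(\cI^p\otimes f^*(\cF^q)\bigr)$, while the left-hand side is the total complex of $f_*(\cI^p)\otimes\cF^q$.

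Finally I would apply the abelian projection formula termwise, $f_*(\cI^p)\otimes\cF^q\isom f_*\bigl(\cI^p\otimes f^*(\cF^q)\bigr)$; as this isomorphism is natural in both variables it is compatible with both differentials (up to the signs built into the total complex), so it assembles to an isomorphism of total complexes, hence of objects of $\Db(Y)$. Naturality in $\cE^\cpx$ and $\cF^\cpx$ and independence of the chosen resolutions follow from the essential uniqueness of derived functors. The main obstacle is the middle step: one must check carefully that tensoring a flasque (or injective) quasi-coherent sheaf with a locally free sheaf preserves $f_*$-acyclicity, and, more generally, that all three derived functors in play can be computed simultaneously on the same pair of resolutions. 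The rest is the routine, if slightly tedious, bookkeeping of signs and indices in the total complexes.
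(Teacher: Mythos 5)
Your argument is correct and follows exactly the route the paper indicates: the paper itself gives no proof, but its preceding remarks (the abelian projection formula for locally free $\cF$, plus the essential uniqueness of derived functors from \autoref{rem:derived-functor-unique}) sketch precisely your strategy of resolving $\cF^\cpx$ by locally free sheaves, $\cE^\cpx$ by injectives, and extending the termwise abelian isomorphism. Your added care about $\cI^p\otimes f^*(\cF^q)$ remaining $f_*$-acyclic (flasqueness is a local property, so a sheaf locally a finite direct sum of flasque sheaves is flasque) is exactly the point that needs checking, and it holds.
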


Finally, there is also the \emph{flat base change}.
For this, note that pull-backs along flat morphisms do not need to be derived. 

\begin{proposition}
Let $u \colon X \to Z$ be a flat morphism and $f \colon Y \to Z$ a proper morphism of smooth varieties.
Consider the fibre product
\[
\begin{tikzcd}
X \underset{\smash{Z}}{\times} Y \ar[r, "v"] \ar[d, "g"'] & Y \ar[d, "f"] \\
X \ar[r, "u"] & Z
\end{tikzcd}
\]
Then for $\cF^\cpx \in \Db(Y)$ there is a natural isomorphism
\[
u^* \R f_*(\cF^\cpx) \isom \R g_* v^*(\cF^\cpx)
\]
and in particular, $u^* \R^i f_*(\cF^\cpx) \cong \R^i g_* v^*(\cF^\cpx)$.
\end{proposition}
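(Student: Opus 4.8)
The plan is to construct the natural base change morphism and then to prove it is an isomorphism by a local, \v{C}ech-theoretic computation after reducing to the case where $X$ and $Z$ are affine. For the construction: since $v$ is a base change of the flat morphism $u$ it is itself flat, so $v^*=\L v^*$ is exact; together with $u^*=\L u^*$ and the equality $u\circ g=f\circ v$ this gives a canonical identification $\L g^*\circ u^*\cong v^*\circ\L f^*$. Composing the unit $\id\to\R g_*\L g^*$ of the adjunction $\L g^*\dashv\R g_*$ with the morphism obtained by applying $\R g_*v^*$ to the counit $\L f^*\R f_*\to\id$ of $\L f^*\dashv\R f_*$ (using the identification above) produces a morphism
\[
\beta_{\cF^\cpx}\colon u^*\R f_*(\cF^\cpx)\longrightarrow\R g_*v^*(\cF^\cpx)
\]
natural in $\cF^\cpx$; if one is uneasy about these adjunctions on $\Db$ of the possibly singular $Y'\coloneqq Y\times_Z X$, one can perform the construction on $\cD^-(\qcoh(\blank))$ and $\cD^+(\qcoh(\blank))$, where the adjunctions hold formally, and then restrict. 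It then remains to see that $\beta_{\cF^\cpx}$ is an isomorphism.

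Next I would reduce to the affine case. The morphism $\beta_{\cF^\cpx}$ is compatible with restriction along open immersions into $Z$, and (once $Z$ is affine) along open immersions into $X$, so we may assume $Z=\Spec A$ and $X=\Spec B$ with $B$ a flat $A$-algebra. In this case $u^*$ on quasi-coherent sheaves is simply $\blank\otimes_A B$, and $\Gamma(\Spec B,\blank)$ is an exact equivalence onto $B\hmod$ (similarly over $A$). Under these equivalences $u^*\R f_*(\cF^\cpx)$ corresponds to $\R\Gamma(Y,\cF^\cpx)\otimes_A B$ and $\R g_*v^*(\cF^\cpx)$ to $\R\Gamma(Y',v^*\cF^\cpx)$; hence it suffices to exhibit, compatibly with $\beta$, a natural isomorphism $\R\Gamma(Y',v^*\cF^\cpx)\cong\R\Gamma(Y,\cF^\cpx)\otimes_A B$.

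For this I would choose a finite affine open cover $\{V_i\}$ of the noetherian separated variety $Y$. By separatedness all intersections $V_{i_0\cdots i_p}$ are affine, so the totalised \v{C}ech double complex of $\cF^\cpx$ for this cover is a bounded-below complex of $\Gamma(Y,\blank)$-acyclic sheaves quasi-isomorphic to $\cF^\cpx$, and therefore represents $\R\Gamma(Y,\cF^\cpx)$ as a complex of $A$-modules. The opens $v^{-1}(V_i)=V_i\times_Z X$ form an affine open cover of $Y'$ with affine intersections $V_{i_0\cdots i_p}\times_Z X$, and on global sections base change along $\Spec B\to\Spec A$ is $\blank\otimes_A B$; hence the \v{C}ech complex for $v^*\cF^\cpx$ relative to this cover is, term by term, that of $\cF^\cpx$ tensored with $B$ over $A$, and it represents $\R\Gamma(Y',v^*\cF^\cpx)$. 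Flatness of $B$ over $A$ makes $\blank\otimes_A B$ commute with cohomology, which gives the desired isomorphism; unwinding the adjunctions identifies it with $\beta_{\cF^\cpx}$. Gluing back over $X$ and then over $Z$ yields the global statement, and applying $\cH^i$ (which commutes with the exact functor $u^*$) gives the assertion about $\R^i f_*$ and $\R^i g_*$.

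The step I expect to be the main obstacle is the bookkeeping in this last part: checking that the totalised \v{C}ech complex genuinely computes $\R\Gamma$ of a \emph{bounded complex} $\cF^\cpx$ — which rests on the $\Gamma$-acyclicity of each \v{C}ech term, and this is exactly where separatedness of $Y$ (making the intersections $V_{i_0\cdots i_p}$ affine) is used — and verifying that the local isomorphisms so produced coincide with the canonical morphism $\beta_{\cF^\cpx}$, so that they are independent of the chosen cover and glue over $X$ and $Z$.
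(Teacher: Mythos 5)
The paper states this proposition without any proof (consistent with its announced policy of only sketching arguments), so there is nothing to compare line by line; judged on its own, your proposal is correct and is the standard textbook argument for flat base change (essentially Hartshorne III.9.3 promoted to complexes, or \cite[\S 3.3]{Huybrechts}). The two-step structure is right: the base change morphism $\beta_{\cF^\cpx}$ built from the unit of $\L g^*\dashv\R g__*$ and the counit of $\L f^*\dashv\R f_*$ via $\L g^*u^*\cong v^*\L f^*$, followed by a local verification that it is an isomorphism using the \v{C}ech resolution for a finite affine cover of the separated scheme $Y$ and the identity $\Gamma(V\times_A B, v^*\cF)=\Gamma(V,\cF)\otimes_A B$ on affines, with flatness of $B$ over $A$ carrying the tensor product through cohomology. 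Your attention to the two genuinely delicate points --- that the totalised \v{C}ech complex computes $\R\Gamma$ of a bounded complex because each \v{C}ech term is a pushforward along an affine morphism of a quasi-coherent sheaf on an affine, hence $\Gamma$-acyclic, and that the local isomorphisms must be identified with the global $\beta_{\cF^\cpx}$ rather than merely shown to exist --- is exactly where such proofs usually go wrong, and you flag both correctly. One small remark worth making explicit: properness of $f$ is not used anywhere in your argument, and indeed the isomorphism holds at the level of $\cD(\qcoh)$ for any quasi-compact separated $f$; properness only guarantees that $\R f_*$ preserves coherence so that the statement can be read inside $\Db(X)$ as the paper intends.
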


\subsection{Fourier-Mukai transforms}
\label{sec:fourier}

\AIM{
We introduce an important class of exact functors between derived categories of coherent sheaves.
}

Throughout this section, let $X$ and $Y$ be smooth projective varieties.
Moreover, we denote the two projections from their product by:
\[
\begin{tikzcd}[column sep=small]
& X \times Y \ar[ld, "q"'] \ar[rd, "p"]\\
X && Y
\end{tikzcd}
\]

\begin{definition}
Let $\cP^\cpx \in \Db(X \times Y)$ then the induced exact functor
\[
\Phi_{\cP^\cpx} \colon \Db(X) \to \Db(Y),\ 
\cE^\cpx \mapsto \R p_*(q^* \cE^\cpx \Ltensor \cP^\cpx)
\]
is called the \emph{Fourier-Mukai transform} with \emph{Fourier-Mukai kernel} $\cP^\cpx$.
\end{definition}

Note that in the above definition, there is no need to derive $q^*$, as a projection is flat.
Moreover, a Fourier-Mukai kernel can be used to define also a Fourier-Mukai transform in the converse direction. To stress the direction, we sometimes write $\Phi^{X\to Y}_{\cP^\cpx}$.

For a Fourier-Mukai kernel $\cP^\cpx \in \Db(X \times Y)$,
its \emph{left and right adjoint Fourier-Mukai kernels} in $\Db(X \times Y)$ are
\[
\cP^\cpx_L \coloneqq \cP^\cpx{}^\vee \otimes p^* \omega_Y[\dim(Y)],
\quad
\cP^\cpx_R \coloneqq \cP^\cpx{}^\vee \otimes q^* \omega_X[\dim(X)].
\]
This notation is justified by the following statement.

\begin{proposition}
\label{prop:fm-adjoints}
Let $\cP^\cpx \in \Db(X \times Y)$, then the Fourier-Mukai transforms
$\Phi_{\cP^\cpx_L}^{Y\to X}$ and $\Phi_{\cP^\cpx_R}^{Y\to X}$ are left and right adjoint to $\Phi_{\cP^\cpx}^{X\to Y}$.
\end{proposition}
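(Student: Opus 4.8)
The plan is to verify the adjunction $\Hom_{\Db(Y)}(\cE^\cpx, \Phi_{\cP^\cpx}^{X\to Y}\cF^\cpx) \cong \Hom_{\Db(X)}(\Phi_{\cP^\cpx_R}^{Y\to X}\cE^\cpx, \cF^\cpx)$ for the right adjoint (the left adjoint being symmetric, exchanging the roles of $X$ and $Y$), by unravelling both Fourier--Mukai transforms into their constituent derived functors and then chaining together the three basic compatibilities already established: the $(\L f^*, \R f_*)$-adjunction, the projection formula, and the relative Serre duality $\Hom_{\Db(X\times Y)}(\cA^\cpx, \cB^\cpx) \cong \Hom_{\Db(X\times Y)}(\cB^\cpx, \cA^\cpx \otimes \omega_{X\times Y}[\dim(X\times Y)])^\vee$, itself a consequence of the Serre functor on $\Db(X\times Y)$ together with $\omega_{X\times Y} \cong q^*\omega_X \otimes p^*\omega_Y$.

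First I would write $\Phi_{\cP^\cpx}^{X\to Y}\cF^\cpx = \R p_*(q^*\cF^\cpx \Ltensor \cP^\cpx)$ and move everything into $\Db(X\times Y)$: using the $(\L p^*, \R p_*)$-adjunction (here $p$ is automatically proper, being a projection of smooth projective varieties) one has
\[
\Hom_{\Db(Y)}(\cE^\cpx, \R p_*(q^*\cF^\cpx \Ltensor \cP^\cpx)) \cong \Hom_{\Db(X\times Y)}(p^*\cE^\cpx, q^*\cF^\cpx \Ltensor \cP^\cpx).
\]
Next, by relative Serre duality on $X\times Y$ this equals
\[
\Hom_{\Db(X\times Y)}\big(q^*\cF^\cpx, (p^*\cE^\cpx \Ltensor \cP^\cpx{}^\vee) \otimes \omega_{X\times Y}[\dim X + \dim Y]\big)^\vee,
\]
where I have used the standard identity $\cA^\cpx{}^\vee \Ltensor \cB^\cpx \cong \R\SHom(\cA^\cpx,\cB^\cpx)$ for perfect complexes on a smooth variety to rewrite $\R\SHom(p^*\cE^\cpx \Ltensor \cP^\cpx, q^*\cF^\cpx)$. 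Then splitting $\omega_{X\times Y}[\dim X+\dim Y] = q^*\omega_X[\dim X] \Ltensor p^*\omega_Y[\dim Y]$ and regrouping, the inner object becomes $p^*(\cE^\cpx \otimes \omega_Y[\dim Y]) \Ltensor (\cP^\cpx{}^\vee \otimes q^*\omega_X[\dim X]) = p^*(\cE^\cpx \otimes \omega_Y[\dim Y]) \Ltensor \cP^\cpx_R$. Applying Serre duality on $X\times Y$ once more to remove the double dual, then the $(\L q^*, \R q_*)$-adjunction, converts this to $\Hom_{\Db(X)}(\R q_*(p^*(\cE^\cpx\otimes\omega_Y[\dim Y]) \Ltensor \cP^\cpx_R), \cF^\cpx)$. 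Finally I would absorb the twist $\otimes\,\omega_Y[\dim Y]$ into the kernel: since $\cP^\cpx_R$ already carries the $q^*\omega_X[\dim X]$ factor and the extra $p^*\omega_Y[\dim Y]$ is exactly what the Serre-duality bookkeeping on the other side demands, a careful rechecking of the exponents shows the right-hand side is precisely $\Hom_{\Db(X)}(\Phi_{\cP^\cpx_R}^{Y\to X}\cE^\cpx, \cF^\cpx)$, as desired.

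The routine but necessary preliminary is establishing the projection-formula/base-change identities that let $q^*$ and $p^*$ commute past tensor products with pulled-back sheaves from the two factors (for instance $q^*\cF^\cpx \Ltensor p^*\cA^\cpx$ behaves as expected), together with the fact that $q^*$, $p^*$ on a product of smooth projective varieties need no flatness hypothesis beyond what is automatic. I expect the main obstacle to be bookkeeping rather than conceptual: keeping track of which factor of $\omega_{X\times Y}$ and which shift lands on which side through two applications of Serre duality, and checking that the composite isomorphism is functorial in both $\cE^\cpx$ and $\cF^\cpx$ — the latter following because every step (adjunction units/counits, the projection formula, the Serre-duality pairing) is natural, so their composite is too. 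A clean alternative that sidesteps some of the $\omega$-juggling is to first prove the \emph{left}-adjoint statement for $\Phi_{\cP^\cpx_L}$ directly from the $(\L p^*,\R p_*)$-adjunction and the projection formula alone (no Serre duality needed), and then deduce the right-adjoint statement by applying the left-adjoint result to the opposite categories, where the Serre functor intervenes exactly once; I would likely organise the write-up that way.
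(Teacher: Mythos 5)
The paper states this proposition without proof, so your argument has to stand entirely on its own. Its overall architecture is the standard one (unwind the transforms, pass to $\Db(X\times Y)$ by the $(\L p^*,\R p_*)$-adjunction, trade $\blank\Ltensor\cP^\cpx$ for $\blank\Ltensor\cP^\cpx{}^\vee$, and use Serre duality on the product twice to simulate Grothendieck--Verdier duality for the other projection), but there is a genuine error of handedness that the deferred ``careful rechecking of the exponents'' would expose rather than repair. The isomorphism you set out to prove, $\Hom_{\Db(Y)}(\cE^\cpx,\Phi_{\cP^\cpx}\cF^\cpx)\cong\Hom_{\Db(X)}(\Phi_{\cP^\cpx_R}\cE^\cpx,\cF^\cpx)$, is the adjunction exhibiting $\Phi_{\cP^\cpx_R}$ as a \emph{left} adjoint of $\Phi_{\cP^\cpx}$; the right-adjoint statement is $\Hom_{\Db(Y)}(\Phi_{\cP^\cpx}\cF^\cpx,\cE^\cpx)\cong\Hom_{\Db(X)}(\cF^\cpx,\Phi_{\cP^\cpx_R}\cE^\cpx)$. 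Moreover, the identity you wrote is genuinely false for $\cP^\cpx_R$: taking $\cP^\cpx=\cO_{\Gamma_f}$, it would identify the right adjoint $f^{!}=\L f^*(\blank)\otimes\omega_f[\dim X-\dim Y]$ of $\R f_*$ with its left adjoint $\L f^*$, which already fails for the inclusion of a point into $\PP^1$ (the two differ by a shift). Concretely, if you push your chain of isomorphisms to the end, the twists introduced by the two applications of Serre duality on $X\times Y$ do not cancel: what comes out is $\Hom_{\Db(X)}(\Phi_{\cP^\cpx_R}(\cE^\cpx\otimes\omega_Y[\dim Y]),\cF^\cpx\otimes\omega_X[\dim X])$, which is canonically $\Hom_{\Db(X)}(\Phi_{\cP^\cpx_L}\cE^\cpx,\cF^\cpx)$ --- a true statement, but the left-adjoint one, with the roles of the two kernels opposite to what you claim.

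The repair is to start from the other side: begin with $\Hom_{\Db(Y)}(\R p_*(q^*\cF^\cpx\Ltensor\cP^\cpx),\cE^\cpx)$, apply your double-Serre-duality sandwich to the projection $p$ (whose relative dualizing complex is $q^*\omega_X[\dim X]$) to land in $\Hom_{\Db(X\times Y)}(q^*\cF^\cpx\Ltensor\cP^\cpx,\,p^*\cE^\cpx\Ltensor q^*\omega_X[\dim X])$, move $\cP^\cpx{}^\vee$ across so that exactly $\cP^\cpx_R\Ltensor p^*\cE^\cpx$ appears, and finish with the honest $(\L q^*,\R q_*)$-adjunction, which now points the right way and leaves no stray twists. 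Finally, your closing claim that the left-adjoint statement follows from the $(\L p^*,\R p_*)$-adjunction and the projection formula alone, with ``no Serre duality needed'', cannot be right either: the factor $p^*\omega_Y[\dim Y]$ in the definition of $\cP^\cpx_L$ is precisely the Serre/Grothendieck duality contribution of the projection $q$, and nothing else in your toolkit produces it.
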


Finally, one can ask, whether a given exact functor $F \colon \Db(X) \to \Db(Y)$ is \emph{of Fourier-Mukai type}, \ie can be written as a Fourier-Mukai transform with some kernel.
The central result to this question is the following theorem by Orlov.

\begin{theorem}
\label{thm:orlov}
Let $F \colon \Db(X) \to \Db(Y)$ be an exact fully faithful functor 
and assume that $X$ and $Y$ are smooth projective varieties.
Then there is a $\cP^\cpx \in \Db(X\times Y)$ such that $F \cong \Phi_{\cP^\cpx}$.
Moreover, $\cP^\cpx$ is unique up to isomorphism.
\end{theorem}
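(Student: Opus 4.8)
The plan is to follow Orlov's original strategy, whose geometric heart is to reconstruct the hypothetical Fourier--Mukai kernel from the images of structure sheaves of points. First I would fix an ample line bundle $\cO_X(1)$ and consider the objects $F(\cO_x)$ for closed points $x \in X$ and $F(\cO_X(i))$ for $i \in \ZZ$. Using that $F$ is fully faithful and exact, one checks that the family $\{F(\cO_x)\}_{x \in X}$ is a \emph{spanning class} in $\Db(Y)$ with the numerical properties of a point-like family: $\Hom(F(\cO_x), F(\cO_x)[j])$ has the correct dimensions, and $\Hom(F(\cO_x), F(\cO_y)) = 0$ for $x \neq y$. This is the input to a criterion (due to Bondal--Orlov, building on Bridgeland) saying that such a family is, up to a shift and twist, the image of the skyscraper sheaves under an equivalence that is \emph{locally} a Fourier--Mukai transform; concretely, one produces a coherent sheaf $\cP^\cpx$ on $X \times Y$, flat over $X$, whose restriction to $\{x\} \times Y$ is $F(\cO_x)$.

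The construction of $\cP^\cpx$ itself is the technical core. One works on $X \times X \times Y$ and uses the convolution structure: the diagonal $\Delta \subset X \times X$ has a resolution by the (twisted) Koszul-type complex coming from $\cO_X(1)$, and applying $F$ "in the third variable" to this resolution, together with the vanishing and dimension statements above, yields a complex on $X \times Y$. More carefully, I would take a locally free resolution of $\cO_\Delta$, tensor and push forward appropriately, and apply $F$ termwise; the homological control provided by full faithfulness (via \autoref{ex:ext}, $\Hom_{\Db}(\cE,\cF[i]) = \Ext^i$) guarantees that the resulting object $\cP^\cpx \in \Db(X \times Y)$ has the property that $\Phi_{\cP^\cpx}(\cO_x) \cong F(\cO_x)$ for all $x$, and more generally that there is a natural transformation $\Phi_{\cP^\cpx} \to F$.

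Having built a candidate kernel and a natural transformation $\eta \colon \Phi_{\cP^\cpx} \Rightarrow F$, the final step is to show $\eta$ is an isomorphism. Here I would invoke that $\{\cO_x\}_{x \in X}$ is a spanning class in $\Db(X)$ and that both $\Phi_{\cP^\cpx}$ (automatically, by \autoref{prop:fm-adjoints}, it has both adjoints and is exact) and $F$ are exact functors admitting adjoints; a standard argument shows that a natural transformation of such functors that is an isomorphism on a spanning class is an isomorphism everywhere. Uniqueness of $\cP^\cpx$ up to isomorphism then follows from $\cP^\cpx \cong \Phi_{\cP^\cpx}^{X \times Y \to X \times Y}$-type reconstruction, or more directly: if $\Phi_{\cP^\cpx} \cong \Phi_{\cQ^\cpx}$, restricting to skyscrapers gives $\cP^\cpx|_{\{x\}\times Y} \cong \cQ^\cpx|_{\{x\}\times Y}$ for all $x$, and a base-change and flatness argument upgrades this to $\cP^\cpx \cong \cQ^\cpx$ globally.

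The main obstacle is the middle paragraph: showing that the termwise application of $F$ to a resolution of $\cO_\Delta$ actually glues to a well-defined object of $\Db(X \times Y)$ and computes $F$ on all of $\Db(X)$, not just on points and line bundles. This requires the careful bookkeeping of Orlov's proof --- in particular controlling the $\Hom$-spaces $\Hom(F(\cO_X(i)), F(\cO_X(j))[k])$ for all $i,j,k$ and using Beilinson-type resolutions --- and it is here that the smoothness and projectivity of both $X$ and $Y$ are essential, since they guarantee the needed finiteness, the existence of the dualizing sheaf, and that $\Db(X)$ is generated by $\{\cO_X(i)\}$.
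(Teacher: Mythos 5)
The paper offers no proof of this theorem: it is quoted as a deep result of Orlov, with the surrounding remark deferring to the survey \cite{Canonaco-Stellari}, consistent with the author's stated policy of only indicating ideas. So there is no internal argument to compare yours against, and your proposal has to be judged on its own terms. As an outline of Orlov's actual strategy it is broadly on target: the kernel is built from a resolution of the diagonal adapted to an ample sequence $\{\cO_X(i)\}$, full faithfulness supplies the $\Ext$-vanishing needed to assemble the termwise images into a convolution (Postnikov system) in $\Db(X\times Y)$, and you are right to identify the middle paragraph as the technical core that you have not carried out.

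Two of the steps you do assert are, however, wrong as stated rather than merely unproved. First, the claim that a natural transformation of exact functors which is an isomorphism on a spanning class is an isomorphism everywhere: a spanning class only detects vanishing of objects through orthogonality, whereas propagating an isomorphism $\eta_A$ across cones (via the five lemma for a morphism of triangles) requires the class to \emph{generate} $\Db(X)$ as a triangulated category. The skyscrapers $\{\cO_x\}$ are a spanning class but do not generate (already on a curve the triangulated subcategory they generate consists of complexes with torsion cohomology); the generation argument must be run on the ample sequence $\{\cO_X(i)\}$. Worse, there is no a priori natural transformation $\Phi_{\cP^\cpx}\Rightarrow F$ on all of $\Db(X)$ to which such an argument could be applied; passing from ``isomorphic on the ample sequence, compatibly with morphisms'' to ``isomorphic as functors'' is a separate and substantial lemma of Orlov, delicate precisely because cones are not functorial. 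Second, the uniqueness argument: the kernel is in general a genuine complex, not a sheaf flat over $X$, so ``restrict to $\{x\}\times Y$ and use flatness'' does not apply, and pointwise isomorphisms $\cP^\cpx|_{\{x\}\times Y}\cong \mathcal{Q}^\cpx|_{\{x\}\times Y}$ do not glue for free; the standard uniqueness proof again goes through the convolution construction and uses full faithfulness. Neither objection invalidates your overall plan, but both steps need to be replaced by the actual arguments in \cite{Orlov} or \cite{Huybrechts}.
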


\begin{remark}
The original statement also assumes the existence of a left adjoint. Based on work of Bondal and van den Bergh, the existence of both adjoints is automatic. Moreover, over an algebraically closed field of characteristic zero, a non-zero exact full functor $F \colon \Db(X) \to \Db(Y)$ between smooth projective varieties is already faithful.
Details on both can be found in the survey \cite[Prop. 3.5 \& Thm. 3.14]{Canonaco-Stellari}.
\end{remark}

For the following, let $\iota \colon X \isomtext \Delta \subset X\times X$ denote the inclusion of the diagonal $\Delta$, in particular, $\iota_* \cO_X = \cO_{\Delta}$.

\begin{examples}
Let $f \colon X \to Y$ be a morphism between smooth projective varieties
and denote by $\Gamma_f$ its graph in $X \times Y$.
Then push-forward and pull-back are of Fourier-Mukai type:
\[
\R f_* \cong \Phi^{X\to Y}_{\cO_{\Gamma_f}},
\quad
\L f^* \cong \Phi^{Y\to X}_{\cO_{\Gamma_f}}.
\]
Notable special cases are
\begin{itemize}
\item $\id = \Phi_{\cO_\Delta}$, where $\Delta$ is the diagonal;
\item $H^*(X,\blank) = \Phi_{\cO_X}$, using that $X \cong X \times \Spec(\kk)$ and  $H^*(X,\blank) = \R \pi_*$ for $\pi \colon X \to \Spec(\kk)$.
\end{itemize}

\smallskip

The shift functor is of Fourier-Mukai type using the kernel $\cO_\Delta[1]$. 

\smallskip 

The tensor product $\cF^\cpx \ltensor \blank$ is of Fourier-Mukai type, using the kernel $\iota_*(\cF^\cpx)$ with $\iota \colon X \into X \times X$.
By \autoref{prop:fm-adjoints}, also $\SHom(\cF^\cpx,\blank)$ is of Fourier-Mukai type, as it is the left adjoint of the tensor product.
\end{examples}

\begin{proposition}
\label{prop:composing-FM}
The composition of two functors of Fourier-Mukai type is again of Fourier-Mukai type.
\end{proposition}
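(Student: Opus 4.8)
The plan is to show that given Fourier-Mukai kernels $\cP^\cpx \in \Db(X \times Y)$ and $\cQ^\cpx \in \Db(Y \times Z)$, the composition $\Phi_{\cQ^\cpx} \circ \Phi_{\cP^\cpx} \colon \Db(X) \to \Db(Z)$ is again of Fourier-Mukai type, with an explicit kernel obtained by a ``convolution'' on the triple product $X \times Y \times Z$. Concretely, writing $\pi_{XY}, \pi_{YZ}, \pi_{XZ}$ for the three projections from $X \times Y \times Z$ onto the pairwise products, I claim the composite kernel is
\[
\cR^\cpx \coloneqq \R \pi_{XZ *}\bigl( \pi_{XY}^* \cP^\cpx \Ltensor \pi_{YZ}^* \cQ^\cpx \bigr) \in \Db(X \times Z),
\]
so that $\Phi_{\cQ^\cpx} \circ \Phi_{\cP^\cpx} \cong \Phi_{\cR^\cpx}$. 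First I would note that $\cR^\cpx$ indeed lies in $\Db(X \times Z)$: each pullback along a (flat) projection preserves boundedness, the derived tensor product of bounded complexes on a smooth variety is bounded by the proposition on $\Ltensor$, and $\R \pi_{XZ *}$ preserves boundedness and coherence since $\pi_{XZ}$ is proper (as $Y$ is projective) and all varieties are smooth.

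The heart of the argument is an explicit chain of natural isomorphisms unwinding $\Phi_{\cQ^\cpx}(\Phi_{\cP^\cpx}(\cE^\cpx))$ for $\cE^\cpx \in \Db(X)$. Labelling the projections from $X \times Y$ as $q_{XY}, p_{XY}$ (to $X$ and $Y$) and those from $Y \times Z$ as $q_{YZ}, p_{YZ}$ (to $Y$ and $Z$), one has by definition
\[
\Phi_{\cQ^\cpx}(\Phi_{\cP^\cpx}(\cE^\cpx)) = \R p_{YZ *}\Bigl( q_{YZ}^* \R p_{XY *}\bigl( q_{XY}^* \cE^\cpx \Ltensor \cP^\cpx \bigr) \Ltensor \cQ^\cpx \Bigr).
\]
Now I would introduce the triple product $X \times Y \times Z$ and pull everything back there: the key technical inputs, all quoted from the compatibilities section, are (i) flat base change applied to the fibre-product square relating $q_{YZ} \circ (\text{--})$ and $\pi_{XY}$, which lets us rewrite $q_{YZ}^* \R p_{XY *}(\blank)$ as $\R \pi_{XY *}'$ of a pullback to $X \times Y \times Z$ (here $\pi'_{XY}$ is the projection $X \times Y \times Z \to Y \times Z$); (ii) the projection formula, to absorb the $\Ltensor \cQ^\cpx$ inside the pushforward after pulling $\cQ^\cpx$ back to the triple product; (iii) functoriality of derived pullback ($\L(g \circ h)^* \cong \L h^* \circ \L g^*$, hence associativity of pulling $\cE^\cpx$ back through $X \times Y \times Z$ to various faces) and of derived pushforward ($\R(g \circ h)_* \cong \R g_* \circ \R h_*$, used to factor $\R p_{YZ *}$ through $\R \pi_{XZ *}$ and then project to $Z$); and (iv) associativity and commutativity of $\Ltensor$ to collect the three pulled-back factors $\pi_X^* \cE^\cpx$, $\pi_{XY}^* \cP^\cpx$, $\pi_{YZ}^* \cQ^\cpx$ on the triple product. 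Chasing these through yields
\[
\Phi_{\cQ^\cpx}(\Phi_{\cP^\cpx}(\cE^\cpx)) \cong \R p_{XZ *}\bigl( q_{XZ}^* \cE^\cpx \Ltensor \cR^\cpx \bigr) = \Phi_{\cR^\cpx}(\cE^\cpx),
\]
where the pushforward $\R\pi_{XZ*}$ in the definition of $\cR^\cpx$ is pulled out past the tensor with $q_{XZ}^*\cE^\cpx$ by one more application of the projection formula and base change.

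The main obstacle I anticipate is not any single step but the bookkeeping of all the projections from the triple product and making sure the base-change square and projection-formula applications are set up with the correct maps; in particular one must check that the relevant morphisms are proper (so $\R f_*$ behaves) and that enough smoothness is present to justify each $\Ltensor$ and $\L f^*$ landing in the bounded derived category — but all of these are guaranteed here since $X$, $Y$, $Z$ are smooth projective, so $X \times Y \times Z$ is too, and every projection between products of these is both flat-along-the-fibre-directions-we-need and proper. A cleaner, essentially bookkeeping-free alternative would be to invoke the uniqueness half of Orlov's theorem (\autoref{thm:orlov}) together with \autoref{prop:fm-adjoints}: since $\Phi_{\cP^\cpx}$ and $\Phi_{\cQ^\cpx}$ are exact and admit both adjoints, so does their composite, and one then only needs to exhibit \emph{some} kernel — but producing that kernel is exactly the convolution above, so I would present the explicit computation as the proof and merely remark that uniqueness of the kernel follows from Orlov's theorem.
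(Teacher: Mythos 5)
Your proposal is correct: the convolution kernel $\R\pi_{XZ*}(\pi_{XY}^*\cP^\cpx \Ltensor \pi_{YZ}^*\cQ^\cpx)$ together with the chain of flat base change, projection formula, and functoriality of $\L(\blank)^*$ and $\R(\blank)_*$ is precisely the standard argument (Mukai's; see \cite[\S 5.1]{Huybrechts}), and the paper itself states the proposition without proof, deferring exactly to this computation in the references. The only blemish is a labelling slip (you call the projection $X\times Y\times Z \to Y\times Z$ ``$\pi'_{XY}$''), which does not affect the argument; your closing remark that Orlov's theorem is not a substitute for exhibiting the kernel is also the right call.
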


\begin{example}
Let $X$ be a smooth projective variety. Then the Serre functor $S = \blank \otimes \omega_X [\dim(X)]$ of $\Db(X)$ is of Fourier-Mukai type.
\end{example}

With this \autoref{prop:composing-FM} by Mukai, we can use the above functors as building blocks, yielding a vast array of functors of Fourier-Mukai type.
It is probably fair to say that all geometrically meaningful functors are of Fourier-Mukai type. For further discussion see the survey \cite{Canonaco-Stellari}.

\section{The derived Torelli theorem for K3 surfaces}

\AIM{
This last section serves as a bridge to \cite{Macri-Stellari}:
we discuss the auto-equivalences of the derived category of K3 surfaces.
In this section, the ground field will be $\CC$.
}

For basic facts about K3 surfaces and its Hodge theory needed here, already the recap in \cite[\S 10.1]{Huybrechts} is enough.
We will only recall the global Torelli theorem. For this let $X$ be a K3 surface. 
The standard polarised Hodge structure on the second cohomology $H^2(X,\CC)$, which uses the intersection pairing, can be restricted to the integral cohomology:
\[
H^2(X,\ZZ) = H^{2,0}(X,\ZZ) \oplus H^{1,1}(X,\ZZ) \oplus H^{0,2}(X,\ZZ).
\]
Note that a smooth rational curve $C \subset X$ becomes a $(-2)$-class $[C]$ inside $H^{1,1}(X,\ZZ) \subset H^2(X,\ZZ)$. In particular, the associated reflection 
\[
s_{[C]} \colon H^2(X,\CC) \to H^2(X,\CC),\ 
\alpha \mapsto \alpha + (\alpha,[C]) [C]
\]
is a Hodge isometry, \ie $s_{[C]}$ respects the intersection pairing and the decomposition. 
Moreover, this reflection restricts to an (integral) Hodge isometry
$s_{[C]} \colon H^2(X,\ZZ) \to H^2(X,\ZZ)$.

\begin{theorem}[Torelli]
\label{thm:torelli}
Let $X$ and $Y$ be two K3 surfaces.
Then there is an isomorphism $f \colon X \isomtext Y$ if and only if there exists a Hodge isometry $\phi \colon H^2(X,\ZZ) \isomtext H^2(Y,\ZZ)$.

In this case, there are smooth rational curves $C_1,\ldots,C_m$ on $X$ such that
\[
\phi = \pm s_{[C_1]} \circ \cdots \circ s_{[C_m]} \circ f_*.
\]
\end{theorem}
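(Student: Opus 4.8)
The plan is to prove the two directions of the equivalence separately, and then upgrade the mere existence of a Hodge isometry to the explicit normal form involving reflections.

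For the easy direction, suppose $f \colon X \isomtext Y$ is an isomorphism. Then $f^* \colon H^2(Y,\ZZ) \to H^2(X,\ZZ)$ (equivalently $f_* = (f^*)^{-1}$) is an isomorphism of abelian groups preserving the cup product, hence an isometry; and since $f$ is holomorphic it pulls back $(p,q)$-forms to $(p,q)$-forms, so it respects the Hodge decomposition. Thus $\phi = f_*$ is a Hodge isometry and there is nothing more to do here.

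For the hard direction, assume given a Hodge isometry $\phi \colon H^2(X,\ZZ) \isomtext H^2(Y,\ZZ)$. The strategy is the classical one: reduce to a \emph{marked}/\emph{polarised} situation so that one may invoke the strong form of the Torelli theorem (existence of an isomorphism from a Hodge isometry that sends an ample class to an ample class). The obstruction is that $\phi$ need not send the ample cone of $X$ to the ample cone of $Y$. To fix this, one studies the group $W \subseteq \Isom(H^2(X,\ZZ))$ generated by the reflections $s_{[C]}$ in classes of smooth rational curves $C\subset X$ (equivalently, in effective $(-2)$-classes): this is a Weyl group acting on the positive cone, and its chambers are exactly the K\"ahler/ample cones of the various birational models; moreover $W$ acts simply transitively on the set of these chambers inside a fixed component of the positive cone. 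First I would arrange, possibly after composing $\phi$ with $\pm\id$ (this accounts for the ``$\pm$'' in the statement), that $\phi$ sends the positive cone of $X$ containing an ample class to the positive cone of $Y$ containing an ample class. Then $\phi$ maps the ample cone of $X$ into \emph{some} chamber of the $W_Y$-action on the positive cone of $Y$; composing with a suitable element $w = s_{[D_1]}\circ\cdots\circ s_{[D_k]} \in W_Y$ I can assume $\phi' \coloneqq w\circ\phi$ maps the ample cone of $X$ onto the ample cone of $Y$, in particular sends one ample class to an ample class. By the strong Torelli theorem there is then an isomorphism $f\colon X \isomtext Y$ with $f_* = \phi'$, hence $\phi = w^{-1} \circ f_* = s_{[D_k]}\circ\cdots\circ s_{[D_1]}\circ f_*$ (using $s_{[C]}^{-1} = s_{[C]}$). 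Reinstating the global sign gives exactly the asserted form $\phi = \pm s_{[C_1]}\circ\cdots\circ s_{[C_m]}\circ f_*$.

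The main obstacle is the chamber-structure input: one needs that the reflections in effective $(-2)$-classes act transitively on the ample chambers inside a positive-cone component, and that each such chamber is realised as the ample cone of $X$ itself (so that no birational modification is needed, $X$ being a surface — here one uses that minimal K3 surfaces have no flops, so all these chambers already correspond to $X$), together with the fact that every $(-2)$-class in $H^{1,1}(X,\ZZ)$ is, up to sign, effective (Riemann--Roch on a K3). Packaging these facts and the precise statement of the strong Torelli theorem is where the real content lies; once they are in hand, the reflection normal form drops out by the simply-transitive action of the Weyl group on chambers.
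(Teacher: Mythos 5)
The paper does not prove this theorem at all: it is recalled as a classical result (Piatetski-Shapiro--Shafarevich, Burns--Rapoport) with a pointer to \cite[\S 10.1]{Huybrechts}, so there is no in-paper argument to compare against. Judged on its own, your outline is the standard and correct \emph{reduction}: the easy direction is fine, and the hard direction correctly identifies the mechanism --- normalise $\phi$ by $\pm\id$ so that it respects the choice of positive-cone component, use that the Weyl group $W_Y$ generated by reflections in classes of smooth rational curves acts simply transitively on the chambers of the positive cone (the ample cone being one chamber, and all chambers being $W_Y$-translates of it since a K3 surface is minimal and has no other birational models), and then apply the strong Torelli theorem to $w\circ\phi$. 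One bookkeeping point: your normal form comes out as $\phi = s_{[D_k]}\circ\cdots\circ s_{[D_1]}\circ f_*$ with the $D_i$ smooth rational curves on $Y$, whereas the statement puts the curves on $X$; this is reconciled by conjugating through $f_*$, which turns $s_{[D_i]}$ into $s_{[f^*D_i]}$ acting on $H^2(X,\ZZ)$, the $f^*D_i$ again being classes of smooth rational curves.

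The genuine gap is that the entire analytic core of the theorem --- the strong Torelli theorem, i.e.\ that a Hodge isometry carrying an ample (K\"ahler) class to an ample class is induced by a unique isomorphism --- is invoked as a black box. That statement is not a routine lemma one can ``package'': its proof requires the period domain for marked K3 surfaces, the local Torelli theorem, surjectivity of the period map, and a density/degeneration argument (classically via Kummer surfaces). So what you have written is a correct derivation of the refined form of the theorem \emph{from} the strong Torelli theorem, together with the lattice-theoretic facts about $(-2)$-classes (Riemann--Roch forcing $\pm\delta$ effective, and the chamber structure); it is not a self-contained proof, and the hardest implication of the ``if and only if'' still rests entirely on the uncited deep input. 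Since the paper itself also treats the theorem as a citation, this is an acceptable level of detail for these notes, but you should state explicitly that the strong Torelli theorem is being assumed rather than proved.
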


\subsection*{Derived Torelli}
In the following, we will see that the above statement is the cohomological ``shadow'' of a statement involving the respective derived categories.

Let $f \colon X \to Y$ be a morphism of K3 surfaces.
On the level of rational cohomology, $f$ induces a ring homomorphism
\[
f^* \colon H^*(Y,\QQ) \to H^*(X,\QQ),
\]
the \emph{cohomological pull-back}.
Using Poincar\'e duality, the \emph{cohomological push-forward}
\[
f_* \colon H^*(X,\QQ) \to H^*(Y,\QQ)
\]
can be defined as the dual map to $f^*$.
Given a class $\alpha \in H^*(X\times Y,\QQ)$ the \emph{cohomological Fourier-Mukai transform} with kernel $\alpha$ is
\[
\Phi^H_{\alpha} \colon H^*(X,\QQ) \to H^*(Y,\QQ),\ \beta \mapsto p_*(\alpha.q^*(\beta)).
\]

\begin{definition}
Let $X$ be an algebraic K3 surface.
Then the \emph{Mukai vector} of $E^\cpx \in \Db(X)$ is defined as
\[
v(E^\cpx) \coloneqq (\rk(E^\cpx), c_1(E^\cpx), \rk(E^\cpx) + c_1^2(E^\cpx)/2 - c_2(E^\cpx)).
\]
Moreover, for $\alpha = (\alpha_0,\alpha_1,\alpha_2)$ and $\beta = (\beta_0,\beta_1,\beta_2)$ with $\alpha_k, \beta_k \in H^{2k}(X,\QQ)$, 
the \emph{Mukai pairing} is given as
\[
\pairing{\alpha,\beta} \coloneqq \alpha_1.\beta_1 - \alpha_0.\beta_2 - \alpha_2.\beta_0.
\]
\end{definition}

\begin{remark}
Up to a sign, the Mukai pairing can be seen as a cohomological shadow of the \emph{Euler characteristic} $\chi$. 
To be precise, for $E^\cpx, F^\cpx \in \Db(X)$ holds
\[
- \pairing{v(E^\cpx),v(F^\cpx)} = \chi(E^\cpx,F^\cpx) \coloneqq \sum (-1)^k \dim \R^k \Hom(E^\cpx,F^\cpx).
\]
This follows quite immediate from the Hirzebruch-Riemann-Roch formula.
\end{remark}

The definition of the Mukai pairing is made in such a way, that the pairing extends the intersection pairing on $H^2(X,\ZZ)$.
Even more, we can extend the integral Hodge structure on $H^2(X,\ZZ)$ by setting
\[
\begin{split}
\tilde H^{2,0}(X,\ZZ) &\coloneqq H^{2,0}(X,\ZZ),\\
\tilde H^{1,1}(X,\ZZ) &\coloneqq H^0(X,\ZZ) \oplus H^{1,1}(X,\ZZ) \oplus H^4(X,\ZZ),\\
\tilde H^{0,2}(X,\ZZ) &\coloneqq H^{0,2}(X,\ZZ).
\end{split}
\]
This gives an integral weight-two Hodge structure on $H^*(X,\ZZ)$, which is polarised by the Mukai pairing.
In the following, we will denote this polarised Hodge structure by $\tilde H(X,\ZZ)$, which is called the \emph{Mukai lattice}.

\begin{theorem}[\cite{Mukai}]
\label{thm:mukaiK3}
Let $\Phi \colon \Db(X) \isomtext \Db(Y)$ be an equivalence of the derived categories of two algebraic K3 surfaces.
Then this induces a map on cohomology which defines a Hodge isometry
\[
\Phi^H \colon \tilde H(X,\ZZ) \isom \tilde H(Y,\ZZ).
\]
\end{theorem}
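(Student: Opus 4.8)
The plan is to build the cohomological map from the Fourier--Mukai kernel of $\Phi$ and then verify the two required properties: that it is an isometry for the Mukai pairing, and that it respects the Hodge decomposition. By Orlov's theorem (\autoref{thm:orlov}), the equivalence $\Phi$ is isomorphic to $\Phi_{\cP^\cpx}$ for a unique kernel $\cP^\cpx \in \Db(X \times Y)$. I would define $\Phi^H$ to be the cohomological Fourier--Mukai transform $\Phi^H_{v(\cP^\cpx)}$ with kernel the Mukai vector $v(\cP^\cpx) \in H^*(X \times Y, \QQ)$ — more precisely, one uses a suitable Todd-class correction (the square root of the Todd class of $X \times Y$) so that the Mukai vector is compatible with Grothendieck--Riemann--Roch; I would state this correction but not belabour it. The commutativity $v \circ \Phi = \Phi^H \circ v$ on the level of cohomology then follows from GRR applied to the projection $p$, exactly as the Euler-characteristic remark preceding the theorem is deduced from Hirzebruch--Riemann--Roch.

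**Next I would establish that $\Phi^H$ is an isometry for the Mukai pairing.** Since $\Phi$ is an equivalence, it admits both adjoints, which by \autoref{prop:fm-adjoints} are again Fourier--Mukai transforms with the kernels $\cP^\cpx_L$ and $\cP^\cpx_R$; and because $\Phi$ is an equivalence, $\Phi_{\cP^\cpx_L} \circ \Phi_{\cP^\cpx} \cong \id = \Phi_{\cO_\Delta}$ and likewise on the other side. Passing to cohomology via the compatibility just established, and using that composition of Fourier--Mukai transforms corresponds to composition (convolution) of cohomological kernels — which one checks by the same projection-formula and base-change manipulations used for \autoref{prop:composing-FM} — one gets that $(\Phi^H)^{-1}$ is realised by the kernel of an adjoint. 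The adjoint kernels differ from $\cP^\cpx{}^\vee$ only by pullbacks of the canonical bundles, which for K3 surfaces are trivial up to the shift $[\dim]=[2]$; tracking the signs, the cohomological transform $\Phi^H_{v(\cP^\cpx_R)}$ is, up to sign, precisely the adjoint of $\Phi^H_{v(\cP^\cpx)}$ with respect to the Mukai pairing. Combined with invertibility this forces $\Phi^H$ to preserve the Mukai pairing, i.e.\ to be an isometry $\tilde H(X,\ZZ) \to \tilde H(Y,\ZZ)$. Integrality — that $\Phi^H$ sends $\tilde H(X,\ZZ)$ into $\tilde H(Y,\ZZ)$ rather than merely the rational lattices — follows because Mukai vectors of objects in $\Db(X)$ generate $\tilde H(X,\ZZ)$ over $\ZZ$ (Chern classes of coherent sheaves are integral, and skyscrapers, line bundles and the structure sheaf already span), and $\Phi$ maps $\Db(X)$ to $\Db(Y)$.

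**Finally I would check the Hodge compatibility.** The key point is that the holomorphic symplectic form spans $\tilde H^{2,0}(X) = H^{2,0}(X)$, and this piece can be characterised representation-theoretically: $\tilde H^{2,0}(X)$ is the orthogonal complement (inside $\tilde H(X,\CC)$) of the span of all Mukai vectors $v(E^\cpx)$ with $E^\cpx \in \Db(X)$, equivalently the part of the Hodge structure not algebraic. Alternatively, and more robustly, one argues that $\Phi^H$ must send the class of $v(\cO_x)$ for a point $x$ — which is $(0,0,1) \in \tilde H^{1,1}$ — to an algebraic class, and more generally that $\Phi^H$ commutes with the action of the Serre functor's cohomological realisation, pinning down the weight filtration; since $\Phi^H$ is a Hodge-structure-compatible map on the algebraic part and an isometry, and since the $(2,0)$-part is one-dimensional and determined by orthogonality to everything algebraic, $\Phi^H$ must carry $\tilde H^{2,0}(X)$ isomorphically onto $\tilde H^{2,0}(Y)$, hence respects the whole decomposition. **I expect the main obstacle** to be precisely this last step done honestly: showing that $\Phi^H$ is a morphism of Hodge structures, not just an isometry. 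The clean way is via the spanning statement for the algebraic part together with the fact that $\Phi^H$ preserves it (because $\Phi$ preserves $\Db$ and Mukai vectors are algebraic), but making the characterisation of $\tilde H^{2,0}$ airtight — and handling the case where $X$ has large Picard rank so that the algebraic part is big — requires some care with the Hodge theory of the Mukai lattice recalled at the start of this section.
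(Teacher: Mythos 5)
Your first step (Orlov's theorem plus the cohomological transform with kernel $v(\cP^\cpx)$, Todd-corrected, and GRR for the compatibility $v(\Phi(E^\cpx)) = \Phi^H(v(E^\cpx))$) matches the paper, and your isometry argument via the adjoint kernels is the standard one. But there is a genuine gap in both your integrality step and your Hodge-compatibility step, and it is the same gap: you never invoke the one fact the paper flags as the key ingredient, namely Mukai's theorem that $v(\cP^\cpx)$ lies in $\tilde H^{1,1}(X \times Y, \ZZ)$, i.e.\ is an \emph{integral algebraic} class on the product. Your substitutes do not work. Mukai vectors of objects of $\Db(X)$ do \emph{not} generate $\tilde H(X,\ZZ)$ over $\ZZ$: since $c_1$ of a coherent sheaf is algebraic, they span only $\tilde H^{1,1}(X,\ZZ) = H^0(X,\ZZ)\oplus \mathrm{NS}(X) \oplus H^4(X,\ZZ)$, a sublattice of rank $\rho(X)+2 \le 22$ inside the rank-$24$ Mukai lattice. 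So integrality of $\Phi^H$ on the span of Mukai vectors says nothing about the transcendental directions; one really needs integrality of the kernel class $v(\cP^\cpx)$ itself, after which $\beta \mapsto p_*\bigl(v(\cP^\cpx).q^*\beta\bigr)$ visibly preserves integral classes.

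For the same reason your characterisation of $\tilde H^{2,0}(X)$ fails: the orthogonal complement of the algebraic part is the (complexified) transcendental lattice, which has rank $22-\rho(X)$ and strictly contains $H^{2,0}\oplus H^{0,2}$ whenever $\rho(X) < 20$ --- the problematic case is \emph{low} Picard rank, the opposite of the one you worried about. So ``orthogonal to everything algebraic'' does not pin down the $(2,0)$-line, and preserving the algebraic part plus the pairing does not force $\Phi^H$ to respect the decomposition. The correct argument is again via the kernel: once $v(\cP^\cpx)$ is of type $(1,1)$ on $X\times Y$, a class $\alpha \in H^{2,0}(X)$ gives $q^*\alpha . v(\cP^\cpx)$ with components of type $(p+2,p)$, and $p_*$ shifts type by $(-2,-2)$, so the $H^2(Y)$-component of $\Phi^H(\alpha)$ is of type $(2,0)$; hence $\Phi^H$ is a morphism of Hodge structures. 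In short: replace both of your workarounds by the single statement $v(\cP^\cpx)\in \tilde H^{1,1}(X\times Y,\ZZ)$ and the proof closes.
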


\begin{proof}
By \autoref{thm:orlov}, $\Phi$ can be written uniquely as a Fourier-Mukai transform with Fourier-Mukai kernel $P^\cpx \in \Db(X \times Y)$.
As the key ingredient, Mukai showed that $v(P^\cpx) \in \tilde H^{1,1}(X,\ZZ)$. 
As a consequence, $\Phi^H \coloneqq \Phi^H_{v(P^\cpx)} \colon H^*(X,\QQ) \to H^*(X,\QQ)$ can be restricted to the integral part.
Finally, as an application of the Grothendieck-Riemann-Roch formula, one obtains that
\[
v(\Phi(E^\cpx)) = \Phi^H(v(E^\cpx)). \qedhere
\]
\end{proof}

\begin{remark}
If $\Phi \colon \Db(X) \to \Db(Y)$ is an equivalence between derived categories of arbitrary smooth projective varieties,
then there is a natural pairing on cohomology such that the induced
$\Phi^H\colon H^*(X,\QQ) \to H^*(X,\QQ)$ is an isometry, see \cite[\S 5.2]{Huybrechts}.
Note that in the general situation, $\Phi^H$ will not restrict to the integral part.
\end{remark}

\begin{corollary}
Let $X$ be an algebraic K3 surface. Then there is a homomorphism of groups
\[
\varpi \colon \Aut(\Db(X)) \to \Isom(\tilde H(X,\ZZ)),\ \Phi \mapsto \Phi^H,
\]
where $\Aut(\Db(X))$ is the group of auto-equivalences of $\Db(X)$ and $\Isom(\tilde H(X,\ZZ))$ denotes the group of Hodge isometries of the Mukai lattice.
\end{corollary}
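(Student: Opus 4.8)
The plan is to read the statement off from \autoref{thm:mukaiK3}, combined with the compatibility of the (cohomological) Fourier--Mukai formalism with composition of functors.

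First I would check that $\varpi$ is well-defined, \ie that $\varpi(\Phi)$ really is a Hodge isometry of $\tilde H(X,\ZZ)$. An auto-equivalence $\Phi \in \Aut(\Db(X))$ is in particular an exact, fully faithful functor $\Db(X) \to \Db(X)$, so by Orlov's theorem (\autoref{thm:orlov}) it is isomorphic to a Fourier--Mukai transform $\Phi_{P^\cpx}$ with kernel $P^\cpx \in \Db(X\times X)$ that is unique up to isomorphism. Since the Mukai vector $v(\blank)$ is invariant under isomorphism, the induced cohomological map $\Phi^H = \Phi^H_{v(P^\cpx)}$ depends only on $\Phi$ and not on the chosen kernel, and by \autoref{thm:mukaiK3} applied with $Y=X$ it is a Hodge isometry of the Mukai lattice; hence $\varpi(\Phi) \in \Isom(\tilde H(X,\ZZ))$. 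The identity functor is $\Phi_{\cO_\Delta}$, and a direct computation of the cohomological transform with kernel $v(\cO_\Delta)$ (convolution with the diagonal is the identity on Chern characters, and the Todd-class corrections cancel) gives $\varpi(\id_{\Db(X)}) = \id$.

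The main point is that $\varpi$ is multiplicative. If $\Phi \cong \Phi_{P^\cpx}$ and $\Psi \cong \Phi_{Q^\cpx}$, then by \autoref{prop:composing-FM} the composite $\Phi \circ \Psi$ is again of Fourier--Mukai type, with kernel the convolution $R^\cpx = \R p_{13*}(p_{12}^* Q^\cpx \Ltensor p_{23}^* P^\cpx) \in \Db(X\times X)$, where the $p_{ij}$ denote the projections from $X \times X \times X$. I would then invoke the Grothendieck--Riemann--Roch compatibility of $v(\blank)$ with this convolution: writing $v(\blank) = \ch(\blank)\cdot\sqrt{\td(X)}$ and using that for a K3 surface $\td(X) = (1,0,2)$, hence $\sqrt{\td(X)} = (1,0,1)$, the square-root Todd factors attached to the middle copy of $X$ inside $X\times X\times X$ combine so that $\Phi^H_{v(R^\cpx)} = \Phi^H_{v(P^\cpx)} \circ \Phi^H_{v(Q^\cpx)}$ as maps $H^*(X,\QQ) \to H^*(X,\QQ)$; this convolution identity for cohomological Fourier--Mukai transforms is worked out in \cite[\S 5.2]{Huybrechts}. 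Restricting to the integral Mukai lattice, which all three of these transforms preserve by the first step, gives $\varpi(\Phi\circ\Psi) = \varpi(\Phi)\circ\varpi(\Psi)$, so $\varpi$ is a homomorphism of groups.

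The hard part is precisely this multiplicativity step. One cannot shortcut it by checking only the identity $v(\Phi(E^\cpx)) = \Phi^H(v(E^\cpx))$ on objects: the Mukai vectors $v(E^\cpx)$ all lie in the algebraic part $\tilde H^{1,1}(X,\QQ)$ and see nothing of the transcendental directions, so they do not pin down $\Phi^H$. One genuinely needs the convolution formula for Fourier--Mukai kernels together with the functoriality of Grothendieck--Riemann--Roch, equivalently the multiplicativity of $\ch(\blank)\cdot\sqrt{\td}$ under correspondences. Everything else --- well-definedness, the behaviour of the identity, and the passage from $H^*(X,\QQ)$ to the integral Mukai lattice --- is routine.
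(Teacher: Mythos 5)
Your proof is correct and follows exactly the route the paper intends: the paper states this corollary without proof, as an immediate consequence of \autoref{thm:mukaiK3} (well-definedness via Orlov's uniqueness of the kernel) together with the convolution formula for Fourier--Mukai kernels and the Grothendieck--Riemann--Roch compatibility of $v(\blank)=\ch(\blank)\sqrt{\td(X)}$ (multiplicativity). Your remark that multiplicativity cannot be deduced from the identity $v(\Phi(E^\cpx))=\Phi^H(v(E^\cpx))$ alone, since Mukai vectors of objects only span the algebraic part of $\tilde H(X,\ZZ)$, is a genuine and correctly identified subtlety.
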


Orlov strengthened the Mukai's result to the so-called derived Torelli Theorem.

\begin{theorem}[\cite{Orlov}]
Two algebraic K3 surfaces $X$ and $Y$ have equivalent derived categories if and only if there exists a Hodge isometry of their Mukai lattices.
\end{theorem}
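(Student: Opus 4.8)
The ``only if'' implication is exactly \autoref{thm:mukaiK3}: an equivalence $\Db(X) \isom \Db(Y)$ of algebraic K3 surfaces induces a Hodge isometry of Mukai lattices $\tilde H(X,\ZZ) \isom \tilde H(Y,\ZZ)$. So the content is the converse, and the plan is to realise $Y$ as a fine moduli space of stable sheaves on $X$; such a moduli space is derived equivalent to $X$ through a universal Fourier-Mukai kernel, and the classical Torelli theorem \autoref{thm:torelli} lets us recognise it as $Y$ itself.

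So let $\psi \colon \tilde H(X,\ZZ) \isom \tilde H(Y,\ZZ)$ be a Hodge isometry. First I would transport the class of a point: set $v \coloneqq \psi^{-1}(0,0,1) \in \tilde H^{1,1}(X,\ZZ)$. As $(0,0,1)$ is primitive and isotropic for the Mukai pairing and $\psi$ is a Hodge isometry, $v$ is a primitive isotropic class in $\tilde H^{1,1}(X,\ZZ)$. Next comes a normalisation: after composing $\psi$ with the isometry induced by a suitable auto-equivalence of $\Db(X)$ — one built from shifts, twists by line bundles and the spherical twist along $\cO_X$, all of Fourier-Mukai type by \autoref{prop:composing-FM} — one may assume that $v=(v_0,v_1,v_2)$ lies in the range covered by Mukai's construction of moduli of sheaves on a K3 surface (\eg $v_0>0$, or $v_0=0$ with $v_1$ the class of an effective divisor). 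For such a $v$ and a polarisation $H$ on $X$ generic with respect to $v$, the moduli space $M \coloneqq M_H(v)$ of $H$-stable sheaves with Mukai vector $v$ is non-empty and \emph{fine}: primitivity of $v$ and genericity of $H$ make semistability coincide with stability and provide a universal sheaf $\cP$ on $X\times M$. Since $v$ is also isotropic, $M$ is again a K3 surface, of dimension $\pairing{v,v}+2=2$, and the Fourier-Mukai transform $\Psi\colon\Db(M)\isom\Db(X)$ with kernel $\cP$ is an equivalence.

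It remains to identify $M$ with $Y$. The universal sheaf restricts on $X\times\{m\}$ to the stable sheaf $E_m$ parametrised by $m$, which has Mukai vector $v$; thus $\Psi$ sends the point sheaf $\cO_m$ on $M$ to $E_m$, and the compatibility $v(\Psi(\cG^\cpx))=\Psi^H(v(\cG^\cpx))$ obtained in the proof of \autoref{thm:mukaiK3} shows that the induced Hodge isometry $\Psi^H\colon\tilde H(M,\ZZ)\isom\tilde H(X,\ZZ)$ carries the point class $(0,0,1)$ of $M$ to $v$. Hence $\Psi^H$ restricts to a Hodge isometry $H^2(M,\ZZ)\isom v^\perp/v$, the orthogonal complement being taken inside $\tilde H(X,\ZZ)$ with its induced weight-two Hodge structure and pairing: pairing against the generator $(0,0,1)$ of $H^4(M,\ZZ)$ extracts the $H^0$-component, so that $(0,0,1)^\perp=H^2(M,\ZZ)\oplus H^4(M,\ZZ)$ and quotienting by $(0,0,1)$ leaves $H^2(M,\ZZ)$ with its intersection form. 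By the same recipe on the $Y$-side, $\psi$ restricts to a Hodge isometry $v^\perp/v\isom H^2(Y,\ZZ)$. Composing gives a Hodge isometry $H^2(M,\ZZ)\isom H^2(Y,\ZZ)$, so by \autoref{thm:torelli} there is an isomorphism $M\isomtext Y$, and therefore $\Db(X)\isom\Db(M)\isom\Db(Y)$.

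The main difficulty lies in the moduli-space step: one needs Mukai's existence and smoothness results for moduli of sheaves on K3 surfaces (with the genuinely delicate low-rank and pure-torsion cases), the fact that a primitive Mukai vector together with a $v$-generic polarisation yields a fine moduli space carrying a universal family, the theorem that the resulting moduli space is again a K3 surface, and a careful analysis of how shifts, line-bundle twists and the spherical twist along $\cO_X$ act on $\tilde H(X,\ZZ)$, so as to move $v$ into the admissible range. By contrast, the lattice-theoretic matching and the final appeal to the (weak) Torelli theorem are essentially formal once the moduli space is in hand.
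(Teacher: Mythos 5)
Your proposal is correct and follows essentially the same strategy the paper sketches: use moduli spaces of sheaves on $X$ (a fine two-dimensional moduli space $M_H(v)$ for $v=\psi^{-1}(0,0,1)$, after a normalisation by auto-equivalences) to reduce to a Hodge isometry that preserves $H^2$, and then conclude with the classical Torelli theorem, \autoref{thm:torelli}. The paper gives only a two-line outline deferring to \cite[\S 10.3]{Huybrechts}; your write-up is a faithful, more detailed expansion of that same argument, with the genuinely hard inputs (Mukai's moduli theory, fineness, the universal-kernel equivalence) correctly identified as the external ingredients.
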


\begin{proof}
The strategy of the proof is to reduce to the case that an isometry $\phi \colon \tilde H(X,\ZZ) \to \tilde H(Y,\ZZ)$ preserves $H^2$.
This reduction is built on results about moduli spaces of sheaves on K3 surfaces, see \cite[\S 10.3]{Huybrechts} for an overview.
As soon as $\phi$ preserves $H^2$, by \autoref{thm:torelli} such an isometry is of the form
\[
\phi = \pm s_{[C_1]} \circ \cdots \circ s_{[C_m]} \circ f_*.
\]
for some $f \colon X \isomtext Y$. In particular, $\Db(X) \cong \Db(Y)$.
\end{proof}

The relationship of auto-equivalences and Hodge isometries was clarified further by Hosono et.al. \cite{Hosono-etal}, Ploog \cite{Ploog} and Huybrechts, Macr\`i \& Stellari \cite{Huybrechts-Macri-Stellari}.

The central observation is that the Mukai lattice $\tilde H(X,\ZZ)$ has signature $(4,20)$ and that there is a \emph{natural orientation} of the positive directions.
Given an ample class $\alpha \in H^{1,1}(X)$ and a generator $\sigma \in H^{2,0}(X)$, the four classes
\[
\Re(\exp(i \alpha)) = 1-\alpha^2/2,\ 
\Im(\exp(i \alpha)) = \alpha,\ 
\Re(\sigma),\ 
\Im(\sigma)
\]
define an orientation which is independent of the choices of $\alpha$ and $\sigma$.
We denote by $\Isom^+(\tilde H(X,\ZZ))$ the Hodge isometries which preserve this orientation.

\begin{proposition}
\label{prop:isom-plus}
Let $X$ be an algebraic K3 surface and $\Phi \in \Aut(\Db(X))$.
Then $\Phi^H$ preserves the natural orientation, \ie $\Phi^H \in \Isom^+(\tilde H(X,\ZZ)).$
Conversely, for any $\psi \in \Isom(\tilde H(X,\ZZ))$ there is a $\Psi \in \Aut(\Db(X))$ with
\[
\Psi^H = \psi \circ (\pm\id_{H^2}),
\]
in particular, $\Isom^+(\tilde H(X,\ZZ)) \subset \Isom(\tilde H(X,\ZZ)$ has index two.
\end{proposition}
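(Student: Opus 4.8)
The plan is to prove the two halves separately. For the first half (that $\Phi^H$ preserves orientation for every auto-equivalence $\Phi$), the strategy is to reduce to a short list of generating auto-equivalences and check orientation-preservation on each. By \autoref{thm:orlov}, every $\Phi \in \Aut(\Db(X))$ is a Fourier-Mukai transform, and by the general composition results one knows that $\Aut(\Db(X))$ is generated — up to shifts, which act trivially on $\tilde H$ up to a global sign on all of $\tilde H$ and hence preserve the orientation of the positive-definite part — by the following: automorphisms $f_*$ for $f \in \Aut(X)$; tensoring by line bundles $L$ (equivalently, the $B$-field shifts $\beta \mapsto \beta . \exp(c_1(L))$); and the spherical twists $T_{\cO_X}$ along the structure sheaf (or, equivalently, the shift-by-$[\pm 2]$ together with a twist that realizes, cohomologically, the ``shift'' $\begin{pmatrix} 0 & 0 & -1 \\ 0 & 1 & 0 \\ -1 & 0 & 0\end{pmatrix}$-type operation on $H^0 \oplus H^4$). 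I would then verify directly, by writing each of these in a convenient basis of $\tilde H(X,\RR)$ adapted to the splitting $(H^0 \oplus H^4) \perp H^2$, that it acts by $+1$ on the determinant of the restriction to the positive-definite $4$-plane spanned by $\Re(\exp(i\alpha)), \Im(\exp(i\alpha)), \Re(\sigma), \Im(\sigma)$: for $f_*$ this is because $f$ is an isomorphism and acts compatibly on $\exp(i\alpha)$ and $\sigma$ up to scalars of positive modulus; for $B$-field shifts this is because $\exp(c_1(L))$ acts by a unipotent transformation, which has determinant $1$ on any invariant subspace; and for the $\cO_X$-twist one computes the $2\times 2$ block on $H^0 \oplus H^4$ explicitly and checks the sign.

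For the converse half, the key input is the derived Torelli theorem (Orlov) together with \autoref{thm:mukaiK3}: given $\psi \in \Isom(\tilde H(X,\ZZ))$, I first modify it by composing with a sign on $H^2$, $\psi \mapsto \psi \circ (\pm \id_{H^2})$, to arrange that the resulting isometry $\psi'$ preserves the natural orientation of the positive $4$-plane — this is possible precisely because composing with $-\id_{H^2}$ reverses the orientation of the two positive directions coming from $H^{2,0}$ while fixing those coming from $H^0 \oplus H^4$, hence flips the overall orientation, so exactly one of $\psi, \psi \circ (-\id_{H^2})$ lies in $\Isom^+$. Then $\psi' \in \Isom^+(\tilde H(X,\ZZ))$, and the strong form of the derived global Torelli theorem (which by the first half one knows is the correct target) produces a $\Psi \in \Aut(\Db(X))$ with $\Psi^H = \psi'$; unwinding gives $\Psi^H = \psi \circ (\pm \id_{H^2})$ as claimed. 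Finally, the index-two statement is a formal consequence: $\Isom^+$ is the kernel of the orientation character $\Isom(\tilde H(X,\ZZ)) \to \{\pm 1\}$, and this character is surjective since $\pm\id_{H^2}$ (extended by the identity on $H^0 \oplus H^4$) is a Hodge isometry mapping to $-1$.

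The main obstacle is the first half: making precise the claim that orientation-preservation can be reduced to a finite generating set, and carrying out the sign computation for the ``spherical twist'' generator $T_{\cO_X}$. One has to be careful that shifts act on $\tilde H$ by $-\id$ on all of $\tilde H$ (not just on $H^2$), so that $[1]$ preserves the orientation of a $4$-dimensional positive-definite subspace (since $(-1)^4 = 1$); and one must identify the cohomological action of $T_{\cO_X}$ correctly — it is the reflection in the $(-2)$-class $v(\cO_X) = (1,0,1) \in \tilde H^{1,1}(X,\ZZ)$, whose associated reflection on $H^0 \oplus H^4$ is readily seen to have the right sign on the positive part. A secondary subtlety is to justify that these elements really do generate $\Aut(\Db(X))$ in the relevant sense, or, alternatively, to avoid a generation statement altogether by arguing continuity/connectedness: the orientation is locally constant in families and $\Aut(\Db(X))$ acts through a group of isometries whose orientation-character is determined on a Zariski-dense or topologically-dense subset. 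I would present the generator-by-generator computation as the cleaner route and relegate the generation input to a citation of the composition results quoted earlier and of \cite{Huybrechts-Macri-Stellari}.
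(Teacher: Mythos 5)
The paper itself offers no proof of this proposition; it is quoted from the literature (\cite{Hosono-etal}, \cite{Ploog}, \cite{Huybrechts-Macri-Stellari}), so the relevant comparison is with the arguments given there. Measured against those, your plan has two genuine gaps. For the first half, the reduction to a generating set of auto-equivalences is not available: it is not known that $\Aut(\Db(X))$ is generated by $f_*$, line-bundle twists, shifts and the twist $\T_{\cO_X}$ --- determining generators of $\Aut(\Db(X))$ is precisely the hard open problem, and even in Picard rank one the answer (\cite{Bayer-Bridgeland}, quoted at the end of the paper) requires the squares of \emph{all} spherical twists. This is exactly why orientation-preservation is a difficult theorem: the proof in \cite{Huybrechts-Macri-Stellari} does not proceed generator by generator but by deforming the Fourier--Mukai kernel to generic (non-projective) K3 surfaces, where the image of the equivalences in the isometry group can be controlled. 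Your fallback ``continuity/density'' remark gestures at this but is not a proof.

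For the converse, your argument is circular: the ``strong form of the derived global Torelli theorem'' that produces $\Psi$ with $\Psi^H=\psi'$ for every $\psi'\in\Isom^+(\tilde H(X,\ZZ))$ is (together with the first half) precisely the statement being proved; \autoref{thm:orlov} and Orlov's derived Torelli theorem as stated only give the existence of \emph{some} equivalence, not one inducing a prescribed isometry. The actual route of \cite{Hosono-etal} and \cite{Ploog} is lattice-theoretic: one shows that $\Isom(\tilde H(X,\ZZ))$ is generated, up to $\pm\id_{H^2}$, by isometries already known to lift --- those preserving the grading (lifted via \autoref{thm:torelli} and moduli-of-sheaves arguments), reflections in $(-2)$-classes of $\tilde H^{1,1}$ (lifted by spherical twists), $-\id$ (the shift), and the unipotent multiplication by $\exp(c_1(L))$ (line-bundle twists). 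Finally, a small but self-undermining slip: $-\id_{H^2}$ flips three of the four positive directions ($\alpha$, $\Re\sigma$, $\Im\sigma$), not two; with your count of two the determinant on the positive $4$-plane would be $+1$ and the map would \emph{preserve} orientation. The correct count gives $-1$, so the conclusion survives, but as written the justification contradicts itself.
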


\subsection*{Spherical twists}

As a corollary of \autoref{prop:isom-plus} we obtain the following short exact sequence for an algebraic K3 surface $X$:
\[
0 \to \ker(\varpi) \to \Aut(\Db(X)) \xto{\varpi} \Isom^+(\tilde H(X,\ZZ)) \to 0.
\]
One may wonder which elements lie in the kernel of $\varpi$, \ie auto-equivalences that act as the identity on cohomology.
Or one might ask whether the reflections $s_{[C]}$ for smooth rational curves $C \subset X$ can be lifted to auto-equivalences of $\Db(X)$.
Both questions lead to the notion of a spherical twist.
We recall the central properties, for further details see \cite[\S 8.1]{Huybrechts}.

\begin{definition}
Let $X$ be a smooth projective variety.
An object $E^\cpx \in \Db(X)$ is called \emph{spherical} if
\begin{itemize}
\item $E^\cpx$ is a \emph{spherelike} object, \ie 
\[
\Hom^*(E^\cpx,E^\cpx) \coloneqq \bigoplus_k \Hom(E^\cpx,E^\cpx[k])[-k] \cong \CC[t]/t^2;
\]
\item and $E^\cpx$ is a \emph{Calabi-Yau} object, \ie $E^\cpx \otimes \omega_X \cong E^\cpx$.
\end{itemize}
\end{definition}

\begin{remark}
The graded vector space $\Hom^*(E^\cpx,E^\cpx)$ is the cohomology of the complex $\R\Hom(E^\cpx,E^\cpx) \in \Db(\CC\hmod)$, 
and actually quasi-isomorphic to it. 
Note that $\Hom^*(E^\cpx,E^\cpx)$ becomes a $\CC$-algebra with the Yoneda product.
So the first property asks that there is an (up to scalar) unique self-extension of $E^\cpx$ that squares to zero. 
By the second property this extension has to be of degree $\dim(X)$.
\end{remark}

\begin{theorem}
Let $E^\cpx$ be a spherical object in $\Db(X)$.
Then there is an auto-equivalence $\T_{E^\cpx}$ of $\Db(X)$ which fits into an exact triangle of functors:
\[
\Hom^*(E^\cpx,\blank) \otimes E^\cpx \xto{\mathsf{ev}} \id \to \T_{E^\cpx} \to \Hom^*(E^\cpx,\blank) \otimes E^\cpx [1],
\]
which is called the \emph{spherical twist} along $E^\cpx$.
\end{theorem}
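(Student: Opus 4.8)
The plan is to realise $\T_{E^\cpx}$ as a Fourier--Mukai transform whose kernel makes both the defining triangle and the equivalence property visible. Write $E$ for $E^\cpx$, set $n=\dim(X)$, and let $q,p\colon X\times X\to X$ be the two projections. Consider
\[
\cP_E \coloneqq \Cone\bigl(q^*E^\vee \Ltensor p^*E \xto{\ \mathsf{tr}\ } \cO_\Delta\bigr)\in\Db(X\times X),
\]
where $\mathsf{tr}$ is the morphism restricting along the diagonal $\iota\colon X\to X\times X$ to the trace $\R\SHom(E,E)\to\cO_X$, and set $\T_E\coloneqq\Phi_{\cP_E}\colon\Db(X)\to\Db(X)$. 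Using the projection formula and flat base change one computes $\Phi_{q^*E^\vee\Ltensor p^*E}(F)\cong\R\Hom(E,F)\otimes_\kk E\cong\Hom^*(E,F)\otimes E$, while $\Phi_{\cO_\Delta}=\id$. Since a triangle of kernels induces a triangle of Fourier--Mukai transforms, applying $\Phi_\blank$ to the defining triangle of $\cP_E$ produces the asserted exact triangle of functors; one still has to check that $\Phi_\blank(\mathsf{tr})$ is the evaluation $\mathsf{ev}$, which falls out of unwinding the identifications above.

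It remains to prove that $\T_E$ is an auto-equivalence. Being of Fourier--Mukai type it has a left and a right adjoint (again of Fourier--Mukai type). From the functorial triangle one reads off its effect on two classes of objects. If $F$ lies in $E^\perp\coloneqq\{F\in\Db(X):\Hom^*(E,F)=0\}$, then the first term of the triangle vanishes, so $\T_E(F)\cong F$; in fact the natural transformation $\id\to\T_E$ is an isomorphism on all of $E^\perp$. For $F=E$ the spherelike condition gives $\Hom^*(E,E)\cong\CC\oplus\CC[-n]$ (the Calabi--Yau property forcing the self-extension into degree $n$), hence $\Hom^*(E,E)\otimes E\cong E\oplus E[-n]$, and since the evaluation is the identity on the summand $E$, the cone of this split epimorphism is $\T_E(E)\cong E[1-n]$.

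Next I would exhibit a spanning class on which $\T_E$ is visibly fully faithful. Here the Calabi--Yau hypothesis is used a second time: by Serre duality together with $E\otimes\omega_X\cong E$ one gets $\Hom^*(E,F)=0\iff\Hom^*(F,E)=0$, i.e.\ $E^\perp={}^\perp E$, from which one checks that $\Omega\coloneqq\{E\}\cup E^\perp$ is a spanning class of $\Db(X)$. By the standard fully-faithfulness criterion (an exact functor between bounded derived categories of smooth projective varieties admitting both adjoints is fully faithful as soon as it is so on a spanning class), it suffices to verify that $\T_E$ induces isomorphisms $\Hom^*(\omega_1,\omega_2)\xto{\ \sim\ }\Hom^*(\T_E\omega_1,\T_E\omega_2)$ for all $\omega_1,\omega_2\in\Omega$. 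For a mixed pair (one of $\omega_1,\omega_2$ equal to $E$ and the other in $E^\perp$) both sides vanish --- when $\omega_1\in E^\perp$, $\omega_2=E$, precisely because $E^\perp={}^\perp E$; and for $\omega_1,\omega_2\in E^\perp$ the claim is immediate from the natural isomorphism $\id\cong\T_E$ on $E^\perp$. The remaining, and main, obstacle is the diagonal case $\omega_1=\omega_2=E$: one must show that the unital ring endomorphism of $\Hom^*(E,E)\cong\CC[t]/t^2$ induced by $\T_E$ does not annihilate the generator $t$ (equivalently, is an isomorphism), which is verified by tracing $t$ through the defining triangle and invoking the spherelike relation.

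Finally, to pass from fully faithful to essentially surjective: the right adjoint $\T_E^R$ is structurally an ``inverse twist'', fitting into a triangle of functors of the same shape --- one computes the adjoint kernel $\cP_E^\vee\otimes q^*\omega_X[n]$, and the Calabi--Yau property again collapses the outcome to a cone over $\cO_\Delta$ --- so the same spanning-class argument shows $\T_E^R$ is fully faithful, hence conservative: $\T_E^R D=0$ forces $0=\Hom^*(\T_E^R D,\T_E^R D)\cong\Hom^*(D,D)$ and thus $D=0$. Since $X$ is connected and the essential image of $\T_E$ is a right-admissible triangulated subcategory whose right orthogonal is $\ker\T_E^R=0$, that image is all of $\Db(X)$, so $\T_E$ is an equivalence. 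Alternatively one may bypass this abstract step and instead compute the convolution of $\cP_E$ with the kernel of the inverse twist directly, showing it is $\cO_\Delta$.
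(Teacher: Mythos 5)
Your construction is exactly the one the paper has in mind: the theorem is stated here without proof (the text defers to Huybrechts, \S 8.1), and the remark immediately following it describes precisely your starting point, namely that $\T_{E^\cpx}$ must be \emph{defined} as the Fourier--Mukai transform whose kernel is the cone of the trace morphism $q^*E^\cpx{}^\vee \Ltensor p^*E^\cpx \to \cO_\Delta$, because the triangle of functors by itself cannot serve as a definition. Your elaboration of the equivalence statement --- computing $\T_{E^\cpx}(E^\cpx)\cong E^\cpx[1-\dim X]$ and $\T_{E^\cpx}|_{E^\cpx{}^\perp}\cong\id$, using the Calabi--Yau hypothesis and Serre duality to identify $E^\cpx{}^\perp$ with ${}^\perp E^\cpx$ so that $\{E^\cpx\}\cup E^\cpx{}^\perp$ is a spanning class, invoking the fully-faithfulness criterion on spanning classes, and deducing essential surjectivity from the vanishing of the kernel of the right adjoint --- is the standard Seidel--Thomas argument and is correct in outline. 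The one step you assert rather than carry out is also the only genuinely delicate one: the diagonal case $\omega_1=\omega_2=E^\cpx$, where one must show that the induced unital endomorphism of $\Hom^*(E^\cpx,E^\cpx)\cong\CC[t]/t^2$ does not annihilate $t$. That is where the spherelike condition does its real work and where an explicit diagram chase through the defining triangle (or through the kernels) is unavoidable; as written, your proof records that this must be checked but does not check it, so the argument is a faithful reconstruction of the cited proof rather than a complete one.
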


\begin{remark}
The first arrow in the above triangle is the \emph{evaluation map}  $\mathsf{ev}$ which comes from the adjunction of $\Hom$ and $\otimes$.
This triangle of functors cannot serve as a definition of $\T_{E^\cpx}$, as cones are not functorial. But $\mathsf{ev}$ induces a morphism between the respective Fourier-Mukai kernels, which allows to define $\T_{E^\cpx}$ as a Fourier-Mukai transform to the cone of this morphism.
\end{remark}

The triangle of functors above allows to deduce easily two important properties of a spherical twist $\T_{E^\cpx}$:
\begin{itemize}
\item $\T_{E^\cpx}(E^\cpx) = E^\cpx[1-\dim(X)]$ and
\item $\T_{E^\cpx}(F^\cpx) = F^\cpx$ for $F^\cpx$ with $\Hom^*(E^\cpx,F^\cpx)=0$.
\end{itemize}
On the level of cohomology, these properties become
\begin{itemize}
\item $\T_{E^\cpx}^H(v(E^\cpx)) = (-1)^{1-\dim(X)}v(E^\cpx)$ and
\item $\T_{E^\cpx}^H(\alpha) = \alpha$ for $\alpha$ with $\pairing{v(E^\cpx),\alpha}=0$.
\end{itemize}
In particular, $\T_{E^\cpx}^H$ is already completely determined: it is the reflection along $v(E^\cpx)^\perp$ if $\dim(X)$ is even and the identity if $\dim(X)$ is odd.

\begin{corollary}
For an algebraic K3 surface $X$ holds:
\begin{itemize}
\item If $E^\cpx$ is a spherical object, then $\T_{E^\cpx}^2$ is a non-trivial element of $\ker(\varpi)$.
\item If $C \subset X$ is a smooth rational curve, then $\cO_C(-1)$ is a spherical object with $\T_{\cO_C(-1)}^H = s_{[C]}$.
\end{itemize}
\end{corollary}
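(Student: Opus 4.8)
The plan is to treat the two bullets in turn, using throughout that $\dim X = 2$ is even, so that --- by the discussion just before the corollary --- $\T_{E^\cpx}^H$ is the reflection of $\tilde H(X,\ZZ)$ along $v(E^\cpx)^\perp$.

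For the first bullet: since $\varpi$ is a group homomorphism, $\varpi(\T_{E^\cpx}^2) = (\T_{E^\cpx}^H)^2 = \id$ because a reflection is an involution; hence $\T_{E^\cpx}^2 \in \ker(\varpi)$. To see that this element is non-trivial, I would use the identity $\T_{E^\cpx}(E^\cpx) = E^\cpx[1-\dim X] = E^\cpx[-1]$ recorded above, so that $\T_{E^\cpx}^2(E^\cpx) = E^\cpx[-2]$. A spherical object is non-zero (its graded endomorphism algebra $\CC[t]/t^2$ is), and a non-zero object of $\Db(X)$ is never isomorphic to a non-trivial shift of itself (compare top non-vanishing cohomology sheaves); hence $\T_{E^\cpx}^2 \neq \id$.

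For the second bullet I first check that $\cO_C(-1)$ is spherical. The Calabi--Yau condition is automatic: $\omega_X \cong \cO_X$ on a K3 surface, so $\cO_C(-1) \otimes \omega_X \cong \cO_C(-1)$. For the spherelike condition I would compute $\Hom^*(\cO_C(-1),\cO_C(-1))$ degree by degree. Being a line bundle on the integral projective curve $C \cong \PP^1$, it satisfies $\Hom_X(\cO_C(-1),\cO_C(-1)) = H^0(C,\cO_C) = \CC$; since the Serre functor of $\Db(X)$ is $\blank \otimes \omega_X[2] \cong \blank[2]$, Serre duality gives $\Ext^2_X(\cO_C(-1),\cO_C(-1)) \cong \Hom_X(\cO_C(-1),\cO_C(-1))^\vee = \CC$; using $\R\Hom = \R\Gamma \circ \R\SHom$ (\autoref{prop:local-global}) and the fact that $\R\SHom(\cO_C(-1),\cO_C(-1))$ has cohomology only in degrees $0$ and $1$ (because $C$ is a Cartier divisor) and is supported on the curve $C$, one gets $\Ext^i_X(\cO_C(-1),\cO_C(-1)) = 0$ for $i \geq 3$; finally, a Riemann--Roch computation gives the Mukai vector $v(\cO_C(-1)) = (0,[C],0)$, whence $\chi(\cO_C(-1),\cO_C(-1)) = -\pairing{v,v} = -[C]^2 = 2$, which together with the above forces $\Ext^1_X(\cO_C(-1),\cO_C(-1)) = 0$. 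Thus $\Hom^*(\cO_C(-1),\cO_C(-1)) \cong \CC \oplus \CC[-2]$, \ie the algebra $\CC[t]/t^2$ with $t$ in degree $2 = \dim X$ (the product $\Ext^2_X \times \Ext^2_X \to \Ext^4_X = 0$ being forced to vanish); so $\cO_C(-1)$ is spherical.

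It remains to identify $\T_{\cO_C(-1)}^H$, which by the quoted description is the reflection of $\tilde H(X,\ZZ)$ along $(0,[C],0)^\perp$. Since $[C]^2 = -2$, this reflection is $x \mapsto x + \pairing{x,(0,[C],0)}\,(0,[C],0)$; it fixes $H^0(X,\ZZ)$ and $H^4(X,\ZZ)$, which pair to zero with $(0,[C],0)$, and on $H^2(X,\ZZ)$ it is $\alpha \mapsto \alpha + (\alpha.[C])[C]$, which is precisely $s_{[C]}$; hence $\T_{\cO_C(-1)}^H = s_{[C]}$. The only real work here is the $\Ext$-computation of the previous paragraph; its ingredients --- simplicity of a line bundle on an integral curve, Serre duality, the two-term resolution of a divisor, and Hirzebruch--Riemann--Roch for the Mukai vector --- are all standard but must be assembled with care. (Equivalently, one may transport the computation to $\PP^1$ along $j \colon C \hookrightarrow X$ via $j_* \dashv j^!$, using $j^!(\blank) \cong \L j^*(\blank) \otimes \omega_{C/X}[-1]$, $\omega_{C/X} \cong \cO_C(-2)$ and $\cN_{C/X} \cong \cO_C(-2)$, landing on the same cohomology groups of line bundles on $\PP^1$.)
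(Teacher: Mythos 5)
Your proof is correct and follows essentially the same route as the paper, which merely sketches the argument (``$\T_{E^\cpx}^H$ is a reflection'' for the first bullet; ``one can check that all $\cO_C(k)$ are spherical and only $k=-1$ gives $v=[C]$'' for the second). You supply the details the paper omits --- in particular the non-triviality of $\T_{E^\cpx}^2$ via $\T_{E^\cpx}^2(E^\cpx)\cong E^\cpx[-2]$ and the full $\Ext$-computation for $\cO_C(-1)$ --- and these are all sound.
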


\begin{proof}
The first part follows from the observation that $\T_{E^\cpx}^H$ is a reflection. 
For the second part, one can check that all $\cO_C(k)$ are spherical objects for $k \in \ZZ$. But only for $k=-1$, one obtains that $v(\cO_C(-1)) = [C]$.
\end{proof}

So in the presence of smooth rational curves on an algebraic K3 surface, we obtain elements in $\ker(\varpi)$. 
The question about the structure of $\ker(\varpi)$ in general is hard, so far only the case of Picard rank $1$ is solved.

\begin{theorem}[\cite{Bayer-Bridgeland}]
Let $X$ be an algebraic K3 surface of Picard rank $1$.
Then $\ker(\varpi)$ is the product of $\ZZ \cdot [2]$ and the free group generated by $\T_V^2$ with $V$ running over all spherical vector bundles on $X$.
\end{theorem}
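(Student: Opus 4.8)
\emph{The plan is to translate the computation of $\ker(\varpi)$ into a statement about the topology of Bridgeland's space of stability conditions on $\Db(X)$.} Write $\mathcal{N}(X) = \tilde H^{1,1}(X,\ZZ)$ for the algebraic part of the Mukai lattice; when $X$ has Picard rank one it has rank three and signature $(2,1)$. Inside the period domain $\mathcal{P}^+(X) = \{\Omega \in \mathcal{N}(X)\otimes\CC : \Re\Omega,\,\Im\Omega \text{ span an oriented positive plane}\}$ consider the open subset $\mathcal{P}_0^+(X)$ obtained by deleting the locally finite union of the subspaces $\delta^\perp$ as $\delta$ ranges over the spherical classes ($\delta^2=-2$) in $\mathcal{N}(X)$. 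I would take as input Bridgeland's theorem that there is a distinguished connected component $\mathrm{Stab}^\dagger(X)$ of the space of stability conditions together with a covering map $\mathcal{Z}\colon \mathrm{Stab}^\dagger(X)\to\mathcal{P}_0^+(X)$, equivariant for the action of the subgroup of $\Aut(\Db X)$ preserving $\mathrm{Stab}^\dagger(X)$ (acting on $\mathcal{P}_0^+(X)$ through $\Phi\mapsto\Phi^H|_{\mathcal{N}(X)}$) and for the $\CC$-action in which $2\in\CC$ acts as the shift $[2]$.

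The heart of the matter, and what I expect to be the main obstacle, is to prove that $\mathrm{Stab}^\dagger(X)$ is simply connected — indeed contractible — when $\rho(X)=1$. This is Bridgeland's conjecture in this case; essentially all the genuine work lies here and it cannot be reduced to formal arguments. Granting it, $\mathcal{Z}$ is the universal cover of $\mathcal{P}_0^+(X)$, hence regular, and its deck group is canonically $\pi_1(\mathcal{P}_0^+(X))$. A deck transformation of $\mathcal{Z}$ is exactly an autoequivalence that preserves $\mathrm{Stab}^\dagger(X)$ and acts trivially on $\mathcal{N}(X)$; call this group $\Aut^\dagger_0(X)$. I would then verify $\ker(\varpi)=\Aut^\dagger_0(X)$: if $\Phi^H=\id$ on the whole Mukai lattice then a fortiori $\Phi$ is trivial on $\mathcal{N}(X)$, and one must check that this forces $\Phi$ to preserve the distinguished component; conversely the generators produced below act on $\tilde H(X,\ZZ)$ by an isometry that is trivial on the transcendental part and a product of an even number of reflections on $\mathcal{N}(X)$, hence act as the identity, so $\Aut^\dagger_0(X)\subseteq\ker(\varpi)$.

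It then remains to compute $\pi_1(\mathcal{P}_0^+(X))$ and match generators with autoequivalences. Since $\mathcal{N}(X)$ has signature $(2,1)$, sending a positive plane to its negative-definite orthogonal line exhibits $\mathcal{P}^+(X)$ as a bundle over the hyperbolic plane $\mathbb{H}$ of negative lines, with fibre $GL^+(2,\mathbb{R})$ (homotopy equivalent to $S^1$); each wall $\delta^\perp\cap\mathcal{P}^+(X)$ is precisely the fibre over the single point $p_\delta\in\mathbb{H}$ spanned by $\mathbb{R}\delta$, and the $p_\delta$ form a locally finite subset. Thus $\mathcal{P}_0^+(X)$ fibres over $\mathbb{H}$ minus a locally finite set of punctures (a space homotopy equivalent to a wedge of circles, one per wall) with fibre $\simeq S^1$, and since the structure group $GL^+(2,\mathbb{R})$ is connected the fibration is homotopically trivial over this one-complex. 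I obtain
\[
\pi_1(\mathcal{P}_0^+(X)) \;\cong\; \ZZ \;\times\; F,
\]
where the central $\ZZ$-factor is generated by a loop in the $GL^+(2,\mathbb{R})$-fibre and $F$ is free on loops around the punctures.

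Finally I would trace these generators through the covering. A loop in the fibre direction lifts to the element of the $\CC$-action that acts trivially on $\mathcal{P}^+(X)$, namely the shift $[2]$ (and $[2]$ is primitive in $\ZZ\times F$, so it generates this factor). For the puncture $p_\delta$: choosing a spherical object $E$ with $v(E)=\delta$, the twist $\T_E$ covers the reflection $s_\delta$, which on $\mathbb{H}$ is rotation by $\pi$ about $p_\delta$; concatenating a short path from the basepoint to $s_\delta$ of the basepoint with its $s_\delta$-image produces a small loop around $p_\delta$, so the corresponding deck transformation is $\T_E^2$. For $\rho(X)=1$ every spherical class is, up to sign, the Mukai vector of a unique $\mu$-stable spherical vector bundle, and twisting by the ample generator permutes these classes without altering the collection of squared twists; hence $F$ is free on the elements $\T_V^2$ with $V$ ranging over spherical vector bundles. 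Since $[2]$ is a shift it commutes with every $\T_V^2$, so the product is direct, and combining the steps gives
\[
\ker(\varpi)\;=\;\Aut^\dagger_0(X)\;=\;\pi_1(\mathcal{P}_0^+(X))\;=\;\ZZ\cdot[2]\;\times\;\langle \T_V^2\rangle_{\text{free}},
\]
which is the assertion.
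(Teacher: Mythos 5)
The paper does not prove this theorem at all: it is quoted from \cite{Bayer-Bridgeland} as a black box, so there is no internal argument to compare against. Your outline is, in fact, a faithful reconstruction of the strategy of that reference: the covering $\mathcal{Z}\colon \mathrm{Stab}^\dagger(X)\to\mathcal{P}^+_0(X)$, the fibration of $\mathcal{P}^+(X)$ over the hyperbolic plane of negative-definite lines in the rank-three lattice $\tilde H^{1,1}(X,\ZZ)$ with each wall $\delta^\perp$ sitting over a single puncture $p_\delta$, the computation $\pi_1(\mathcal{P}^+_0(X))\cong\ZZ\times F$, and the matching of the central loop with $[2]$ and of the puncture loops with squared twists $\T_V^2$ along the unique stable spherical bundle in each class (using that every spherical class has nonzero rank when $\rho(X)=1$) are all exactly the right moves.

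That said, as a \emph{proof} the proposal has a genuine gap, and you have located it yourself: everything hinges on the simple connectedness of the distinguished component $\mathrm{Stab}^\dagger(X)$, which you take as an input. That statement is not a known prior result one may cite into this argument --- it is the main theorem of \cite{Bayer-Bridgeland}, proved there by a delicate analysis of the wall-and-chamber structure and of the boundary of the component near the large-volume and ``geometric'' regions; no formal or soft argument is known to replace it. A second, smaller but still substantive, unproved step is the identification $\ker(\varpi)=\Aut^\dagger_0(X)$: one must show that any autoequivalence acting trivially on $\tilde H(X,\ZZ)$ preserves the component $\mathrm{Stab}^\dagger(X)$, which again is part of Bridgeland's conjecture and is established in \cite{Bayer-Bridgeland} only in the course of the same analysis. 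So your write-up is a correct and well-organised reduction of the theorem to its two hard ingredients, but it does not constitute an independent proof; if you intend it as such, the contractibility (or at least simple connectedness) of $\mathrm{Stab}^\dagger(X)$ and the preservation of that component must be argued, not assumed.
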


\newcommand{\bib}[5]{{\bibitem[#1]{#2} #3, {\it #4}, #5.}}
\newcommand{\arXiv}[1]{\href{http://arxiv.org/abs/#1}{\texttt{arXiv:#1}}}

\end{document}